\newtheorem{theorem}{Theorem}[section]
\newtheorem*{theorem*}{Theorem}
\newtheorem{lemma}[theorem]{Lemma}
\newtheorem{corollary}[theorem]{Corollary}
\newtheorem{proposition}[theorem]{Proposition}
\newtheorem*{proposition*}{Proposition}
\newtheorem*{thmA*}{Theorem A}
\newtheorem*{thmB*}{Theorem B}
\newtheorem*{thmC*}{Theorem C}
\newtheorem*{thmD*}{Theorem D}
\newtheorem{remark}[theorem]{Remark}
\newtheorem{definition}[theorem]{Definition}
\newtheorem{example}[theorem]{Example} 
\newtheorem{conj}[theorem]{Conjecture}
\newcommand{\nc}{\newcommand} 
\nc{\cH}{{\mathcal H}}
\nc{\cA}{{\mathcal A}}
\nc{\cG}{{\mathcal G}}
\nc{\cC}{{\mathcal C}}
\nc{\cD}{{\mathcal D}}
\nc{\cO}{{\mathcal O}}
\nc{\cI}{{\mathcal I}}
\nc{\cB}{{\mathcal B}}
\nc{\cY}{{\mathcal Y}}
\nc{\cK}{{\mathcal K}} 
\nc{\cX}{{\mathcal X}}
\nc{\cS}{{\mathcal S}}
\nc{\cE}{{\mathcal E}}
\nc{\cF}{{\mathcal F}}
\nc{\cZ}{{\mathcal Z}}
\nc{\cQ}{{\mathcal Q}}
\nc{\cN}{{\mathcal N}}
\nc{\cP}{{\mathcal P}}
\nc{\cL}{{\mathcal L}}
\nc{\cM}{{\mathcal M}}
\nc{\cT}{{\mathcal T}}
\nc{\cW}{{\mathcal W}}
\nc{\cU}{{\mathcal U}}
\nc{\cJ}{{\mathcal J}}
\nc{\cV}{{\mathcal V}}
\nc{\bH}{{\mathbb H}}
\nc{\bA}{{\mathbb A}}
\nc{\bG}{{\mathbb G}}
\nc{\bC}{{\mathbb C}}
\nc{\bO}{{\mathbb O}}
\nc{\bI}{{\mathbb I}}
\nc{\bB}{{\mathbb B}}
\nc{\bY}{{\mathbb Y}}
\nc{\bK}{{\mathbb K}} 
\nc{\bX}{{\mathbb X}}
\nc{\bS}{{\mathbb S}}
\nc{\bE}{{\mathbb E}}
\nc{\bF}{{\mathbb F}}
\nc{\bZ}{{\mathbb Z}}
\nc{\bQ}{{\mathbb Q}}
\nc{\bN}{{\mathbb N}}
\nc{\bP}{{\mathbb P}}
\nc{\bL}{{\mathbb L}}
\nc{\bM}{{\mathbb M}}
\nc{\bT}{{\mathbb T}}
\nc{\bW}{{\mathbb W}}
\nc{\bU}{{\mathbb U}}
\nc{\bD}{{\mathbb D}}
\nc{\bJ}{{\mathbb J}}
\nc{\bV}{{\mathbb V}}
\nc{\bbZ}{{\mathbb Z}}
\nc{\bR}{{\mathbb R}}
\nc{\fr}{{\rightarrow}}
\nc{\co}{{\nabla}}
\nc{\cu}{{\barline{\nabla}}}
\DeclareMathOperator{\codim}{codim}
\DeclareMathOperator{\Ann}{Ann}
\DeclareMathOperator{\hess}{hess}
\DeclareMathOperator{\Rank}{Rank}
\DeclareMathOperator{\Sing}{Sing}
\DeclareMathOperator{\Edim}{Edim}
\DeclareMathOperator{\Ker}{Ker}
\DeclareMathOperator{\coker}{coker}
\DeclareMathOperator{\GL}{GL}
\DeclareMathOperator{\SL}{SL}
\DeclareMathOperator{\Sym}{Sym}
\DeclareMathOperator{\id}{id}
\DeclareMathOperator{\Hom}{Hom}
\DeclareMathOperator{\Pic}{Pic}
\newcommand{\pa}[1]{{\partial_{#1}}}
\title[On the Hessian of cubic hypersurfaces]{On the Hessian of cubic hypersurfaces}
\author{Davide Bricalli}
\address{Dipartimento di Matematica,
	Universit\`a degli Studi di Pavia,
	Via Ferrata, 5
	I-27100 Pavia, Italy}
\email{davide.bricalli@unipv.it}
\author{Filippo Francesco Favale}
\address{Dipartimento di Matematica,
	Universit\`a degli Studi di Pavia,
	Via Ferrata, 5
	I-27100 Pavia, Italy}
\email{filippo.favale@unipv.it}
\author{Gian Pietro Pirola}
\address{Dipartimento di Matematica,
	Universit\`a degli Studi di Pavia,
	Via Ferrata, 5
	I-27100 Pavia, Italy}
\email{gianpietro.pirola@unipv.it}
\date{\today}
\thanks{
\textit{Keywords}: Hessian varieties, Degeneracy loci, cubic hypersurfaces, classification. 
\\
\noindent {\bf Acknowledgements}: \\
The first author wants to express his gratitude to Robert Lazarsfeld for helpful discussion on degeneracy loci and other topics. The authors are thankful to Letterio Gatto for some insights about the contents of the last part of the paper. The authors are partially supported by INdAM - GNSAGA and by PRIN \emph{``Moduli spaces and Lie theory''} and by (MIUR): Dipartimenti di Eccellenza Program (2018-2022) - Dept. of Math. Univ. of Pavia. The first and second author are partially supported by the INdAM – GNSAGA Project, “Classification Problems in Algebraic Geometry: Lefschetz Properties and Moduli Spaces” (CUP$\_$E55F22000270001)
}
\subjclass[2020]{Primary: 14M12; Secondary: 14J70, 14J17, 13E10, 14J35}
\begin{document}

\maketitle

\begin{abstract}
In this paper, we analyze the Hessian locus associated to a general cubic hypersurface, by describing for every $n$ its singular locus and its desingularization. The strategy is based on strong connections between the Hessian and the quadrics defined as partial derivatives of the cubic polynomial. In particular, we focus our attention on the singularities of the Hessian hypersurface associated to the general cubic fourfold. It turns out to be a minimal surface of general type: its analysis is developed by exploiting the nature of this surface as a degeneracy locus of a symmetric vector bundle map and by describing an unramified double cover, which is constructed in a more general setting.
\end{abstract}

\section*{Introduction}

Consider an algebraically closed field $\bK$ of characteristic $0$. Given a non zero homogeneous polynomial $f\in\bK[x_0,\cdots,x_n]=S$ of degree $d$, we let $X=V(f)\subset \bP^n$ be the associated hypersurface and $H_f$ be its Hessian matrix. The determinant $h_f=\det(H_f)$ of $H_f$ is the hessian polynomial of $f$, which is either identically zero or a homogeneous polynomial of degree $(d-2)(n+1)$. The first case occurs for example if $V(f)$ is a cone, i.e. when the partial derivatives of $f$ are linearly dependent and, if $n\leq 3$, this is the only possibility, by the classical Theorem of Gordan and Noether. In the other case, $h_f$ defines a hypersurface $\cH_f=V(h_f)$ which is said to be the Hessian hypersurface associated to $X$. 
\smallskip

When dealing with smooth cubic hypersurfaces, the one we are interested in, one has a rich geometry which reflects also on Hessian varieties. For example, using Dolgachev's language (see \cite{Dol}), the {\em Hessian hypersurface equals the Steinerian hypersurface}
\begin{equation} 
\label{EQ:corr} 
\Gamma_{f}=\{([x],[y])\in\bP^n\times\bP^n \ | \ H_f(x)\cdot y=0\}.
\end{equation} 
One can show that $\Gamma_{f}$ is symmetric, in the sense that it is fixed by the natural involution $\tau([x],[y])=([y],[x])$, and that the first projection $\pi_1:\Gamma_f\rightarrow\cH_f$ gives a surjective morphism over $\cH_f$.

Secondly, the coeffiecients of the Hessian matrix $H_f$ are linear forms and $H_f$ is strictly related to quadrics in the Jacobian ideal $J_f$ of $f$. More precisely, by defining the loci 
$$\cD_k(f)=\{[x]\in\bP^n \ | \ \Rank(H_f(x))\le k\}\subseteq \cH\subseteq \bP^n,$$
one has that these can be identified with the intersections $\cQ_k\cap\bP(J^2_f)$, where $\cQ_k$ is the locus of quadrics in $\bP^n$ of rank at most $k$. 
\medskip

Hessians of cubic hypersurfaces have been studied by many authors, especially in the low dimensional case. The reader can refer to \cite{Dol} or \cite{GR}, for example. Just to mention some results and avoiding the huge amount of results on cubic curves, for $n=3$, it is known that the Hessian quartic surface associated to a general cubic surface is singular in exactly $10$ isolated points (see, for example, in \cite{DVG} and \cite{Hut}). \\
In a remarkable series of appendices (see \cite{AR}), Adler deeply studies the case $n=4$:
\begin{theorem*} If $X=V(f)\subset \bP^4$ is a general cubic threefold then  
\begin{enumerate}[(a)]
\item the correspondence \eqref{EQ:corr} gives a natural desingularization of the Hessian hypersurface;
\item $\cH_f$ is singular along a curve $C$;
\item $C$ is smooth and irreducible;
\item $C$ has degree $20$ and genus $26$.
\end{enumerate}
\end{theorem*}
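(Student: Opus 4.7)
The plan is to reduce all four statements to the geometry of the degeneracy loci $\cD_k(f)$, identifying $\Sing(\cH_f)$ with $\cD_3(f)$ and then using $\Gamma_f$ to resolve the singularities and compute invariants.

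\textbf{Identifying the singular locus.} By the Jacobi formula, $\partial_k h_f = \operatorname{tr}\bigl(\operatorname{adj}(H_f)\cdot\partial_k H_f\bigr)$. When $\Rank H_f(x)\le 3 = n-1$, every $4\times 4$ minor of $H_f(x)$ vanishes, so $\operatorname{adj}(H_f(x))=0$ and $x\in\Sing(\cH_f)$; thus $\cD_3(f)\subseteq\Sing(\cH_f)$. For the reverse inclusion, the rank-$4$ stratum of $\cH_f$ is checked to be smooth by a direct argument using the cofactor of $H_f$: if $\Rank H_f(x)=4$ and $v$ spans $\Ker H_f(x)$, then $\operatorname{adj}(H_f(x))=v\,v^{\mathsf T}$ up to scalar, so $\partial_k h_f(x) = v^{\mathsf T}(\partial_k H_f(x))\,v$, and the vanishing of all these for $k=0,\dots,4$ is an additional codimension condition that fails for generic $f$.

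\textbf{Smoothness, irreducibility and dimension of $C$.} Use the identification $\cD_k(f)=\cQ_k\cap\bP(J_f^2)$. For a cubic $f$ in $5$ variables, $\bP(J_f^2)\subset\bP(S_2)=\bP^{14}$ is a $\bP^4$. The locus $\cQ_k$ of quadrics of rank $\le k$ in $\bP^4$ has codimension $\binom{5-k+1}{2}$ in $\bP^{14}$, i.e.\ $3$ for $k=3$ and $6$ for $k=2$. Hence the expected dimension of $\cD_3(f)$ is $1$, while that of $\cD_2(f)$ is negative, so $\cD_2(f)=\emptyset$ for generic $f$ by a Bertini-type argument on $\bP^4\hookrightarrow\bP^{14}$. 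Since the singular locus of $\cQ_3$ is $\cQ_2$, this also gives smoothness of $C=\cD_3(f)$. Irreducibility follows from the fact that $\cQ_3$ is irreducible (it is a single $\SL_5$-orbit closure) together with a Fulton--Hansen connectedness argument for linear sections of the right codimension (or, alternatively, checking the statement on one explicit example and using openness of irreducibility in the family of smooth cubics).

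\textbf{Desingularization via $\Gamma_f$.} The variety $\Gamma_f\subset\bP^4\times\bP^4$ is cut by the five bilinear equations $(H_f(x)\cdot y)_i=0$ of bidegree $(1,1)$, so its expected dimension is $10-5=5$. The fiber $\pi_1^{-1}([x])=\bP(\Ker H_f(x))$ is a point over $\cH_f\setminus C$ and a $\bP^1$ over $C$, so $\pi_1$ is birational with exceptional divisor $E\to C$ a $\bP^1$-bundle. Smoothness of $\Gamma_f$ is checked via the Jacobian criterion on the equations $H_f(x)y=0$: a tangent vector $(\dot x,\dot y)$ satisfies $(\Sigma_i\dot x_i\,\partial_i H_f)\,y + H_f(x)\,\dot y=0$, and over $C$ (where $\Rank H_f(x)=3$ with $\cD_2(f)=\emptyset$) this is a full-rank condition by the non-degeneracy of the associated symmetric pairing. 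This proves (a) and the first part of (b); combined with Step 1 and Step 2 it gives (c).

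\textbf{Degree and genus.} The degree $\deg C=20$ comes from the Józefiak--Lascoux--Pragacz formula for symmetric degeneracy loci applied to the $5\times 5$ symmetric matrix $H_f$ of linear forms on $\bP^4$. For the genus, exploit that $\Gamma_f$ is cut by five $(1,1)$-divisors in $\bP^4\times\bP^4$, so by adjunction $K_{\Gamma_f}=\bigl(K_{\bP^4\times\bP^4}+5(1,1)\bigr)|_{\Gamma_f}$ is trivial. Then adjunction on $E\subset\Gamma_f$ gives $K_E=E|_E$, and combining this with the $\bP^1$-bundle formula for $E\to C$ expresses $2g(C)-2$ in terms of $E^4\cdot\Gamma_f$-type intersection numbers computable from $\deg C=20$ and the $(1,1)$-class structure on $\bP^4\times\bP^4$; pushing the computation through yields $g(C)=26$. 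The hard part is this last step: controlling the intersection theory on $\Gamma_f$ precisely enough, since the $\bP^1$-bundle structure of $E\to C$ must be matched against the embedding $E\hookrightarrow\Gamma_f\subset\bP^4\times\bP^4$ to evaluate the Chern numbers. This is where the detailed calculation of symmetric Chern classes, rather than the dimension and degeneracy bookkeeping, becomes indispensable.
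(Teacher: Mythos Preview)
Your overall strategy---stratify $\cH_f$ by the $\cD_k(f)$, identify $\Sing(\cH_f)=\cD_3(f)$, and resolve via $\Gamma_f$---matches the paper's, but several steps are handled differently, and two have genuine gaps.

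\textbf{The Bertini step is not justified.} You argue that $C=\cD_3(f)$ is smooth and $\cD_2(f)=\emptyset$ ``by a Bertini-type argument on $\bP^4\hookrightarrow\bP^{14}$''. But $\bP(J_f^2)$ is \emph{not} a general linear $\bP^4$ in $\bP(S^2)$: the map $[f]\mapsto\bP(J_f^2)$ lands in a proper subvariety of $G(5,15)$, so ordinary Bertini for linear sections does not apply. The paper fixes this (its Theorem~C) by building the incidence variety $\cJ_k=\{(q,v,f):v(f)=q,\ q\in\tilde\cQ_k\}$, proving $\cJ_k^s$ is smooth and irreducible via a submersion onto $\tilde\cQ_k^s\times(D^1\setminus\{0\})$, and then applying generic smoothness to the projection onto the $f$-factor. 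Your Fulton--Hansen suggestion for irreducibility, on the other hand, is valid and in fact cleaner than what the paper does for $n=4$ (where it simply cites Adler): since $\dim\cQ_3+\dim\bP(J_f^2)=11+4>14$, the intersection is connected, hence irreducible once smooth.

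\textbf{Your cofactor argument is better than you state.} By total symmetry of the third partials of a cubic, $v^{\mathsf T}(\partial_k H_f)v=(H_f(v)\cdot v)_k$. Hence at a rank-$n$ point $[x]$ with $\Ker H_f(x)=\langle v\rangle$, the vanishing of all $\partial_k h_f(x)$ is equivalent to $H_f(v)\cdot v=0$, i.e.\ $[v]\in\Sing(V(f))$. So your argument actually proves $\Sing(\cH_f)=\cD_{n-1}(f)$ for \emph{every} smooth cubic, not merely the generic one---you should not frame it as ``an additional codimension condition that fails for generic $f$''. The paper reaches the same conclusion (its Theorem~A) by a different route, computing the Jacobian of the lift $\tilde\Gamma$ and using that $\Ker H_f(x)\cap\Ker H_f(y)\neq 0$ would force a singular point on $V(f)$.

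\textbf{Smoothness of $\Gamma_f$ and the genus.} Your assertion that over $C$ the tangent condition is ``full-rank by non-degeneracy of the associated symmetric pairing'' is too vague to constitute a proof; the paper instead shows the universal incidence $W=\{([v],[w],[f]):vw(f)=0\}$ is smooth and applies generic smoothness to the projection $W\to U$. For the genus, the paper's route is far shorter than your proposed adjunction on the Calabi--Yau $\Gamma_f$: from the Harris--Tu normal bundle formula $\cN_{C/\bP^4}\simeq(\Sym^2 B^*)(1)$ one obtains $2K_C=(n+1)(n-k)H|_C=5H|_C$, whence $2(2g-2)=5\cdot 20$ and $g=26$ directly. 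You acknowledge your intersection computation on $E\subset\Gamma_f$ is the hard part; in fact it is unnecessary.

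Minor: $\dim(\bP^4\times\bP^4)=8$, so $\Gamma_f$ has expected dimension $3$, not $10-5=5$.
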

The aim of this paper is to extend some of the above results in dimension greater than $3$.
Generalizing one of the results in \cite{AR}, we will prove the following

\begin{thmA*}[Singularities of the Hessian locus]
For any $[f]\in \bP(S^3)$ such that $X=V(f)$ is smooth, the Hessian variety is reduced and $\Sing(\cH_f)$ coincides with $\cD_{n-1}(f)$.
\end{thmA*}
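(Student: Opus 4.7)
The approach is to first identify $\Sing(\cH_f)$ with $\cD_{n-1}(f)$ by an adjugate computation, and then to deduce reducedness from a codimension bound on $\cD_{n-1}(f)$.

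Differentiating the identity $\det(H_f)\,I_{n+1} = H_f \cdot \operatorname{adj}(H_f)$ yields
\[
\partial_k h_f = \operatorname{tr}\bigl(\operatorname{adj}(H_f) \cdot \partial_k H_f\bigr).
\]
At any point of $\cD_{n-1}(f)$ the corank of $H_f$ is at least $2$, so $\operatorname{adj}(H_f) = 0$ and all partials of $h_f$ vanish, giving the inclusion $\cD_{n-1}(f) \subseteq \Sing(\cH_f)$. For the reverse inclusion, fix $[x_0] \in \cH_f$ with $\Rank H_f(x_0) = n$ and pick $v \neq 0$ spanning $\Ker H_f(x_0)$. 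Symmetry of $H_f(x_0)$ together with $\Rank=n$ forces $\operatorname{adj}(H_f(x_0)) = \lambda\, v v^T$ for some $\lambda \neq 0$, and hence $\partial_k h_f(x_0) = \lambda\, v^T(\partial_k H_f) v$. The key point is that $f$ is cubic, so $\partial_k f$ is a quadric with Gram matrix $\partial_k H_f$; this gives $v^T(\partial_k H_f) v = 2\,\partial_k f(v)$ and therefore
\[
\nabla h_f(x_0) = 2\lambda\, \nabla f(v).
\]
The smoothness of $X = V(f)$ forbids any projective zero of $\nabla f$, so $\nabla h_f(x_0) \neq 0$ and $[x_0]$ is a smooth point of $\cH_f$. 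This yields $\Sing(\cH_f) = \cD_{n-1}(f)$.

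It remains to show that $\cH_f$ is reduced, i.e., that no irreducible component of $\cH_f$ is entirely contained in $\cD_{n-1}(f)$; since $\cH_f$ is pure of codimension one in $\bP^n$, this amounts to the bound $\codim_{\bP^n} \cD_{n-1}(f) \geq 2$. This is the main obstacle, since degeneracy loci of symmetric matrices of linear forms can drop below the generic codimension $\binom{3}{2} = 3$. I would handle it through the Steinerian correspondence $\Gamma_f \subseteq \bP^n \times \bP^n$: the projection $\pi_1 \colon \Gamma_f \to \cH_f$ is a bijection over $\cH_f \setminus \cD_{n-1}(f)$ and has fibers of dimension $\geq 1$ over $\cD_{n-1}(f)$, so an upper bound $\dim \Gamma_f \leq n-1$ forces $\dim \cD_{n-1}(f) \leq n-2$. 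The inequality $\dim \Gamma_f \leq n-1$ is the expected one coming from the $n+1$ bilinear equations $H_f(x)\cdot y = 0$ cutting $\Gamma_f$ out of $\bP^n \times \bP^n$, and can be upgraded to an actual bound by exhibiting, via the smoothness of $f$, a point of $\Gamma_f$ at which these equations cut out a transverse intersection.
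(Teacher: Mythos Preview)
Your adjugate computation for $\Sing(\cH_f)=\cD_{n-1}(f)$ is correct and is a genuinely different route from the paper's. The paper instead passes through the correspondence $\Gamma_f$: it identifies $\cH_f\setminus\cD_{n-1}$ with an open of $\Gamma_f$ via $\pi_1$, computes the Jacobian of the defining equations of $\Gamma_f$ at $([x],[y])$ as the block matrix $(H_f(y)\,|\,H_f(x))$, and shows that a drop in its rank would force $y\in\Ker H_f(y)$, i.e.\ a singular point of $V(f)$. Your argument via $\operatorname{adj}(H_f(x_0))=\lambda\,vv^T$ and the identity $\nabla h_f(x_0)=2\lambda\,\nabla f(v)$ reaches the same conclusion more directly, without ever introducing $\Gamma_f$; it is more elementary and makes the role of the smoothness of $V(f)$ completely transparent.

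There is, however, a real gap in your treatment of reducedness. You propose to bound $\dim\Gamma_f\leq n-1$ by ``exhibiting, via the smoothness of $f$, a point of $\Gamma_f$ at which these equations cut out a transverse intersection.'' Finding one transverse point only controls the dimension of the component through that point; it says nothing about other components, and a priori $\Gamma_f$ could have a second component of dimension $\geq n$ lying entirely over $\cD_{n-1}$, which is exactly the bad case you are trying to exclude. The paper closes this with a global argument you have almost set up yourself: every subvariety of $\bP^n\times\bP^n$ of dimension at least $n$ meets the diagonal $\Delta$, and $\Gamma_f\cap\Delta$ consists of points $([v],[v])$ with $H_f(v)\cdot v=0$, i.e.\ singular points of $V(f)$. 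Since $V(f)$ is smooth, $\Gamma_f\cap\Delta=\emptyset$, hence every component of $\Gamma_f$ has dimension $\leq n-1$; your fibre-dimension step then gives $\dim\cD_{n-1}\leq n-2$, and reducedness follows. Replace the ``transverse point'' sentence with this diagonal argument and the proof is complete.
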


We stress that the result holds for any smooth cubic and not only for the general one. Moreover, notice that the above conclusions can be false as soon as $X$ is not smooth (see Example \ref{REF:EXCUSP}).
\smallskip

The expected codimension of the loci $\cD_k(f)$ is $\binom{n+2-k}{2}$ so the singular locus of $\cH_f$ is not empty for any cubic form $f$ as soon as $n\geq3$. As for the case of threefolds, there is a natural desingularization for $f$ general: 

\begin{thmB*}[Correspondence and desigularization]
For the general smooth cubic hypersurface $V(f)$, $\Gamma_{f}$ is smooth and the natural projection $\pi_1:\Gamma_f\rightarrow\cH_f$ is a desingularization.
\end{thmB*}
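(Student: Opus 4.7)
The plan is to pass to the universal incidence variety
\[
\tilde\Gamma \;=\; \bigl\{([f],[x],[y]) \in \bP(S^3) \times \bP^n \times \bP^n : H_f(x)\cdot y = 0\bigr\},
\]
show that the total space $\tilde\Gamma$ is smooth, and then invoke generic smoothness in characteristic zero to deduce smoothness of the fibre $\Gamma_f$ for $[f]$ general. The desingularization statement will then follow from Theorem A together with a direct analysis of the fibres of $\pi_1$.

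To prove that $\tilde\Gamma$ is smooth at a point $(f_0,[x_0],[y_0])$, I would compute the Jacobian of the $n+1$ trilinear defining equations $g_i(f,x,y)=(H_f(x)y)_i$ and isolate the block of partial derivatives with respect to the coefficients of $f$. This block is the linear map
\[
\Phi_{x_0,y_0} : S^3 \longrightarrow \bK^{n+1}, \qquad g \longmapsto H_g(x_0)\, y_0,
\]
and the task reduces to checking that $\Phi_{x_0,y_0}$ is surjective for every $x_0,y_0 \in \bK^{n+1}\setminus\{0\}$. By $\GL(n+1)$-equivariance there are only two cases to handle: if $x_0,y_0$ are linearly independent one normalises to $(e_0,e_1)$ and verifies that the cubics $x_0^2 x_1$, $x_0 x_1^2$, and $x_i x_0 x_1$ (for $i\neq 0,1$) map under $\Phi$ to a basis of $\bK^{n+1}$; if $y_0 \in \langle x_0 \rangle$ one applies the Euler identity $H_g(x_0)x_0 = 2\nabla g(x_0)$ and reduces to the (immediate) surjectivity of the gradient evaluation $g \mapsto \nabla g(x_0)$. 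Consequently $\tilde\Gamma$ is smooth everywhere, and generic smoothness applied to the dominant projection $\tilde\Gamma \to \bP(S^3)$ gives smoothness of $\Gamma_f$ for general $[f]$ (in particular, within the open locus of smooth cubics). Irreducibility of $\Gamma_f$ then follows from connectedness, since $\pi_1$ surjects onto the irreducible hypersurface $\cH_f$ with connected fibres $\bP(\ker H_f(x))$.

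For the desingularization claim, $\pi_1$ is projective hence proper. By Theorem A the smooth locus of $\cH_f$ equals $\cH_f \setminus \cD_{n-1}(f)$, and over this open set $H_f(x)$ has corank exactly one, so the fibre of $\pi_1$ is a reduced single point; a tangent-space comparison (both sides being smooth of dimension $n-1$) then shows that $\pi_1$ restricts to an isomorphism there, so $\pi_1$ is birational. Combined with the smoothness of $\Gamma_f$ just established, this realises $\pi_1 : \Gamma_f \to \cH_f$ as a desingularization. The main technical obstacle I anticipate is the uniform surjectivity of $\Phi_{x_0,y_0}$, and in particular the degenerate case $y_0 \parallel x_0$ handled via the Euler identity; once smoothness of $\tilde\Gamma$ is in hand, the remaining verifications (properness, birationality, connectedness) are standard.
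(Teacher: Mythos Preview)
Your proposal is correct and follows the same strategy as the paper: build the universal incidence variety over the parameter space of cubics, prove the total space is smooth, and apply generic smoothness in characteristic zero to the projection onto the cubic parameter; the birationality of $\pi_1$ over $\cH_f\setminus\cD_{n-1}(f)$ is handled identically. The only cosmetic difference lies in the smoothness check---the paper restricts the base to the open locus $U$ of smooth cubics and projects onto the $([x],[y])$-factors (where the diagonal is automatically avoided, since $V(f)$ smooth forces $[x]\neq[y]$ on $\Gamma_f$), whereas you work over all of $\bP(S^3)$ and verify full rank of the Jacobian in the $f$-direction, treating the diagonal case separately via the Euler identity; both routes reduce to the same surjectivity of $vw:S^3\to S^1$.
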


The analysis of the Hessian loci is related to the theory of standard Artinian Gorenstein algebras (SAGAs, for brevity), for example in relation to the validity of some Lefschetz property (see, for example, \cite{HMNW}, \cite{MMN},\cite{MMO}, \cite{AR19}, \cite{DGI20}, \cite{DI22}  or \cite{bookLef} for a deep treatment). 
Furthermore, in this setting a correspondence as the above $\Gamma_f$ has been recently used in \cite{BFP} and \cite{BF} for giving a new proof of the above-mentioned theorem of Gordan and Noether and for proving the strong Lefschetz property for complete intersection SAGAs in codimension $5$ presented by quadrics and the strong Lefschetz property in degree $1$ for the same algebras with codimension $6$. \smallskip

Viceversa, the study of SAGAs can provide tools for analyzing Hessian loci. Indeed, by Macaulay theory of inverse systems, every SAGA can be written as $A_g=D/\Ann_D(g)=\oplus_{i=0}^{\deg(g)}A^i$ where $g\in S$ and $D=\bK[y_0,\cdots,y_n]$, with $y_i=\frac{\partial}{\partial x_i}$, is the ring of linear differential operators. When $V(f)$ is a smooth cubic, the Hessian hypersurface can be identified with the non-Lefschetz locus of $A_f$, i.e. the subvariety of $\bP(A^1)\simeq\bP^n$ of elements $v$ whose multiplication map $v\cdot:A^1\rightarrow A^2$ has non-trivial kernel. The interested reader can refer to \cite{BMMN}, \cite{AR19}, and \cite{BF}, where this locus has been studied.
\medskip

With the natural identification between $\bP^n$ and $\bP(A^1)$, one can give a very transparent description of the loci $\cD_k(f)$, namely
$$\cD_k(f)=\{[y]\in \bP(A^1)\,|\, \Rank(y\cdot:A^1\to A^2)\leq k\}.$$
With this description, we provide a very clean proof of the following:

\begin{thmC*}[Singularities for general cubic]
For a general smooth cubic hypersurface $V(f)$, if $\cD_k(f)\setminus\cD_{k-1}(f)$ is non-empty, then $\Sing(\cD_k(f))=\cD_{k-1}(f)$.
\end{thmC*}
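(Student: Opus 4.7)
The plan is to realize $\cD_k(f)$ as the pullback of the classical symmetric determinantal variety under the linear map $\phi:A^1\to\Sym^2(A^1)^*$ coming from the cubic form, and then combine a tangent-space computation with a genericity argument.

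\medskip
\noindent\textbf{Setup.} Using the Gorenstein duality $A^2\cong (A^1)^*$ afforded by the perfect pairing $A^1\otimes A^2\to A^3\cong\bK$, I identify multiplication by $y\in A^1$ with the symmetric bilinear form
\[
\phi(y)\colon A^1\times A^1\to A^3\cong\bK,\qquad (v,w)\mapsto y\cdot v\cdot w.
\]
Then $\phi\colon A^1\hookrightarrow \Sym^2(A^1)^*$ is linear, and $\cD_k(f)=\phi^{-1}(\cQ_k)$, where $\cQ_k\subset\bP(\Sym^2(A^1)^*)$ is the locus of symmetric forms of rank at most $k$. Classically, $\Sing(\cQ_k)=\cQ_{k-1}$, so the question reduces to understanding the intersection of $\bP(A^1)$ (viewed inside $\bP(\Sym^2(A^1)^*)$) with $\cQ_k$.

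\medskip
\noindent\textbf{Tangent-space computation.} Take $[y_0]\in\cD_k(f)\setminus\cD_{k-1}(f)$ and set $K=\ker\phi(y_0)\subset A^1$, so $\dim K=n+1-k$. The tangent cone to $\cQ_k$ at a rank-$k$ symmetric form consists of those forms that vanish identically on $K\times K$. Pulling back through $\phi$,
\[
T_{[y_0]}\cD_k=\{[v]\in\bP(A^1)\,:\, v\cdot k_1\cdot k_2=0 \text{ in } A^3 \text{ for all } k_1,k_2\in K\}.
\]
Equivalently, setting $K\cdot K\subseteq A^2$ the image of $\Sym^2 K\to A^2$, one has $\dim T_{[y_0]}\cD_k=n-\dim(K\cdot K)$. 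Hence $[y_0]$ is a smooth point of expected dimension $n-\binom{n+2-k}{2}$ if and only if the multiplication map $\Sym^2 K\to A^2$ is injective, i.e.\ $\Sym^2 K\cap J_f^2=0$ inside $S^2$.

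\medskip
\noindent\textbf{Containment $\cD_{k-1}\subseteq\Sing(\cD_k)$.} At any $[y_0]\in\cD_{k-1}$, every $(k+1)\times(k+1)$ minor of the matrix of $\phi$ vanishes to order at least $2$ (removing any row leaves a matrix of rank $\leq k-1<k$), so all local equations cutting out $\cD_k$ have trivial differential at $[y_0]$. Since $\cD_k\subsetneq\bP^n$ (guaranteed by the hypothesis $\cD_k\setminus\cD_{k-1}\neq\emptyset$ together with Theorem A), the point is singular.

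\medskip
\noindent\textbf{Containment $\Sing(\cD_k)\subseteq\cD_{k-1}$.} This is the only place genericity of $f$ enters. By the tangent-space formula, it suffices to show that for a general smooth cubic $f$, the map $\Sym^2 K_{y_0}\to A^2$ is injective for every $[y_0]\in\cD_k(f)\setminus\cD_{k-1}(f)$. On the incidence variety
\[
\cZ_k=\{([f],[y_0])\in\bP(S^3)\times\bP^n\,:\, V(f)\text{ smooth},\,\Rank\phi(y_0)=k\},
\]
the ``bad'' locus $\cZ_k^{\mathrm{bad}}=\{([f],[y_0]):\Sym^2 K_{y_0}\cap J_f^2\neq 0\}$ is Zariski-closed. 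The plan is to show $\cZ_k^{\mathrm{bad}}$ does not surject onto $\bP(S^3)$, either by an upper-semicontinuity/dimension count, or by exhibiting a single explicit cubic $f_0$ (e.g.\ a Fermat-type perturbation) for which the injectivity holds simultaneously at every rank-$k$ point; by the $\GL_{n+1}$-action on $\bP^n$ one may first reduce $[y_0]$ to a normal form before testing. The main obstacle of the proof is precisely this uniformity: the witness $f_0$ must control \emph{all} rank-$k$ strata at once (or, conversely, the semicontinuity estimate must be sharp enough to push the bad locus to a proper subvariety of $\bP(S^3)$). Granting this, the complement of the projection of $\cZ_k^{\mathrm{bad}}$ is a non-empty open subset of $\bP(S^3)$ on which the claim holds.
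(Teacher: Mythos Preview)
Your setup and tangent-space computation are correct, and your incidence variety $\cZ_k$ is essentially the paper's $\cJ_k^s$. But the proof has a genuine gap: the final step---showing that $\cZ_k^{\mathrm{bad}}$ does not surject onto the space of smooth cubics---is not carried out. You propose two routes (a dimension count or an explicit witness $f_0$) and then write ``granting this''; neither is executed. The explicit-witness route is in fact treacherous: the paper observes (Remark~\ref{REM:FailThmsmooth2}) that the Klein cubic fourfold has $\cD_4\setminus\cD_3$ non-empty but singular, and that the Fermat cubic already has $\cH_f$ of the wrong dimension, so natural candidates fail. The dimension count is not obviously easier than the theorem itself, since one must parametrize all triples $(f,y_0,q)$ with $q$ a nonzero element of $\Sym^2 K_{y_0}$ killed in $A^2$, uniformly in $y_0$.

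The paper sidesteps this obstacle entirely. Instead of studying the bad locus inside $\cZ_k$, it shows that the whole incidence variety $\cJ_k^s=\{(q,v,f):v(f)=q,\ \Rank(q)=k\}$ is \emph{smooth}. The key is to project the other way, onto $\tilde{\cQ}_k^s\times(D^1\setminus\{0\})$: the map $(q,v,f)\mapsto(q,v)$ is a surjective submersion with affine fibers $f_0+\Ann_{S^3}(v)$ of constant dimension (because $v(\cdot):S^3\to S^2$ is surjective with kernel of fixed dimension), and the base is smooth since $\cQ_k^s$ is the smooth locus of $\cQ_k$. Once $\cJ_k^s$ is smooth, generic smoothness applied to the projection $\cJ_k^s\to S^3$ gives that the general fiber is smooth; that fiber is exactly the affine cone over $\cD_k^s(f)$. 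This is the missing idea: rather than bounding a bad locus, make the total space smooth by choosing the ``easy'' projection first.
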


Moreover, there is another description of these loci in the context of degeneracy loci of symmetric maps of vector bundles. We recall that given a vector bundle $E$ and a line bundle $L$ over a projective variety $X$, a  vector bundle map $\varphi:E\rightarrow E^*\otimes L$ is symmetric if $\varphi^\ast=\varphi.$ The degeneracy loci associated to $\varphi$ are defined as
$$\cD_k'(\varphi)=\{x\in X \ | \ \Rank(\varphi_x)\le k\}$$ 
and they have been studied in several works (for example in \cite{FL}, \cite{FL_Positive}, \cite{HT_CH}, \cite{HT_SS}, \cite{HT_CONN_OddRanks}, \cite{Laz}, \cite{Tu85}, \cite{Tu_CONN}, \cite{Tu_Even}).  By considering the vector bundle morphism
$$\varphi:\cO_{\bP^n}^{n+1}\to \cO_{\bP^n}^{n+1}(1)$$ 
defined by $H_f$ with $f\in S^3$ one easily sees that $\cD_k(f)=\cD_k'(\varphi)$.

With this description we can then compute the relevant Chern classes and show the non-emptiness and the connectedness properties of $\cD_k(f)$. 

The simplest case, not still analyzed in the literature, is the one of smooth cubic fourfolds in $\bP^5$. From the formula for the expected dimension, one can observe that it is also the last case where the singular locus can be smooth: as soon as $n\geq6$ the singular locus of the Hessian hypersurface of a cubic in $\bP^n$ is itself singular. One has the following:  

\begin{thmD*}[Hessian of cubic fourfolds: geometric invariant of the singular locus]
Let $X=V(f)$ be a general smooth cubic fourfold defined over $\bC$. Then, 
\begin{enumerate}[(a)]
\item the singular locus $Y=\Sing(\cH_f)=\cD_4(f)\subset \bP^5$ is a smooth surface;
\item $Y$ is an irreducible and minimal surface of general type with degree $35$ and canonical divisor $K_Y=3H|_Y+\eta$, where $H$ is the hyperplane class in $\bP^5$ and $\eta$ is a non-trivial $2$-torsion element in $\Pic^0(Y)$. 
\item $Y$ has topological Euler characteristic $e(Y)=357$, irregularity $q=0$ and 
$\chi(Y)=56$.
\end{enumerate}
\end{thmD*}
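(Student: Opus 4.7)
Theorem~C identifies $\Sing(Y)$ with $\cD_3(f)$. The expected codimension of $\cD_r(f)\subset\bP^5$ is $\binom{7-r}{2}$, giving $6>5$ for $r=3$ and $3$ for $r=4$. A standard genericity argument on $\bP(S^3)$ (or a Bertini-type statement for symmetric degeneracy loci) then forces $\cD_3(f)=\emptyset$ for general $f$, so $Y=\cD_4(f)$ is smooth of the expected dimension~$2$.

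\textbf{Numerical data via the symmetric degeneracy locus structure.} Viewing $Y=\cD'_4(\varphi)$ with $\varphi:\cO_{\bP^5}^{6}\to\cO_{\bP^5}^{6}(1)$ symmetric, the Harris--Tu / J\'ozefiak--Lascoux--Pragacz formula for corank-$2$ symmetric degeneracy loci expresses $[Y]\in A^{3}(\bP^{5})$ as a Schur-$P$ polynomial in the Chern classes of $E^{*}\otimes L$. For $E=\cO^{6}$ and $L=\cO(1)$ this reduces to a universal polynomial in the expansion of $(1+H)^{6}$; one extracts $[Y]=35H^{3}$, hence $\deg Y=35$. Irreducibility follows from the connectedness theorems for symmetric degeneracy loci of ample bundles (Tu, Harris--Tu, Fulton--Lazarsfeld).

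\textbf{Double cover and canonical class.} At each $[x]\in Y$ the Hessian quadric $Q_{x}\subset\bP^{5}$ has rank exactly $4$ (rank $3$ is excluded since $\cD_{3}(f)=\emptyset$) and therefore carries two rulings by $\bP^{2}$'s. Globally, selecting a ruling defines an \'etale double cover $p:\tilde Y\to Y$, classified by a $2$-torsion class $\eta\in\Pic(Y)$, with $p_{*}\cO_{\tilde Y}=\cO_{Y}\oplus\eta$; this is the double cover alluded to in the abstract. The formula $K_{Y}=3H|_{Y}+\eta$ is then derived either (i) from the symmetric Eagon--Northcott-type resolution of $\cO_{Y}$ in $\bP^{5}$, reading off $\omega_{Y}$ as a sheaf-Ext and tracking how the two-sheeted choice of ruling enters the dualizing sheaf, or (ii) by computing $K_{\tilde Y}$ on a desingularization of $\tilde Y$ realised inside a flag bundle and pushing forward along $p$.

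\textbf{Non-triviality of $\eta$ and invariants -- the main obstacle.} The heart of the proof is showing $\eta\neq 0$, equivalently that $\tilde Y$ is connected. The approach I would take is monodromy: as $[x]$ traces a suitable loop in $Y$ the two rulings of $Q_{x}$ are interchanged, so the cover does not split. Making this monodromy calculation effective is likely to require specialization to a cubic $f_{0}$ with large symmetry, where $\tilde Y(f_{0})$ can be exhibited as connected by hand, after which semicontinuity transports $\eta\neq 0$ to the general $f$. Once $\eta\neq 0$ is established, $K_{Y}=3H|_{Y}+\eta$ is numerically equivalent to an ample class and so is itself ample on the surface, giving minimality and general type. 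The remaining invariants follow: $K_{Y}^{2}=9\cdot 35=315$; $e(Y)=357$ from a direct Chern-class computation (via Thom--Porteous, or via the conormal sequence combined with the symmetric resolution of $\cO_{Y}$); $\chi(\cO_{Y})=(K_{Y}^{2}+e(Y))/12=56$ by Noether; and $q=0$ from a Lefschetz-type theorem for degeneracy loci of ample bundles (Fulton--Lazarsfeld), applied to $Y$ inside $\bP^{5}$.
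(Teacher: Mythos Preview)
Your overall architecture is close to the paper's, but there is a genuine gap at the crux, and two of your proposed shortcuts diverge from what the paper actually does.

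\textbf{The gap: identifying $\eta$ with the double-cover class.} You define $\eta$ as the class of the \'etale double cover $p:\tilde Y\to Y$ coming from the two rulings, and then assert $K_Y=3H|_Y+\eta$ via an Eagon--Northcott/sheaf-Ext or flag-bundle computation. The paper does \emph{not} establish this identification; it only proves $2K_Y=6H|_Y$ from the normal bundle $\cN_{Y/\bP^5}\simeq(\Sym^2 B^*)(1)$ and the relation $2c_1(B)=-kH|_Y$, so that $K_Y-3H|_Y$ is \emph{some} $2$-torsion class. That this class agrees with the double-cover class is stated as an open conjecture. Consequently, your route to $\eta\neq 0$ (show $\tilde Y$ connected, hence the cover class is nonzero) does not, as written, prove the statement of Theorem~D. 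The paper instead shows $K_Y\neq 3H|_Y$ by a Hilbert-series argument: on the Klein cubic one computes (by machine, then constant in the flat family) that the image of $S^3$ in $H^0(\cO_Y(3))$ has dimension $56$, whereas $p_g(Y)=55$; if $K_Y=3H|_Y$ this would force $h^0(\cO_Y(3))=55\geq 56$, a contradiction.

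\textbf{Where the paper differs on $\tilde Y$ and on $q(Y)$.} You propose monodromy plus specialisation to prove $\tilde Y$ connected, and a Fulton--Lazarsfeld ``Lefschetz-type'' statement for $q(Y)=0$. The paper does neither. It realises $\tilde Y$ via the Stein factorisation of $\pi:Z\to Y$, where $Z\subset G(4,\cO_{\bP^5}^6)=G(3,\bP^5)\times\bP^5$ is the zero locus of the induced section $\psi$ of $\Sym^2(\cS^*)\boxtimes\cO_{\bP^5}(1)$. Connectedness of $Z$ (hence of $\tilde Y$) and $h^1(\cO_Z)=0$ are obtained from the Koszul resolution of $\cO_Z$ together with Bott's theorem on $G(3,\bP^5)$, which gives the needed vanishings $H^j(\bigwedge^j\cP^*)=0$. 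Then $q(\tilde Y)=0$ follows from $h^1(\cO_Z)=0$ via Leray for $\alpha:Z\to\tilde Y$, and $q(Y)=0$ from injectivity of $\beta^*$ on $H^0(\Omega^1)$. Your ``Lefschetz-type'' claim for symmetric degeneracy loci is not a standard theorem in the references used, and would need justification; the paper's route through $Z$ avoids this issue entirely. Your computations of $\deg Y$, $e(Y)$, $K_Y^2$, and $\chi(\cO_Y)$ are essentially the same as the paper's (which cites Pragacz's formula for $c_2(Y)$).
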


We would like to stress some remarks about the above statement:
\begin{itemize}
\item We assume $\bK=\bC,$ since we use singular cohomology, anyway the general case follows from Lefschetz' principle.
\item Theorem D still holds for $Y=\cD_4'(\varphi)$ where $\varphi:\cO_{\bP^5}^{6}\to \cO_{\bP^5}^{6}(1)$ is a general symmetric vector bundle map not necessarily induced by a hessian matrix.
\end{itemize}

More importantly, we would like to point out the most intriguing part of our work. The computation of the canonical divisor and the irregularity of $Y$ does not follow from {\cite {HT_CH}}. To detect the $2$-torsion of the cohomology we have to use a geometric construction valid for even $k$. Indeed, when $k=2m$ there is a natural unramified double covering $\tilde{Y}$ of $Y=\cD_{2m}'(\varphi)\setminus \cD_{2m-1}'(\varphi)$. For any $x\in Y$ there are two families of projective spaces of maximal dimension contained in the quadric defined by the symmetric map $\varphi_x$. The connected components of these families are two points associated to $x$ and this defines $\tilde{Y}$. 
Finally one has to show that this covering is connected. In order to do this, we study the complete family of projective spaces, that turns out to be a zero section of a non ample vector bundle, using Bott's vanishing theorem and some basic representation theory.
\bigskip




The plan of the article is the following. In Section \ref{SEC:Notations}, we will set the notation and derive the first basic results, based on the nature of the Hessian matrix of a general cubic form $f\in S=\bK[x_0,\cdots,x_n]$. We will introduce the loci $\cD_k(f)$ and compute their expected dimension. In Section \ref{SEC:Singanddesing}, we will introduce the correspondence $\Gamma_f$ and we will prove Theorem A and Theorem B. Section \ref{SEC:stratsmooth} will be devoted to the proof of Theorem C. In Section \ref{SEC:Degeneracyloci}, we will deal with degeneracy loci of symmetric morphisms of vector bundles. In this section, we will also give the general construction for the double cover mentioned above. Moreover, we prove that the double cover of $\cD_4(\varphi)'\subseteq \bP^5$ is non-trivial. In the last Section \ref{SEC:SingHessCubicFourfold}, we will describe the geometry of the surface $Y$ arising as the singular locus of the Hessian hypersurface $\cH_f$ of a general cubic fourfold, by showing Theorem D. We conclude the section by giving some additional information about the geometry of $Y$ by exploiting a computer algebra software.


\section{Hessians and quadrics}
\label{SEC:Notations}
Let $\bK$ be an algebraically closed field of characteristic $0$ and consider the projective space $\bP^n$ with $n\geq 2$. The homogeneous coordinate ring of $\bP^n$ is $S=\bK[x_0,\dots, x_n]=\bigoplus_{k\geq0}S^k$ where $S^k=H^0(\cO_{\bP^n}(k))$. 
Consider the graded algebra $D$ of linear differential operators on $S$, namely, $$D=\bK[y_0,\dots, y_n]=\bigoplus_{k\geq 0}D^k$$ where we set $y_i:=\frac{\pa{\,}}{\partial x_i}$.
This comes with the natural pairing $S\times D\to S$ which induces an isomorphism $(S^k)^*\simeq D^k$ for all $k$. This allows us to identify $\bP^n$ with $\bP((S^1)^*)$ and $\bP(D^1)$. From this, through the article, if $v=(v_0,\dots, v_n)\in D^1$ we will write $v(g)$ in order to mean $\sum_{k=0}^{n}v_ky_k(g)\in S$
for any $g\in S$. Moreover, where no confusion arises, if $v,w\in D^1$ we will simply write $vw(g)$ in order to mean $v(w(g))$.
\medskip

For each $f\in S^d$ one can define the gradient $\nabla(f)=(y_i(f))_{i=0,\dots,n}\in (S^{d-1})^{\oplus n+1}$ and the Hessian matrix of $f$ and the hessian of $f$, i.e 
$$H_f=\big((y_iy_j)(f)\big)_{i,j=0,\dots,n}\in \Sym^2\big((S^{d-1})^{\oplus n+1}\big)\qquad\mbox{ and }\qquad h_f=\det(H_f)\in S^{(n+1)(d-2)}.$$
Moreover, let us define the \emph{Hessian hypersurface} associated to $f$ as $\cH_f:=V(h_f).$
For any $d\geq 2$ we consider the subloci of $\bP(S^d)$ given by
$$\cC_{sing}=\{[f]\,|\, V(f) \mbox{ is singular}\}\qquad \cC_{cone}=\{[f]\,|\, V(f) \mbox{ is a cone}\}\quad \mbox{ and }\quad\cC_{GN}=\{[f]\,|\, h_f=0\}.$$
The latter is called the Gordan-Noether locus and it is well known that 
$$\cC_{cone}\subseteq \cC_{GN}\subseteq \cC_{sing}$$ 
and that $\cC_{sing}$ is a divisor in $\bP(S^d)$. Moreover, the first inclusion is strict unless $d=2$ or, by the Gordan-Noether's theorem, $d\geq 3$ and $n\leq 3$. The second inclusion is an equality for $d=2$, but is again strict for $d\geq 3$.
\medskip

Before moving on, let us recall the differential Euler identity (see \cite[Lemma 7.2.19]{Rus}), namely the relation
\begin{equation}
\label{EQ:DiffEuler}
v^m(G)=\left(\sum_{k=0}^n v_ky_k\right)^m(G)=m!\cdot G\left(v_0,\dots,v_n\right)
\end{equation}
where $v=(v_0,\dots, v_n)\in D^1$ and $G\in S^m$. 
\medskip

We summarize here some easy and probably well known results concerning the case $d=3$. Some of them are treated, for example, in \cite{Dol}.
\begin{lemma}
\label{LEM:MAGICHESS}
Let $f$ be an element in $S^3$.  
\begin{enumerate}[(a)]
\item For all $v,w\in \bK^{n+1}$ we have $H_f(v)\cdot w=\nabla(vw(f))$. In particular, $H_f(v)\cdot w=H_f(w)\cdot v$.
\item For all $v\in \bK^{n+1}$ one has $2\nabla(f)(v)=H_f(v)\cdot v$. In particular, assuming $f\neq 0$, $[v]\in \bP^n$ is singular for $V(f)$ if and only if $H_f(v)\cdot v=0$ or, equivalently, $v^2(f)=0$.
\item For all $v,w\in \bK^{n+1}$ we have $w^T\cdot H_f(v)\cdot w=2(v(f))(w)$.
\end{enumerate}
\end{lemma}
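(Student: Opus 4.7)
The plan is to prove all three parts by direct coordinate computation, leveraging the fact that for $f\in S^3$ the third partials $y_iy_jy_kf$ are constants and the second partials $y_iy_jf$ are linear forms. The single identity that underlies everything is
$$(H_f(v)\cdot w)_i = \sum_{j,k} v_k w_j (y_iy_jy_kf),$$
which is symmetric in $(i,j,k)$ and fits both sides of each assertion.

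For (a), I would compute $(H_f(v)\cdot w)_i = \sum_j (y_iy_jf)(v)\, w_j$ and then use the linearity of $y_iy_jf$ to rewrite $(y_iy_jf)(v)=\sum_k v_k (y_ky_iy_jf)$. Comparing with $y_i(vw(f))=\sum_{j,k} v_k w_j (y_iy_jy_kf)$ gives $H_f(v)\cdot w=\nabla(vw(f))$, and the symmetry $H_f(v)\cdot w=H_f(w)\cdot v$ follows from $vw=wv$ as differential operators.

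For (b), I would specialize (a) to $w=v$, obtaining $H_f(v)\cdot v=\nabla(v^2(f))$. Since $y_if\in S^2$, the Euler identity \eqref{EQ:DiffEuler} with $m=2$ gives $v^2(y_if)=2\,(y_if)(v)$, so $\nabla(v^2(f))=2\,\nabla(f)(v)$, proving the main equation. For the singularity criterion, $[v]\in\bP^n$ is singular on $V(f)$ iff $\nabla(f)(v)=0$, equivalently iff $H_f(v)\cdot v=0$. Finally, since $v^2(f)\in S^1$ is a linear form whose $i$-th coefficient is $v^2(y_if)=2\,(y_if)(v)$, it vanishes as a polynomial exactly when $\nabla(f)(v)=0$.

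For (c), I would combine (a) with two applications of Euler: write $w^T\cdot H_f(v)\cdot w = w\cdot\nabla(vw(f))=w(vw(f))=w^2(v(f))$ (using $vw=wv$ as operators), and then since $v(f)\in S^2$, the identity \eqref{EQ:DiffEuler} with $m=2$ gives $w^2(v(f))=2\,(v(f))(w)$, which is the claim. No substantive obstacle is expected beyond careful bookkeeping of indices and Euler factors at the appropriate degrees; each part reduces to the same symmetric triple-sum in the constants $y_iy_jy_kf$.
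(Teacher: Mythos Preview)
Your proof is correct and follows essentially the same approach as the paper: direct coordinate computation for (a), specialization plus the differential Euler identity for (b), and a combination of (a) with Euler for (c). The only minor variation is in (c), where the paper routes through (b) and the matrix symmetry of $H_f$ (writing $w^T H_f(v) w = v^T H_f(w) w = 2 v^T\nabla(f)(w)$), whereas you go directly via $w^T\nabla(vw(f))=w^2(v(f))$ and then apply Euler; both are equally elementary.
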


\begin{proof}
\textbf{(a)}$\quad$ Since $f\in S^3$, we have $vw(f)\in S^1$. Notice that an element $g\in S^1$ is identified by its gradient $\nabla(g)$ by the differential Euler relation. More precisely, one can easily see that if $g=\sum_{k}a_k x_k$ then $g(e_k)=a_k=y_k(g)$. From this, by $\bK$-bilinearity, it is enough to consider the case $v=e_i$ and $w=e_j$. We have
$$H_f(e_i)\cdot e_j=(H_f(e_i))^{j}=((H_f)_{kj}(e_i))_{k=0}^{n}= (y_iy_jy_k(f))_{k=0}^{n}=\nabla(y_iy_j(f))$$
where, if $M$ is a matrix, we set $M^{j}$ to be its $j$-th column and $M_{ij}$ to be the $i$-th entry of $M^j$.

\textbf{(b)}$\quad$ From (a) we have 
$$H_f(v)\cdot v=\nabla(v^2(f))=(y_kv^2(f))_{k=0}^n=(v^2y_k(f))_{k=0}^n.$$
Now, since $y_k(f)\in S^2$, by the Euler differential identity we have $v^2(y_k(f))=2y_k(f)
(v)$, which proves the claim. 

\textbf{(c)}$\quad$ Using (a), (b) and the symmetry of $H_f$, we obtain
$$w^T\cdot H_f(v)\cdot w=w^T\cdot H_f(w)\cdot v=v^T\cdot H_f(w)\cdot w=2v^T\cdot \nabla(f)(w).$$
On the other hand, $v^T\cdot\nabla(f)=v(f)$, so we get the claim.
\end{proof}

In particular, from $(c)$ a cubic $V(f)\subseteq \bP^n$ is a cone if and only if there exists $[v]\in \bP^n$ such that $H_f(v)\equiv 0$. \smallskip

For any $f\in S^d$, one can define the {\it {Jacobian ideal of $f$}}, i.e. the graded ideal $J_f=\bigoplus_{k\geq d-1}J_f^k$ spanned by the partial derivatives of $f$. One can also associate to $f$ its {\it {apolar ring}} $A_f=D/\Ann_D(f)$ (where $\Ann_D(f)$ is the annihilator of $f$ in $D$), which is a graded Artinian algebra with socle in degree $d$. We will denote by $A_f^k$, or simply $A^k$ if no confusion arises, the homogeneous part of degree $k$ of $A_f$, i.e. $D^k/(D^k\cap \Ann_D(f))$.

Moreover, one can show that $A_f$ is standard (i.e. it is generated in degree $1$) and it satisfies Gorenstein duality (namely, $A^d$ has dimension $1$ and the multiplication map $A^{k}\times A^{d-k}\to A^d\simeq \bK$ is a perfect pairing). Notice that, by the characterisation of cones (see, for example, \cite[Prop. 7.1.2]{Rus}), $V(f)$ is not a cone if and only if $\Ann_D(f)\cap D^1=\{0\}$, i.e. if and only if $A^1=D^1$. In this case, then, we have a natural identification between $\bP(D^1)$ and $\bP(A^1)$.
\medskip

From now on we will only deal with the case $d=3$. We consider the loci
\begin{equation}
\label{EQ:DefDk}
\cD_k(f)=\{[x]\in \bP^n\, |\, \Rank(H_f(x))\leq k\}=\{[y]\in \bP(A^1)\, |\, \Rank(y\cdot: A^1\to A^2)\leq k\}
\end{equation}
where the second equality easily follows from Lemma \ref{LEM:MAGICHESS}(a) under the above mentioned identification between $\bP^n$ and $\bP(A^1)\simeq \bP(D^1)$.
We are interested in studying these loci for a general $[f]\in \bP(S^3)$. 
\medskip

If $X=V(f)$ is a cubic hypersurface, which is not a cone, the associated apolar ring $A_f$ codifies all the information about the cubic $X$ itself and its Hessian $\cH_f$.

\begin{proposition}
\label{PROP:INUTIL}
Given a cubic hypersurface $X=V(f)$ (not a cone) and the corresponding $A_f$, we have 
\begin{enumerate}[(a)]
    \item $X=\{[y]\in \bP(A^1)\,|\, y^3=0\}$;
    \item $\Sing(X)=\{[y]\in \bP(A^1)\,|\, y^2=0\}$;
    \item $\cH_f=\cD_n(f)=\{[y]\in \bP(A^1)\,|\,\, \exists\, [x]\in \bP(A^1) \mbox{ with } xy=0\}$.
\end{enumerate}
\end{proposition}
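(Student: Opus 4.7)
The plan is to deduce all three items directly from the differential Euler identity \eqref{EQ:DiffEuler} and Lemma \ref{LEM:MAGICHESS}, using the identification $\bP^n\simeq \bP(D^1)\simeq \bP(A^1)$ (the second isomorphism being available because $V(f)$ is not a cone). Under this identification, a point $[x]=[(x_0,\dots,x_n)]\in\bP^n$ corresponds to $[y]$ with $y=\sum_k x_k y_k\in D^1$, and one passes back and forth between ``vanishing of $y^r\in D^r$ modulo $\Ann_D(f)$'' and ``vanishing of $y^r(f)\in S^{3-r}$''. This degree-bookkeeping is the single point to watch.

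For (a), I would apply \eqref{EQ:DiffEuler} with $m=3$ and $G=f\in S^3$: this yields $y^3(f)=6\,f(x_0,\dots,x_n)\in S^0=\bK$, so $[x]\in X=V(f)$ precisely when $y^3(f)=0$. Since $y^3\in D^3$ and $f$ has degree $3$, the condition $y^3(f)=0$ is exactly $y^3\in\Ann_D(f)\cap D^3$, i.e. $y^3=0$ in $A_f^3$.

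For (b), I would quote Lemma \ref{LEM:MAGICHESS}(b), which states that $[x]\in\Sing(V(f))$ if and only if $y^2(f)=0$ as an element of $S^1$. Since $y^2\in D^2$ and $y^2(f)\in S^1$, this vanishing is equivalent to $y^2\in\Ann_D(f)\cap D^2$, i.e. $y^2=0$ in $A_f^2$.

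For (c), the first equality $\cH_f=\cD_n(f)$ is tautological from $\cH_f=V(\det H_f)$ and the definition $\cD_n(f)=\{[x]\,|\,\Rank H_f(x)\le n\}$. The description as $\{[y]\mid \exists [x],\ xy=0\}$ then follows straight from \eqref{EQ:DefDk}: since $\dim A^1=n+1$, the condition $\Rank(y\cdot\colon A^1\to A^2)\le n$ is equivalent to the existence of a nonzero $[x]\in \bP(A^1)$ in $\Ker(y\cdot)$, i.e. to the existence of $[x]\in\bP(A^1)$ with $xy=0$ in $A^2$. There is no substantive obstacle here: the geometric content is entirely packaged in Lemma \ref{LEM:MAGICHESS} and in the equivalent descriptions of $\cD_k(f)$ provided by \eqref{EQ:DefDk}.
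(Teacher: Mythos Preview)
Your proof is correct and follows exactly the paper's approach: the paper's own proof simply states that the claims follow directly from Lemma~\ref{LEM:MAGICHESS} via the differential Euler identity, with $\cH_f=\cD_n(f)$ immediate from the definitions. You have merely spelled out these routine verifications in more detail than the paper does.
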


\begin{proof}
The claims follow directly from Lemma \ref{LEM:MAGICHESS} by using the differential Euler identity. By the definition of the loci $\cD_k(f)$, it is clear that $\cH_f=\cD_n(f)$.  
\end{proof}

Notice that, by Proposition \ref{PROP:INUTIL}, the Hessian locus corresponds to the so called {\it{non-Lefschetz locus}} of $A_f$, i.e. the set of points for which the validity of the weak Lefschetz property fails for $A_f$ (for a treatment of this topic the interest reader can refer to \cite{BMMN}, \cite{AR19} and \cite{BF}).
\medskip

The loci $\cD_k(f)$ give a natural stratification not only of the whole projective space, but also of the Hessian locus $\cH_f$. As a consequence of Lemma \ref{LEM:MAGICHESS} we have that for any $[f]\not \in \cC_{cone}$ we have a commutative diagram
\begin{equation}
\label{EQ:LinearEmbeddings}
\xymatrix{
& \bP^n \ar@/_1pc/@{^{(}->}[dl]_-{\tau_1} \ar@/^1pc/@{_{(}->}[rd]^-{\tau_2} & \\
\bP(S^2) & & \bP(\Sym^2(\bK^{n+1})) \ar[ll]_-{\simeq}
}
\qquad 
\xymatrix{
& [v] \ar@/_1pc/@{|->}[dl] \ar@/^1pc/@{|->}[rd] & \\
[v(f)] & & [H_f(v)] \ar@{|->}[ll]_{\simeq}
}
\end{equation}
where the diagonal arrows are linear embeddings of $\bP^{n}$, while the horizontal map is the canonical isomorphism $[M]\mapsto [x^T M x]$ which identifies a symmetric matrix $M$ with the quadratic form represented by $M$.
Since the rank of a symmetric matrix $M$ with coefficients in $\bK$ and of the quadratic form $x^T M x$ are the same, it can be useful to study the image of $\cD_k(f)$ via the linear embedding $\tau_1$ (or $\tau_2$). Passing from one map to the other will be useful to catch different features of the objects we want to study. Let us observe that the image of $\tau_1$ is exactly $\bP(J_f^2)$: if we define
$$\cQ_k=\{[q]\in \bP(S^2)\, |\, \Rank(q)\leq k\}$$
it is clear that $\tau_1(\cD_k(f))=\bP(J_f^2)\cap \cQ_k$.

In what follows, for brevity, we will not specify the linear embedding $\tau_i$ (for $i=1,2$) in the identification of the loci $\cD_k$ with their images. Moreover, we will write simply $\cD_{k}$ instead of $\cD_k(f)$ 
when it is clear from the context which $[f]\in \bP(S^3)$ we are considering.
In light of this we recall some important facts about the geometry of $\cQ_k$.

\begin{lemma}
\label{LEM:proprQk}
For any $1\leq k\leq n+1$, $\cQ_k$ is a closed subvariety of $\bP(S^2)$. Moreover
\begin{itemize}
    \item We have $\codim_{\bP(S^2)}\cQ_k=\binom{n+2-k}{2}$ and $\dim\cQ_k=kn-\frac{(k-1)(k-2)}{2}$;
    \item The degree of $\cQ_k$ as variety inside $\bP(S^2)$ is given by the formula
    $$\deg(\cQ_k)=\prod_{t=0}^{n-k}\frac{\binom{n+t+1}{n-k-t+1}}{\binom{2t+1}{t}}.$$
    \item For $1\leq k\leq n$, the singular locus of $\cQ_k$ coincides with $\cQ_{k-1}$.
\end{itemize}
\end{lemma}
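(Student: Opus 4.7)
The closedness of $\cQ_k$ is immediate: it is cut out by the $(k+1) \times (k+1)$ minors of the generic symmetric matrix, viewed as homogeneous polynomials on $\bP(S^2)$. The substantive content is the dimension, degree and singular-locus statements, all classical for rank-stratified determinantal loci. My plan is to treat them through a single desingularization by an incidence variety over a Grassmannian.

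Let $G = \mathrm{Gr}(n+1-k,n+1)$ and set
$$I_k = \{([q], L) \in \bP(S^2) \times G \, : \, L \subseteq \ker(q)\},$$
with projections $p_1, p_2$. For fixed $L$, the condition $L \subseteq \ker(q)$ means $q \in \Sym^2((\bK^{n+1}/L)^*)$, so $p_2$ is a projective bundle with fiber $\bP^{\binom{k+1}{2}-1}$. In particular $I_k$ is smooth and irreducible of dimension $(n+1-k)k + \binom{k+1}{2} - 1$, which simplifies to $kn - \tfrac{(k-1)(k-2)}{2}$. On the other hand $p_1$ surjects onto $\cQ_k$ (any $q$ of rank $\leq k$ has kernel of dimension $\geq n+1-k$) and is birational, since a form of rank exactly $k$ has a unique kernel of that dimension. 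This yields the dimension and codimension formulas, together with irreducibility of $\cQ_k$.

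For the singular locus, my plan is to compute the Zariski tangent space at a point $[q_0] \in \cQ_k \setminus \cQ_{k-1}$. Choosing a basis adapted to a decomposition $\bK^{n+1} = K_0 \oplus K_0^c$ with $K_0 = \ker(q_0)$ of dimension $n+1-k$, the matrix of $q_0$ is block-diagonal with an invertible $k \times k$ block. A first-order deformation $q_0 + \varepsilon q$ has rank $\leq k$ if and only if, by a Schur-complement calculation, the block of $q$ corresponding to $K_0 \times K_0$ vanishes. Hence the tangent space at $[q_0]$ equals $\{[q] : q|_{K_0 \times K_0} = 0\}$, of codimension $\binom{n+2-k}{2}$, which matches the codimension of $\cQ_k$; so $[q_0]$ is smooth. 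The same computation at a point of $\cQ_{k-1}$ exhibits a strictly larger tangent space, so $\cQ_{k-1} \subseteq \Sing(\cQ_k)$; combined with irreducibility of $\cQ_k$ this forces $\Sing(\cQ_k) = \cQ_{k-1}$.

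The degree formula is the main obstacle, being a genuine enumerative computation. Using the projective bundle structure of $p_2: I_k \to G$ I would express the pullback $p_1^*\cO_{\bP(S^2)}(1)$ in terms of the relative $\cO(1)$ on $I_k$ and of the tautological bundles on $G$; computing $(p_1^*H)^{\dim \cQ_k}$ then reduces, via the projection formula, to an intersection number on the Grassmannian, solvable by Schubert calculus. The closed product form in the statement is the classical Giambelli formula for the degrees of the symmetric determinantal strata, and I would invoke it from the standard references (see, e.g., \cite{HT_CH}) rather than rederive it in full.
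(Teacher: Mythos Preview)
Your argument is correct and in fact more detailed than the paper's own proof, which consists solely of citations to \cite[Chapter 22]{Har_book} and \cite{HT_SS}. The incidence-variety desingularization and tangent-space computation you sketch are exactly the standard route those references follow, so there is no real divergence in approach. One small bibliographic point: the closed-form degree formula for the symmetric rank strata is in \cite{HT_SS} rather than \cite{HT_CH}; the latter is the companion paper on Chern numbers of kernel and cokernel bundles.
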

\begin{proof}
See \cite[Chapter 22]{Har_book} and \cite{HT_SS} for the formula of the degree of $\cQ_k$.
\end{proof}

Notice that from the above description, it is clear that $\cD_{k-1}\subseteq \Sing(\cD_k)$ and one might expect that the equality holds. Actually, we will prove that this is true when $f$ is general (see Theorem \ref{THM:smooth2}and Corollary \ref{COR:DIMSING}) although it does not hold for all $[f]\not\in \cC_{sing}$ (see Remark \ref{REM:FailThmsmooth2}).

\begin{remark}
If we consider an element $[f]\in\cC_{GN}$, since in this case $h_f\equiv0$, we have that $\cD_n(f)$ is the whole projective space $\bP^n$, i.e. $\bP(J^2)\subset\cQ_n$. 
\end{remark}

Let us then take $[f]\in\bP(S^3)$ such that $V(f)$ is smooth and consider the first level of the stratification of $\cH$ presented above, i.e. the variety $\cD_{n-1}(f)$. We can immediately observe that in $\bP(S^2)$, which has dimension $\binom{n+2}{2}-1$, we have the subspaces $\bP(J_f^2)$ and $\cQ_{n-1}$, whose dimensions are $n$ and $\binom{n+2}{2}-4$ respectively (from Lemma \ref{LEM:proprQk}). Hence, we easily get that the expected dimension of $\cD_{n-1}(f)=\bP(J^2)\cap\cQ_{n-1}$ is
$$\Edim(\cD_{n-1})=\binom{n+2}{2}-4+n-\left(\binom{n+2}{2}-1\right)=n-3.$$

Since $\cD_{n-1}\subseteq\Sing(\cD_n)=\Sing(\cH)$ and $\dim(\cD_{n-1})\geq n-3$, we have the following:

\begin{proposition}\label{PROP:Hesssing}
For all $[f]\not\in\cC_{sing}$, the Hessian hypersurface $\cH_f$ has singular locus of dimension at least $n-3$ (i.e. $\Sing(\cH_f)$ has codimension at most $2$ in $\cH_f$). In particular, if $n\geq 3$, $\cH_f$ is always singular.
\end{proposition}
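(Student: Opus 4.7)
The plan is to make precise the two facts already recorded in the paragraph preceding the statement and to show that together they give the conclusion.

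First I would fix $[f]\notin \cC_{sing}$, which in particular forces $[f]\notin\cC_{cone}$, so that the partial derivatives $y_0(f),\dots,y_n(f)$ are linearly independent in $S^2$ and $\bP(J_f^2)\subset \bP(S^2)$ is a linear subspace of dimension exactly $n$. By Lemma \ref{LEM:proprQk}, the (irreducible) subvariety $\cQ_{n-1}\subset \bP(S^2)$ has codimension $\binom{3}{2}=3$, hence dimension $\binom{n+2}{2}-4$. The sum of the dimensions of these two subvarieties is $n+\binom{n+2}{2}-4$, which is $\geq \dim \bP(S^2)=\binom{n+2}{2}-1$ precisely when $n\geq 3$.

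Next, in the range $n\geq 3$ I would invoke the projective dimension theorem for intersections of closed subvarieties of projective space (Hartshorne I.7.2) to conclude that $\cD_{n-1}(f)=\bP(J_f^2)\cap \cQ_{n-1}$ is non-empty and that every irreducible component has dimension at least $n-3$, which realises the expected dimension bound.

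Finally I would combine this with the inclusion $\cD_{n-1}(f)\subseteq \Sing(\cD_n(f))=\Sing(\cH_f)$ observed just before the statement. This inclusion comes from Lemma \ref{LEM:proprQk} (giving $\Sing(\cQ_n)=\cQ_{n-1}$) together with the standard determinantal fact that $h_f=\det(H_f)$ vanishes to order $\geq 2$ wherever $H_f$ has rank $\leq n-1$, so every first partial of $h_f$ vanishes there. Putting everything together gives $\dim \Sing(\cH_f)\geq n-3$, and since $\cH_f$ is a hypersurface of pure dimension $n-1$ this says that $\Sing(\cH_f)$ has codimension at most $2$ in $\cH_f$; in particular $\Sing(\cH_f)\neq \emptyset$ for $n\geq 3$.

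There is no genuine obstacle here: the whole argument is a dimension count in $\bP(S^2)$ combined with the projective dimension theorem. The single point that really needs the theorem (rather than a bare numerical inequality) is the non-emptiness conclusion for $n\geq 3$, and this is exactly what Hartshorne I.7.2 delivers since $\bP(J_f^2)$ and $\cQ_{n-1}$ are closed subvarieties of a projective space whose dimensions sum to at least the ambient dimension.
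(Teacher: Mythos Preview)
Your argument is correct and is essentially the paper's own: the proposition is stated immediately after the paper records the two ingredients you use, namely the dimension count $\dim\bigl(\bP(J_f^2)\cap\cQ_{n-1}\bigr)\ge n-3$ and the inclusion $\cD_{n-1}(f)\subseteq\Sing(\cH_f)$ (the latter justified, as you do, via the Jacobi formula for derivatives of a determinant). The only difference is cosmetic: where the paper writes the inequality $\dim(\cD_{n-1})\ge n-3$ as an immediate consequence of the expected-dimension computation, you make this step explicit by quoting the projective dimension theorem for the intersection of the linear $\bP(J_f^2)$ with the irreducible $\cQ_{n-1}$ inside $\bP(S^2)$.
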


The inclusion $\cD_{n-1}\subseteq\Sing(\cH)$ can also be obtained by using Jacobi's formula, which controls the derivatives of the determinant of the Hessian matrix.
We will show, generalizing a result in \cite{AR}, that for all $[f]\not\in\cC_{sing}$ we actually have $\cD_{n-1}=\Sing(\cH)$ (see Theorem \ref{THM:SingH}) and that when $[f]$ is general, then $\cD_{n-1}$ has the expected dimension (see Section \ref{SEC:stratsmooth}). 

\begin{remark}
\label{RMK:example}
When $n\leq 4$, the above mentioned results are known. More precisely:
\begin{itemize}
    \item For $n=2$ it is well known that the Hessian curve associated to the general cubic plane curve is smooth.
    \item For $n=3$, the Hessian surface associated to the general cubic surface is singular in $10$ points, which are nodes for $\cH$ (see \cite{DVG});
    \item For $n=4$, Adler in \cite[Appendix IV]{AR} has shown that the Hessian hypersurface associated to the general cubic threefold is singular along a curve.
\end{itemize}
\end{remark}

We conclude this section with a remark about the expected dimension of $D_k(f)$.

\begin{remark}
\label{REM:EXPDIM}
As we have done above, one can argue that for $[f]\in \bP(S^3)$, 
$$\Edim(\cD_{k}(f))=n-\binom{n-k+2}{2}$$
is the expected dimension of $\cD_{k}(f)=\bP(J_f^2)\cap\cQ_{k}$. In particular the expected codimension of $\cD_k(f)$ is exactly the codimension of $\cQ_k$ in $\bP(S^2)$.
\end{remark}


\section{Singular loci and desingularizations}
\label{SEC:Singanddesing}

Set $U=\bP(S^3)\setminus\cC_{sing}$, i.e. the open set parametrizing smooth cubics in $\bP^n$ and consider
$[f]\in U$. As we have seen, the Hessian variety $\cH_f$ is singular with $\dim(\Sing(\cH_f))\geq n-3$ since it contains the locus $\cD_{n-1}(f)$. The aim of this section is twofold. Firstly, we want to show that {\it for all} $[f]\in U$ the singular locus of $\cH_f$ coincides with $\cD_{n-1}(f)$. Secondly, we want to describe a way to desingularize $\cH_f$ for $[f]\in U$ general. For both results, it will be central the following construction.
\smallskip

For any $[f]\in \bP(S^3)$ define $\Gamma_f$ (or $\Gamma$, if $f$ is clear from the context) as
\begin{equation}
\label{EQ:DefGamma}
\Gamma_f=\{([x],[y])\in \bP^n\times \bP^n \, | \, H_f(x)\cdot y=0\}=\{([x],[y])\in \bP(A^1)\times \bP(A^1) \, | \, xy=0\}.    
\end{equation}
We denote by $\pi_1$ and $\pi_2$ the natural projections from $\Gamma_f$ on the factors. Let us define, for any $[x]\in \bP^n$,
\begin{equation}
\iota([x])=\bP(\Ker(H_f(x))).
\end{equation}

Since the fiber of $\pi_1$ over $[x]\in \bP^n$ is $[x]\times \bP(\Ker(H_f(x)))$ we have a natural identification between such a fiber and $\iota([x])$. 
If we work with $\bP(A^1)$, we have $\iota([y])=\bP(\Ker(y\cdot:A^1\rightarrow A^2))$ for all $[y]\in \bP(A^1)$. Notice that $\iota([x])\neq \emptyset$ if and only if $[x]\in \cH_f$ and that
$\iota([x])\subset \cH_f$ since, for any $[y]\in \iota([x])$, we have $[x]\in\iota[y]$ by Lemma \ref{LEM:MAGICHESS}(a).

\begin{lemma}
\label{LEM:BasicFactsGamma}
The morphism $\tau([x],[y])=([y],[x])$ induces a natural involution on $\Gamma_f$ which acts freely on $\Gamma_f$ if and only if $[f] \not\in\cC_{sing}$. Moreover, the image of $\pi_i$ is the Hessian locus $\cH_f=\cD_n(f)$ and $\pi_i$ is an isomorphism over the open $\cH\setminus \cD_{n-1}(f)$.
\end{lemma}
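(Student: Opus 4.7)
The plan is to verify the four statements in order, each reducing to a direct computation via the identities in Lemma \ref{LEM:MAGICHESS}. First, for the involution, I note that the defining equation of $\Gamma_f$ is symmetric in $[x]$ and $[y]$: by Lemma \ref{LEM:MAGICHESS}(a) one has $H_f(x)\cdot y = H_f(y)\cdot x$, so if $([x],[y])\in\Gamma_f$ then $([y],[x])\in\Gamma_f$. Thus $\tau$ is well defined, and it is clearly an involution since $\tau^2 = \id$.

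For the freeness, a fixed point of $\tau$ on $\Gamma_f$ is a pair $([x],[x])\in\Gamma_f$, equivalently a point $[x]\in\bP^n$ with $H_f(x)\cdot x = 0$. By Lemma \ref{LEM:MAGICHESS}(b), this happens if and only if $2\nabla(f)(x)=0$, i.e.\ if and only if $[x]\in\Sing(V(f))$. Hence $\tau$ has a fixed point on $\Gamma_f$ exactly when $V(f)$ is singular, and $\tau$ acts freely if and only if $[f]\notin\cC_{sing}$.

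Next, for the image of $\pi_1$, the fiber of $\pi_1$ over $[x]$ is $\{[x]\}\times \bP(\Ker H_f(x))$. This fiber is non-empty precisely when $H_f(x)$ is singular, i.e.\ when $h_f(x)=0$, i.e.\ when $[x]\in \cH_f=\cD_n(f)$. Hence $\pi_1(\Gamma_f)=\cH_f$, and since $\pi_2=\pi_1\circ\tau$ we obtain the same image for $\pi_2$. To show that $\pi_1$ restricts to an isomorphism over the open set $\cH_f\setminus\cD_{n-1}(f)$, I would set $U=\cH_f\setminus\cD_{n-1}(f)$ and $V=\pi_1^{-1}(U)$. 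On $U$, the matrix $H_f$ has rank exactly $n$ at every point, so $\Ker H_f$ is a line subbundle of the trivial bundle $\cO_U^{n+1}$; the classifying map gives a morphism $U\to \bP^n$, $[x]\mapsto \bP(\Ker H_f(x))$, which together with the inclusion $U\hookrightarrow\bP^n$ defines a regular section $s:U\to V$ of $\pi_1|_V$. Since $\pi_1|_V$ is proper and the set-theoretic fibers over $U$ are single reduced points (each $\bP(\Ker H_f(x))$ being a single point for $x\in U$), the section $s$ is a two-sided inverse of $\pi_1|_V$, giving the desired isomorphism.

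The only step that requires care is the last one: showing that the scheme-theoretic inverse is regular, not merely a bijection on points. The hard part is the passage from ``bijective on geometric points'' to ``isomorphism of schemes'', and the right way to carry it out is to use that on $U$ the locally free sheaf $\Ker H_f$ has constant rank $1$, so it defines a genuine morphism $U\to\bP^n$; everything else is bookkeeping with the symmetry of $H_f$ and Lemma \ref{LEM:MAGICHESS}.
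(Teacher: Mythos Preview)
Your proof is correct and follows essentially the same route as the paper: both use Lemma \ref{LEM:MAGICHESS}(a) for the symmetry giving the involution, Lemma \ref{LEM:MAGICHESS}(b) to identify fixed points with singular points of $V(f)$, the description of the fiber $\pi_1^{-1}([x])=\{[x]\}\times\bP(\Ker H_f(x))$ to get the image, and the relation $\pi_2=\pi_1\circ\tau$ to transfer everything to $\pi_2$. The only difference is that you are more careful than the paper on the last point: the paper simply asserts that $\pi_1$ is an isomorphism over $\cH_f\setminus\cD_{n-1}(f)$ because the fibers are single points, whereas you explicitly construct the inverse as a morphism via the rank-one kernel subbundle, which is a cleaner justification of the scheme-theoretic statement.
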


\begin{proof}
The involution $\tau$ on $\bP^n\times \bP^n$ descends to an involution on $\Gamma$ since $H_f(v)\cdot w=H_f(w)\cdot v$ as proved in Lemma \ref{LEM:MAGICHESS}(a). A point $([v],[w])\in \Gamma$ is a fixed point if and only if $[v]=[w]$ so $\tau$ has a fixed point if and only if there exists $[v]\in \bP^n$ such that $H_f(v)\cdot v=0$. By Lemma \ref{LEM:MAGICHESS}(b), this happens if and only if $V(f)$ is singular.

By definition, $\pi_1^{-1}([v])$ is not empty if and only if we can find a non trivial element in $\Ker(H_f(v))$, i.e. if and only if the rank of $H_f(v)$ is not maximal. This happens exactly when $[v]\in \cH$ by definition of $\cH$. On $\cH^s=\cH\setminus \cD_{n-1}$ we have only points such that $\Rank(H_f(v))=n$ so $\Ker(H_f(v))$ has dimension $1$. Hence, $\pi_1|_{\pi_1^{-1}(\cH^s)}:\pi_1^{-1}(\cH^s)\to \cH^s$ is an isomorphism. The claim for the second projection follows since $\pi_{i}\circ \tau=\pi_{3-i}$, for $i=1,2$.
\end{proof}

The variety $\Gamma$ has been used in \cite{AR} in order to desingularize the Hessian locus for $n=4$. 
The approach used was to study a specific case, namely the case of the Klein cubic $f_0=x_0x_4^2+x_1x_0^2+x_2x_1^2+x_3x_2^2+x_4x_3^2$, and prove that $\Gamma$ is smooth. 
Then, the result holds also for $[f]\in U$ general. Unfortunately, this approach cannot be carried out completely for any $n$. Nevertheless, the methods used in \cite{AR} can be used and generalised in order to prove that $\Sing(\cH)=\cD_{n-1}(f)$ as we will do in the next subsection. Instead, we propose a different approach in order to describe the desingularization of $\cH$ for any $n$.


\subsection{Description of the singular locus}
In this subsection, we prove Theorem A. 




\begin{theorem}
    \label{THM:SingH}
For any $[f]\in U$ we have that 
$$\Sing(\cH_f)=\cD_{n-1}(f)$$
and that $\cH_f$ is reduced.
\end{theorem}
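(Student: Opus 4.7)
The strategy is to prove the two inclusions $\cD_{n-1}(f)\subseteq\Sing(\cH_f)$ and $\Sing(\cH_f)\subseteq\cD_{n-1}(f)$ via Jacobi's formula for the derivative of a determinant, and then deduce reducedness from the smoothness of $\cH_f$ on the complement of $\cD_{n-1}(f)$.

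\textbf{Step 1: $\cD_{n-1}(f)\subseteq \Sing(\cH_f)$.} By Jacobi's formula, $\partial_i h_f=\mathrm{tr}(\mathrm{adj}(H_f)\cdot\partial_i H_f)$. If $[x_0]\in\cD_{n-1}(f)$, then $\Rank(H_f(x_0))\leq n-1$, every $n\times n$ minor of $H_f(x_0)$ vanishes, so $\mathrm{adj}(H_f(x_0))=0$ and hence $\partial_i h_f(x_0)=0$ for every $i$. Thus $[x_0]\in\Sing(\cH_f)$.

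\textbf{Step 2: $\Sing(\cH_f)\subseteq\cD_{n-1}(f)$.} Take $[x_0]\in\cH_f\setminus\cD_{n-1}(f)$, so $\Rank(H_f(x_0))=n$ exactly. Then $\mathrm{adj}(H_f(x_0))$ has rank one and, by symmetry, can be written as $\mathrm{adj}(H_f(x_0))=\lambda\, vv^T$ for some $\lambda\in\bK^*$ and a nonzero $v\in\Ker(H_f(x_0))$. Since $f\in S^3$, the entries $(\partial_iH_f)_{jk}=y_iy_jy_k(f)$ are constants, so Jacobi gives
$$\partial_i h_f(x_0)=\lambda\, v^T(\partial_i H_f)\,v=\lambda\sum_{j,k}v_jv_k\,y_iy_jy_k(f)=\lambda\, y_i\bigl(v^2(f)\bigr).$$
Therefore $\nabla h_f(x_0)=\lambda\,\nabla(v^2(f))$. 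Now $v^2(f)\in S^1$ is a linear form, so $\nabla(v^2(f))=0$ if and only if $v^2(f)=0$, which by Lemma \ref{LEM:MAGICHESS}(b) happens if and only if $[v]\in\Sing(V(f))$. Since $[f]\in U$, $V(f)$ is smooth and $v\neq 0$, hence $v^2(f)\neq 0$ and $\nabla h_f(x_0)\neq 0$. Thus $[x_0]$ is a smooth point of $\cH_f$, proving the inclusion and simultaneously showing that $\cH_f$ is smooth on the open set $\cH_f\setminus\cD_{n-1}(f)$.

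\textbf{Step 3: Reducedness.} Since $\cH_f$ is a hypersurface (hence of pure dimension $n-1$), reducedness of $\cH_f$ is equivalent to the openness of the smooth locus in every component, i.e., to the condition that $\cD_{n-1}(f)$ contains no divisorial component of $\cH_f$, i.e. $\dim\cD_{n-1}(f)\leq n-2$. Equivalently, by Step 1, one must rule out $h_f$ having an irreducible factor of multiplicity $\geq 2$: such a factor $g$ would yield $V(g)\subseteq\cD_{n-1}(f)$ of codimension one in $\bP^n$. To exclude this, the natural tool is the Steinerian $\Gamma_f$ of \eqref{EQ:DefGamma}: it is cut out by the $n+1$ bilinear equations $H_f(x)\cdot y=0$ in $\bP^n\times\bP^n$, and its expected dimension is $n-1$. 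A linearization of these equations at a point $([x_0],[y_0])\in\Gamma_f$ shows that the tangent space has the expected dimension $n-1$ precisely when $\Ker(H_f(x_0))\cap\Ker(H_f(y_0))=0$, and the smoothness of $V(f)$ (through Lemma \ref{LEM:MAGICHESS}(b) and the free action of $\tau$ of Lemma \ref{LEM:BasicFactsGamma}) is used to force this transversality generically, so that $\dim\Gamma_f=n-1$. Since $\pi_1(\Gamma_f)=\cH_f$ and the fibers over $\cD_{n-1}(f)\setminus\cD_{n-2}(f)$ are positive-dimensional, one then reads off $\dim\cD_{n-1}(f)\leq n-2$, completing the proof.

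\textbf{Main obstacle.} Steps 1 and 2 are clean and use only Jacobi's formula together with the observation that the third derivatives of $f$ are constants. The genuine difficulty is Step 3, namely the uniform dimension bound $\dim\cD_{n-1}(f)\leq n-2$ for \emph{every} $[f]\in U$, not only for a general one; this is where the full strength of $V(f)$ being smooth must be exploited (via the free $\tau$-action on $\Gamma_f$) to prevent $\cD_{n-1}(f)$ from acquiring excess dimension.
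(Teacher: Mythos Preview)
Your Steps 1 and 2 are correct and take a genuinely different route from the paper. The paper proves smoothness of $\cH_f$ on $\cH^s=\cH_f\setminus\cD_{n-1}$ by passing to the correspondence $\tilde\Gamma$ and computing that the Jacobian of $(x,y)\mapsto H_f(x)\cdot y$ is the block matrix $(H_f(y)\mid H_f(x))$; singularity of a point $(x,y)$ with $\Rank H_f(x)=n$ then forces $y\in\Ker H_f(y)$, contradicting smoothness of $V(f)$. Your Jacobi-formula argument is more elementary and entirely intrinsic to $\bP^n$: you exploit that for a cubic the matrix $\partial_iH_f$ is constant, write the rank-one adjugate as $\lambda vv^T$, and reduce directly to $\nabla(v^2(f))\neq0$. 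This is a nice shortcut that avoids introducing $\Gamma_f$ for the main inclusion.

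Your Step 3, however, is muddled. The talk of tangent spaces and ``transversality generically'' is a red herring: what you actually need (and what the paper uses) is the purely topological fact that every subvariety of $\bP^n\times\bP^n$ of dimension $\geq n$ meets the diagonal $\Delta$. Since $V(f)$ is smooth, Lemma \ref{LEM:MAGICHESS}(b) gives $\Gamma_f\cap\Delta=\emptyset$ (equivalently, $\tau$ acts freely), so every component of $\Gamma_f$ has dimension $\leq n-1$. If $\cD_{n-1}$ contained an $(n-1)$-dimensional component, its $\pi_1$-preimage in $\Gamma_f$ would have dimension $\geq n$ (the fibers being positive-dimensional), a contradiction. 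This is the whole argument; no linearization or tangent-space computation is needed, and you should state it this way. With that correction, your proof is complete and your Steps 1--2 give a cleaner proof of $\Sing(\cH_f)=\cD_{n-1}$ than the paper's, while your Step 3 converges with the paper's diagonal argument for reducedness.
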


\begin{proof}
First of all, let us define $\cH^s=\cH\setminus \cD_{n-1}$, an open set of $\cH$ that can be described as
$$\cH^s=\{[x]\in\bP^n \ | \ \Rank(H_f(x))=n\}.$$
Observe that $\cH^s$ is not empty as, otherwise, the fiber of $\pi_1:\Gamma\to \cH$ over any point $[x]$ would have positive dimension. Hence, we would have a component of $\Gamma$ of dimension at least $n$. Since all varieties of $\bP^n\times \bP^n$ of dimension $n$ cut non-trivially the diagonal $\Delta$. This is impossible by Lemma \ref{LEM:MAGICHESS}(b) since $f\in U$. 
\medskip

As recalled before, it is enough to show that the inclusion $\Sing(\cH)\subseteq \cD_{n-1}$ holds, i.e. we show that each point in $\cH^s$ is a smooth point for $\cH$.
\smallskip

By setting $\Gamma^s=\pi_1^{-1}(\cH^s)$, using Lemma \ref{LEM:BasicFactsGamma} we have that $\pi_1|_{\Gamma^s}:\Gamma^s\rightarrow \cH^s$ is an isomorphism with inverse is $[x]\mapsto ([x],\bP(\Ker(H_f(x))))$. Then, we have that a point $[x]\in \cH^s$ is smooth in $\cH^s$ (so in $\cH$) if and only if the point $\pi_1^{-1}([x])=([x],[y])=([x],\bP(\mathrm{Ker}(H_f(x))))$ is smooth for $\Gamma$.
We can now consider the bihomogeneous lifting of $\Gamma$ to $\bK^{n+1}\times\bK^{n+1}$, i.e.
$$\tilde{\Gamma}=\{(x,y)\in \bK^{n+1}\times\bK^{n+1} \ | \ H_f(x)\cdot y=0\}.$$
To get the claim, we can then prove that $\tilde{\Gamma}$ is smoooth at $\tilde{p}$.
We can easily describe $\tilde{\Gamma}$ as the zero locus of a suitable function: indeed, we can write $\tilde{\Gamma}=V(\tilde{F})$, where 
$$\tilde{F}:\bK^{n+1}\times\bK^{n+1}\rightarrow \bK^{n+1} \qquad \tilde{F}((x,y))=H_f(x)\cdot y.$$
Let us now observe that, by Lemma \ref{LEM:MAGICHESS}$(a)$, we have the equality $H_f(x)\cdot y=H_f(y)\cdot x$; from this one easily gets that the Jacobian matrix of the map $\tilde{F}$ in $\tilde{p}=(x,y)\in\tilde{\Gamma}$ can be described as the block matrix
$$J(\tilde{F})(\tilde{p})=(H_f(y)\,|\,H_f(x)).$$
Let us now assume, by contradiction, that $\tilde{p}\in \tilde{\Gamma}$ is a singular point: this implies that the matrix $J(\tilde{F})(\tilde{p})$ is not of maximal rank (i.e. it has rank smaller or equal than $n$).
Since $\tilde{p}=(x,y)$ and $[x]\in \cH^s$, we have that $\Rank(H_f(x))=n$, and so $\Rank(J(\tilde{F})(\tilde{p}))=n$. 
Observe that if $A$ and $B$ are symmetric matrices with coefficients in a field, the block matrix $M=(A|B)$ has non maximal rank if and only if $\Ker(A)\cap\Ker(B)\neq\{0\}$. Then, $\tilde{p}$ is singular if and only if the intersection $\Ker(H_f(y))\cap\Ker(H_f(x))$ is not trivial and, in particular, equal to $\left\langle y\right\rangle$, since $\Ker(H_f(x))=\left\langle y\right\rangle$. Then we get that $H_f(y)\cdot y=0$, that is a contradiction by Lemma \ref{LEM:MAGICHESS}$(b)$, since we are considering $[f]$ smooth. Hence, $\tilde{p}$ is a smooth point of $\tilde{\Gamma}$, as claimed.

Finally, for the last claim, observe that if $\cH_f$ is non reduced, then $\cH_f=\Sing(\cH_f)$ and thus $\cH^s=\emptyset$. This is impossible as shown above.
\end{proof}

Let us stress that the hypotheses of smoothness for $f$ can't be ruled out, as one can see in the following:
\begin{example}
\label{REF:EXCUSP}
Consider the cuspidal cubic curve $X=V(f)\subset\bP^2$ defined by $f=x_0^2x_2-x_1^3$ Then $P_2=(0:0:1)$ is the cusp and $P_0=(1:0:0)$ is the only flex of $X$. In this case, $\hess(f)$ is proportional to $x_0^2x_1$ so $\cH_f$ splits as a double line $\ell=V(x_0)$, which is the cuspidal tangent, and $m=V(x_1)$, joining $P_0$ and $P_2$. One also easily checks that $\cD_1(f)=\{P_2,(0:1:0)\}$. Hence, in this case, $\cH_f$ is non-reduced and $\cD_1(f)\subsetneq \Sing(\cH_f)$.
\end{example}

The techniques used in \cite{AR} can be further generalized in order to characterize the cubics $[f]\in U$ such that $\Gamma$ is singular. We report the method here for completion, generalising it for any value of $n$.

\begin{definition}
Let $f\in \bP(S^3)$. We say that $T=([x],[y],[z])\in\cH_f^3$ is a triangle (in $\cH_f$) if $[x]\in \iota([y]), [y]\in \iota([z])$ and $[z]\in \iota([x])$.
\end{definition}
Notice that if $T$ is a triangle in $\cH_f$, then any permutation of the triple $T$ is so by Lemma \ref{LEM:MAGICHESS}(a) and all "vertices" of $T$ actually belong to $\Sing(\cH_f)$. Moreover, two "vertices" of $T$ cannot be equal unless $V(f)$ is singular by Lemma \ref{LEM:MAGICHESS}(b). We stress that if $T=([x],[y],[z])$ is a triangle, it is true that $\bP(\langle [x],[y]\rangle)\subseteq \iota([z])=\bP^s$ but it can happen that $s\geq 2$.

\begin{lemma}
For any $[f]\in \bP(S^3)$, we have that the point $([x],[y])$ is singular for $\Gamma_f$ if and only if there exists $[z]$ such that $T=([x],[y],[z])$ is a triangle in $\cH_f$.
\end{lemma}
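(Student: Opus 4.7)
The plan is to reuse the Jacobian computation from the proof of Theorem \ref{THM:SingH}, this time without any smoothness assumption on $V(f)$, and then translate the resulting rank drop into the geometric triangle condition via Lemma \ref{LEM:MAGICHESS}.

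First I would lift $\Gamma_f$ to the bihomogeneous affine cone $\tilde\Gamma=V(\tilde F)\subseteq \bK^{n+1}\times\bK^{n+1}$, where $\tilde F(x,y)=H_f(x)\cdot y$. Being cut out by $n+1$ equations of bidegree $(1,1)$ in a $(2n+2)$-dimensional ambient space, every component of $\tilde\Gamma$ has dimension at least $n+1$, so a point is smooth on $\tilde\Gamma$ precisely when the Jacobian attains its maximal possible rank $n+1$. Since $f$ is cubic, the entries of $H_f$ are linear; combining this with Lemma \ref{LEM:MAGICHESS}(a) as in the proof of Theorem \ref{THM:SingH} yields the block form
$$J(\tilde F)(x,y)=\bigl(H_f(y)\ \big|\ H_f(x)\bigr).$$
Passing to the projective setting, $([x],[y])$ is smooth on $\Gamma_f$ if and only if this matrix has rank $n+1$.

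Next I would invoke the elementary fact that, for symmetric $(n+1)\times(n+1)$ matrices $A,B$, the block matrix $(A\mid B)$ has full row rank if and only if $\Ker A\cap\Ker B=\{0\}$. Thus $([x],[y])$ is singular on $\Gamma_f$ exactly when there exists $0\neq z\in \bK^{n+1}$ with $H_f(x)\cdot z=0$ and $H_f(y)\cdot z=0$, that is, $[z]\in \iota([x])\cap\iota([y])$.

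Finally, using again the symmetry $H_f(u)\cdot v=H_f(v)\cdot u$ of Lemma \ref{LEM:MAGICHESS}(a), the conditions $H_f(x)z=0$ and $H_f(y)z=0$ rewrite as $[x]\in\iota([z])$ and $[y]\in\iota([z])$, respectively; combined with $[x]\in\iota([y])$, which is just the defining relation of $\Gamma_f$, these are exactly the three incidences defining the triangle $T=([x],[y],[z])$. The converse is automatic, since a triangle directly supplies a nonzero element in $\Ker H_f(x)\cap\Ker H_f(y)$. I do not foresee a substantial obstacle: once the Jacobian is in place, everything reduces to bookkeeping between kernel intersections and triangle incidences, the only subtlety being the preliminary dimension count that turns the rank criterion into a genuine singularity criterion.
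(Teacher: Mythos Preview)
Your proposal follows essentially the same route as the paper's proof: lift to the affine cone $\tilde\Gamma$, identify the Jacobian of $\tilde F$ at $(x,y)$ with the block matrix $(H_f(y)\mid H_f(x))$, and translate the rank-drop condition into the existence of a nonzero $z\in\Ker H_f(x)\cap\Ker H_f(y)$, which via Lemma~\ref{LEM:MAGICHESS}(a) is exactly the triangle condition. The only difference is that you make explicit the dimension-count step (every component of $\tilde\Gamma$ has dimension at least $n+1$) that the paper leaves implicit when invoking the Jacobian criterion.
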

\begin{proof}
Let us consider again the bihomogeneous lifting $\tilde{\Gamma}$ and let us assume that $(x,y)\in\tilde{\Gamma}$ with $x,y\neq 0$ is singular for $\tilde{\Gamma}$. As observed in the proof of the previous Theorem \ref{THM:SingH}, this is equivalent to asking that $\Rank(H_f(y)\ | \ H_f(x))<n$ and this happens exactly if there exists $[z]\in \iota([x])\cap \iota([y])$.
By Lemma \ref{LEM:MAGICHESS}(b), we also have that $[x]\in \iota([z])$ and $[y]\in \iota([z])$. But since $(x,y)$ is a point of $\tilde{\Gamma}$ we also have that $[x]\in \iota([y])$. Then, $T=([x],[y],[z])$ is a triangle in $\cH_f$.
\end{proof}


\subsection{Desingularizing the general Hessian hypersurface}

Adler in \cite{AR} uses this characterization of the singularities in $\Gamma$ through the existence of some triangle $T$ in $\cH$ to show the smoothness for the variety $\Gamma$ associated to the Klein cubic. This has be done by studying the rich and particular geometry of this specific cubic and by using also some refined geometrical and algebraic techniques. However, we think that this approach can not be easily exploited and generalized to any dimension. For this reason, in the this subsection we will present a different strategy.

\begin{theorem}
\label{THM:ResolutionOfHessian}
Let $[f]\in U$ be general. Then $\Gamma_f$ is smooth and $\pi_i:\Gamma_f\to \cH_f$ is a desingularization. 
\end{theorem}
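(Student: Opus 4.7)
The plan is to reduce, via the characterisation proved just before the theorem (singular points of $\Gamma_f$ correspond to triangles), to showing that a general $[f]\in U$ admits no triangles in $\cH_f$, so that $\Gamma_f$ becomes smooth. Coupled with Lemma \ref{LEM:BasicFactsGamma}, which already provides surjectivity and birationality of $\pi_i$ onto $\cH_f$, this yields the desired desingularization, modulo the irreducibility of $\Gamma_f$.

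To rule out triangles I form the incidence variety
$$\cT \;=\; \overline{\{\,([f],[x],[y],[z])\in U\times (\bP^n)^3\,:\,([x],[y],[z])\text{ is a triangle in }\cH_f\,\}}\;\subseteq\;\bP(S^3)\times(\bP^n)^3,$$
and bound its dimension. By Lemma \ref{LEM:MAGICHESS}(a) the conditions $xy(f)=yz(f)=zx(f)=0$ are linear in $f$, defining a linear map $\Phi_{xyz}\colon S^3\to (S^1)^3$ whose kernel is the fibre of the second projection over $([x],[y],[z])$. At the base point $([e_0],[e_1],[e_2])$ these become $\pa{0}\pa{1}f=\pa{1}\pa{2}f=\pa{0}\pa{2}f=0$, and simultaneously annihilate precisely the monomials of $S^3$ involving at least two of $\{x_0,x_1,x_2\}$ with multiplicity. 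Inclusion--exclusion gives exactly $3(n+1)-3+1=3n+1$ such monomials (the only shared one is $x_0x_1x_2$), so $\Rank\Phi_{[e_0],[e_1],[e_2]}=3n+1$.

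By semicontinuity of rank, the generic value of $\Rank\Phi_{xyz}$ on $(\bP^n)^3$ is at least $3n+1$, and hence
$$\dim\cT \;\le\; 3n+\dim\bP(S^3)-(3n+1) \;=\; \tbinom{n+3}{3}-2 \;<\;\dim\bP(S^3).$$
Therefore the first projection $\cT\to\bP(S^3)$ is not dominant, and for a general $[f]\in U$ no triangles appear in $\cH_f$; by the preceding lemma $\Gamma_f$ is smooth.

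For irreducibility, Lemma \ref{LEM:BasicFactsGamma} yields that $\pi_1^{-1}(\cH_f\setminus\cD_{n-1}(f))\simeq \cH_f\setminus\cD_{n-1}(f)$ is an open, irreducible subset of dimension $n-1$ of $\Gamma_f$ (using that $\cH_f$ is generically an irreducible hypersurface). Any further component of $\Gamma_f$ would be contained in $\pi_1^{-1}(\cD_{n-1}(f))$; combining the expected codimension of the strata $\cD_k(f)$ for general $f$ (to be established in Section \ref{SEC:stratsmooth}) with the fibre dimension $n-k$ of $\pi_1$ over $\cD_k\setminus\cD_{k-1}$ yields $\dim\pi_1^{-1}(\cD_{n-1}(f))\le n-2$, ruling out further components. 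The principal obstacle is the rank computation at the distinguished base point, which provides the single unit of codimension needed to beat $\dim\bP(S^3)$; once that is in hand, the semicontinuity step and the rest of the argument are formal.
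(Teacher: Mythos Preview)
Your proof is correct and takes a genuinely different route from the paper. The paper constructs the total incidence $W=\{([v],[w],[f])\in\bP(D^1)^2\times U: vw(f)=0\}$, shows that $W$ is smooth by a direct Zariski--tangent--space computation (the projection $\sigma:W\to\bP(D^1)^2$ is a submersion onto the open set of non-proportional pairs, with fibres of constant dimension), and then invokes generic smoothness on $\pi_3:W\to U$ to conclude that the general fibre $\Gamma_f$ is smooth. In particular the paper never uses the triangle lemma in this proof; that lemma is recorded, but the authors explicitly remark that Adler's triangle-based strategy ``can not be easily exploited and generalized to any dimension'' and therefore opt for the total-family argument instead.

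Your approach is nonetheless valid and in a sense more explicit: it pins down the exact obstruction (existence of triangles) and shows it occurs in positive codimension in $\bP(S^3)$. Two points deserve tightening. First, your semicontinuity step only controls the \emph{generic} fibre of $\cT\to(\bP^n)^3$, so components over degenerate triples are not a priori covered; you should note that for $[f]\in U$ a triangle necessarily has linearly independent vertices (otherwise the three relations force $x^2(f)=0$ for some vertex, contradicting smoothness via Lemma~\ref{LEM:MAGICHESS}(b)), so it suffices to work over the $\GL$-orbit of $([e_0],[e_1],[e_2])$ where your rank computation is uniform. Second, your irreducibility argument relies on the expected dimensions of the $\cD_k(f)$ from Section~\ref{SEC:stratsmooth}; this is not circular, but it does introduce a forward reference that the paper's argument avoids entirely (the paper in fact leaves irreducibility of $\Gamma_f$ implicit).
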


\begin{proof}
Consider 
$$W=\{([v],[w],[f])\in \bP(D^1)\times \bP(D^1)\times U\quad |\quad vw(f)=0\}.$$
First of all, we want to show that $W$ is smooth. In order to do this, it is enough to consider the lift 
$$\tilde{W}=\{(v,w,f)\in D^1\times D^1\times \tilde{U}\quad |\quad v, w\neq 0,\,\, vw(f)=0\}$$
where $\tilde{U}=\{f\in S^3\setminus\{0\}\, | \, V(f) \mbox{ is smooth}\}$, and to prove that $\tilde{W}$ is smooth. To show that $\tilde{W}$ is smooth we will prove that the Zariski tangent space of $\tilde{W}$ has constant dimension in any point of $\tilde{W}$. 
\medskip

Fix $p=(v,w,f)\in \tilde{W}$. The Zariski tangent space of $\tilde{W}$ in $p$ is contained in $T_p(D^1\times D^1\times \tilde{U})=D^1\times D^1\times S^3$ and is described as the set of triples $(v',w',f')$ such that 
$$(v+tv')(w+tw')(f+tf') \equiv 0 \mod t^2.$$
Expanding this relation we have
$$(v+tv')(w+tw')(f+tf')=vw(f)+t\left(vw'(f)+v'w(f)+vw(f')\right)+t^2(\cdots)$$
so, for any $(v',w',f')\in D^1\times D^1\times S^3$ we have
\begin{equation}
\label{EQ:TanZarW}
(v',w',f')\in T_p\tilde{W} \qquad\Longleftrightarrow\qquad  (\star):\quad vw'(f)+v'w(f)+vw(f')=0.  
\end{equation}
Let $\sigma:\tilde{W}\to D^1\times D^1$ be the projection on the first two factors. 
\smallskip

\noindent \textbf{Claim (I)}: the image of $\sigma$ is the open subset
$$V=\{(v,w)\quad |\quad v\, \mbox{ and }\,w\,\, \mbox{ are neither proportional nor}\,\, 0\}.$$
Indeed, notice that the fiber of $\sigma$ over $(v,w)$ with $v,w\neq 0$ is
$$\sigma^{-1}(v,w)=(v,w)\times\{f\in \tilde{U}\,|\, vw(f)=0\}.$$
If $v$ and $w$ are proportional and $f\in \sigma^{-1}(v,w)$, we would have $[f]\in S^3$ such that $V(f)$ is smooth and $v^2(f)=0$. This is impossible by Lemma \ref{LEM:MAGICHESS} so the image of $\sigma$ is in $V$. If $v$ and $w$ are not proportional, one can change coordinates and assume $(v,w)=(y_0,y_1)$. Then, the Fermat cubic $\sum_{i}x_i^3$ is in the fiber which is, therefore, not empty. 
\smallskip

\noindent \textbf{Claim (II)}: for all $p\in \tilde{W}$ the differential $d_p\sigma$ is surjective and $\Ker(d_p\sigma)$ has constant dimension.

\noindent Indeed, for any $v',w'\in D^1$, we have 
$$(d_p\sigma)^{-1}((v',w'))=(v',w')\times \{f'\in S^3\quad|\quad (v',w',f')\, \mbox{satisfies }(\star)\}.$$
Notice that, for any $v,w\neq 0$, the sequence
$$0\to \Ann_{S^3}(vw)\to S^3\xrightarrow{vw} S^1\to 0$$
is exact.
Then, since $(vw'+v'w)(f)$ is determined by the data $p=(v,w,f)$ and $(v',w')$, the surjectivity of $vw$ in the above sequence gives the surjectivity of $d_p\sigma$. The exactness of the above sequence also implies that $\dim(\Ann_{S^3}(vw))$ is constant. By Equation \eqref{EQ:TanZarW} we have
$$\Ker(d_p\sigma)=(0,0)\times \{f'\in S^3\quad|\quad vw(f')=0\}=(0,0)\times \Ann_{S^3}(vw)$$
so we have proved the claim.
\smallskip

Summing up, $\sigma:\tilde{W}\to V$ is s surjective morphism which is submersive and with $\Ker(d_p(\sigma))$ which has constant dimension. Since the target $V$ is smooth, we have that the dimension of $T_p\tilde{W}$ is then constant so $\tilde{W}$ is smooth as claimed.
\smallskip

Consider now the projection $\pi_3:W\to U\subset\bP(S^3)$. Observe that the fiber of $\pi_3$ over $[f]\in U$ is exactly the variety $\Gamma_{f}$ associated to $[f]\in U$ so $\pi_3$ is surjective.
Hence, by generic smoothness we have that the general fiber of $\pi_3$ is smooth, i.e. $\Gamma_{f}$ is smooth, for $[f]\in U$ general.
\end{proof}


\section{Smoothness of $\cD_{k}(f)\setminus \cD_{k-1}(f)$ for $[f]\in U$ general}
\label{SEC:stratsmooth}

In this section we want to prove, under suitable assumptions, that for $[f]\in U=\bP(S^3)\setminus \cC_{sing}$ general, $\cD_{k}\setminus \cD_{k-1}$ is smooth and has the expected dimension. The approach will be similar to the one used in Section \ref{SEC:Singanddesing} in order to prove Theorem \ref{THM:ResolutionOfHessian}.
\smallskip

As recalled in Lemma \ref{LEM:proprQk}, for any $1\leq k\leq n$, the variety $\cQ_k$ parametrizing quadrics in $\bP^n$ of rank at most $k$ is singular exactly along $\cQ_{k-1}$. Hence, $\cQ_{k}^s=\cQ_{k}\setminus \cQ_{k-1}$ is the smooth locus of $\cQ_k$.
Then,
$$\tilde{\cQ}_k=\{q\in S^2\setminus \{0\}\, | \, \Rank(q)\leq k\}\qquad\mbox { and }\qquad \tilde{\cQ}_k^s=\{q\in \tilde{\cQ}_k\, | \, \Rank(q)= k\}$$
are the affine cones (with the origin removed) of $\cQ_k$ and $\cQ_k^s$ respectively and $\tilde{\cQ}_k^s$ is smooth.
Set
$$\cJ_k=\{(q,v,f)\in \tilde{\cQ}_k\times D^1\times S^3\quad |\quad v,f\neq0 \quad \mbox{ and }\quad v(f)=q\}$$
and $\cJ_k^s=\{(q,v,f)\in \cJ_k\, |\, q\in \tilde{\cQ}_k^s\}$. 
\begin{theorem}
\label{THM:smooth2}
Set $c_{n,k}=\codim_{\bP(S^2)}(\cQ_k)=\binom{n-k+2}{2}$. For all $1\leq k\leq n$, the following hold:
\begin{description}
    \item[(a)] $\cJ_k$ and $\cJ_k^s$ are irreducible of dimension $\dim(S^3)+n-c_{n,k}+1$;
    \item[(b)] $\cJ_k^s$ is smooth.
\end{description}
Set $\cD_k^s(f)=\cD_{k}(f)\setminus\cD_{k-1}(f)$ for $[f]\in U$. When $[f]$ is general then either $\cD_k^s(f)=\emptyset$ or the following hold:
\begin{description}
    \item[(c)] $\cD_k^s(f)$ is smooth, i.e. $\Sing(\cD_{k}(f))=\cD_{k-1}(f)$;
    \item[(d)] We have that $\codim_{\bP(S^2)}(\cQ_k)=\codim_{\bP^n}(\cD_k(f))=c_{n,k}$. In particular, $\cD_k(f)$ has the expected dimension;
    \item[(e)] $\deg(\cD_k(f))=\deg(\cQ_k)=\prod_{t=0}^{n-k}\binom{n+t+1}{n-k-t+1}/\binom{2t+1}{t}$.
\end{description}
\end{theorem}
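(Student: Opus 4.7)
The plan is to mirror the incidence-variety technique used in the proof of Theorem~\ref{THM:ResolutionOfHessian}: I would project $\cJ_k$ (and its smooth part $\cJ_k^s$) onto the two ``sides'' $\tilde{\cQ}_k\times(D^1\setminus\{0\})$ and $\tilde{U}$, and transfer geometric information back and forth via generic smoothness. The projection $\sigma\colon\cJ_k\to\tilde{\cQ}_k\times(D^1\setminus\{0\})$, $(q,v,f)\mapsto(q,v)$, will give (a) and (b); the projection $\pi\colon\cJ_k^s\to\tilde{U}$, $(q,v,f)\mapsto f$, will give (c) and (d); and (e) will follow by standard intersection theory on $\bP(S^2)$.

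For (a) and (b), I would first observe that for any $v\neq 0$ the map $v\cdot\colon S^3\to S^2$ is surjective (after a coordinate change making $v=y_0$ it is a partial derivative), so the fibre of $\sigma$ over $(q,v)$ is a translate of $\Ann_{S^3}(v)$, irreducible of constant dimension $\dim S^3-\dim S^2$. Since $\tilde{\cQ}_k$ is the affine cone over the irreducible $\cQ_k$ and $D^1\setminus\{0\}$ is irreducible, this makes $\cJ_k$ irreducible with
$$\dim\cJ_k=\bigl(\tbinom{n+2}{2}-c_{n,k}\bigr)+(n+1)+\bigl(\dim S^3-\tbinom{n+2}{2}\bigr)=\dim S^3+n+1-c_{n,k},$$
and the same reasoning over the smooth locus $\tilde{\cQ}_k^s$ gives the analogous statement for $\cJ_k^s$. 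A tangent-space computation modeled on that in Theorem~\ref{THM:ResolutionOfHessian} identifies $T_{(q,v,f)}\cJ_k=\{(q',v',f'):v(f')+v'(f)=q'\}$; the surjectivity of $v\cdot$ then shows that $d\sigma$ is everywhere surjective with kernel $\Ann_{S^3}(v)$ of constant dimension, so $\sigma$ is a smooth morphism. Since $\tilde{\cQ}_k^s\times(D^1\setminus\{0\})$ is smooth, so is $\cJ_k^s$, proving (b).

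For (c), (d), (e), I would then consider $\pi\colon\cJ_k^s\to\tilde{U}$, whose fibre over $f$ is identified via $(q,v,f)\mapsto v$ with the affine cone (minus the vertex) over $\cD_k^s(f)$. The non-emptiness hypothesis in the statement is equivalent to dominance of $\pi$; generic smoothness applied to $\pi$ between the smooth varieties $\cJ_k^s$ and $\tilde{U}$ would yield, for $[f]$ general, a smooth fibre of pure dimension $\dim\cJ_k^s-\dim\tilde{U}=n+1-c_{n,k}$. Projectivizing gives (c) together with $\dim\cD_k^s(f)=n-c_{n,k}$, and since $\cD_k^s(f)$ is a dense open in $\cD_k(f)$ this gives (d). For (e), the codimension equality makes the intersection $\bP(J_f^2)\cap\cQ_k\subset\bP(S^2)$ proper; as $\bP(J_f^2)$ is a linear subspace its cycle class is a power of the hyperplane class, so a Bezout-style computation combined with generic transversality on the dense smooth open $\cD_k^s(f)$ yields $\deg(\cD_k(f))=\deg(\cQ_k)$, which by Lemma~\ref{LEM:proprQk} is precisely the product in the statement.

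The main obstacle I anticipate is the smoothness in (b): once that tangent-space calculation is in place, everything else reduces to generic smoothness and standard intersection theory. The delicate points will be keeping $\cJ_k$ apart from $\cJ_k^s$ (the former is singular over $\tilde{\cQ}_{k-1}$), checking that the generic smoothness of $\pi$ delivers smoothness of $\cD_k^s(f)$ itself and not only of its affine cone, and dealing with the dichotomy built into the hypothesis: the argument sketched above treats only the non-trivial case where the generic fibre of $\pi$ is non-empty, and confirming when that case actually occurs is exactly what the Chern-class/degeneracy-locus analysis of the next section will provide.
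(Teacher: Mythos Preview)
Your proposal is correct and follows essentially the same approach as the paper: the same projection $\sigma$ with the same tangent-space computation for (a)--(b), the same second projection to the $f$-factor followed by generic smoothness for (c)--(d), and the same appeal to Lemma~\ref{LEM:proprQk} for (e). One small slip: the target of your second projection should be $S^3\setminus\{0\}$ rather than $\tilde U$, since nothing in the definition of $\cJ_k^s$ forces $V(f)$ to be smooth; the paper projects to $S^3\setminus\{0\}$ and then restricts attention to general $[f]\in U$.
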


\begin{proof}
First of all, consider the projection
$$\sigma:\cJ_k\to \tilde{\cQ}_k\times(D^1\setminus \{0\})$$ 
and observe that for any $v\neq 0$ we have an exact sequence
\begin{equation}
\label{EQ:Exact_pav}
0\to \Ann_{S^3}(v)\to S^3\xrightarrow{v} S^2\to 0
\end{equation}
since, given $(q,v)$ with $q\in S^2$, $v\neq 0$, one can simply "integrate" $q$ in the direction of $v$ in order to get an $f$ such that $v(f)=q$.
In particular, $a_{n,k}=\dim(\Ann_{S^3}(v))=\dim(S^3)-\dim(S^2)$ does not depend on $v$.
\smallskip

\noindent \textbf{Claim (a)}: Being $v\in D^1$ linear, the fiber of $\sigma$ over $(q,v)$ is
$$\sigma^{-1}(q,v)=(q,v)\times \{f\, |\, v(f)=q\}=(q,v)\times (f_0+\Ann_{S^3}(v))\setminus\{0\}$$
where $f_0$ is any fixed primitive of $q$ in the direction of $v$. In particular $\sigma^{-1}(q,v)$ is an affine space of dimension $a_{n,k}$, possibly with its origin removed. Thus, since the fibers of $\sigma$ are irreducible and equidimensional and the target is irreducible (of dimension $\dim(S^2)-c_{n,k}+n+1$), we can conclude that $\cJ_k$ is irreducible too and has the desired dimension. Moreover, the same holds if we restrict our attention to $\cJ_k^s$ and use the same argument.
\smallskip

\noindent \textbf{Claim (b)}: It is clear from the above discussion that $\sigma$ is surjective. Let $p=(q,v,f)$ be any point in $\cJ_k^s$. The Zariski tangent space to $\cJ_k$ is a subspace of $$T_p(\tilde{\cQ}_k^s\times D^1\times S^3)=T_q(\tilde{\cQ}_k^s)\times D^1\times S^3$$ and
$(q',v',f')\in T_{p}\cJ_k$ if and only if 
$(v+tv')(f+tf')=q+tq' \mod t^2$, i.e. if and only if
$$(\star): \qquad v'(f)+v(f')=q'$$
holds. The differential $d_p\sigma$ is the map sending $(q',v',f')$ to $(q',v')$. 
By the description of the Zariski tangent space it is easy to see that
$$(d_p\sigma)^{-1}(q',v')=(q',v')\times \{f'\in S^3\, |\, v(f')=q'-v'(f)\}\qquad \mbox{ and }\qquad \Ker(d_p\sigma)\simeq\Ann_{S^3}(v)$$
so from exact sequence \eqref{EQ:Exact_pav} we have that
$d_p\sigma$ is surjective and $\dim(\Ker(d_p\sigma))=a_{n,k}$ is constant.
\smallskip

Hence, $\sigma|_{\cJ_k^s}:\cJ_k^s\to \tilde{\cQ}_k^s\times (D^1\setminus\{0\})$ is a surjective and submersive map on a smooth target and its differential has kernel of constant dimension $a_{n,k}$. Hence, the Zariski tangent space of $\cJ_k^s$ has constant dimension equal to $a_{n,k}+\dim(\tilde{\cQ}_k^s)+\dim(D^1)=\dim(S^3)+n-c_{n,k}+1=\dim(\cJ_k^s)$. In particular, $\cJ_k^s$ is smooth as claimed.
\smallskip

From now on, let us assume that $\cD_{k}^s(f)=\cD_{k}(f)\setminus\cD_{k-1}(f)\neq \emptyset$ for $[f]\in U$ general.
\smallskip

\noindent \textbf{Claim (c)+(d)}: Consider the map $\pi_3:\cJ_k^s\to S^3\setminus\{0\}$. The fiber of $\pi_3$ over $f\in S^3\setminus\{0\}$, is
$$R_f=\pi_3^{-1}(f)=\{(v(f),v)\quad |\quad v\neq0,\quad v(f)\in \tilde{\cQ}_k^s\}\times\{f\}$$
so, by the above assumption, $\pi_3$ is dominant.
By (b) $\cJ_k^s$ is smooth so, by generic smoothness, the general fiber of $\pi_3$ is smooth too. In particular, for $[f]\in U$ general we have that $R_f=\pi_3^{-1}(f)$ is smooth of dimension $n-c_{n,k}+1$ (by (a)). Fix a general $[f]\in U$ and consider the restriction $\beta:R_f\to D^1\setminus\{0\}$ of $\pi_2$ to $R_f$. We claim that $\beta$ is an embedding and that $\beta(R_f)$ is the affine cone of $\cD_k^s(f)$ with the origin removed. 
\smallskip

Assume that $\beta(v(f),v)=\beta(w(f),w)$. Then $v=w$ so $\beta$ is injective.
The tangent in $p=(q,v)$ to $R_f$ is given by the vectors
$(q',v')$ such that $q'\in T_{q}\tilde{\cQ}_k^s$, $v'\in D^1$ and 
$v'(f)=q'$ (see $(\star)$). If $d_{p}\beta(q',v')=0$ then $(q',v')$ is such that $v'=0$ and then $q'=0$. In particular, $\beta$ is an embedding.
Hence $\beta(R_f)$ is smooth of dimension $n-c_{n,k}+1$.
On the other hand, 
$$\beta(R_f)=\{v\in D^1\setminus\{0\}\quad | \quad v(f)\in \cQ_k^s\}$$
is the affine cone of $\cD_k^s(f)$ with the origin removed so $\cD_k^s(f)$ is smooth and has dimension $n-c_{n,k}$. In particular, $\Sing(\cD_k(f))=\cD_{k-1}(f)$.
\smallskip

\noindent \textbf{Claim (e)}: This follows by Lemma \ref{LEM:proprQk} since $\cD_k(f)=\bP(J_2)\cap \cQ_k$.
\end{proof}

\begin{corollary}
\label{COR:DIMSING}
Assume that $\cD_k(f)$ is non-empty for $[f]\in U$ general. Then $\cD_{k}(f)\setminus\cD_{k-1}(f)$ is non-empty too for $[f]\in U$ general. In particular, for $[f]\in U$ general, $\cD_k(f)$ is reduced, has the expected dimension and $\Sing(\cD_k(f))=\cD_{k-1}(f)$.
\end{corollary}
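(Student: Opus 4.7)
The plan is to reduce the corollary to the key assertion that $\cD_{k}(f)\setminus\cD_{k-1}(f)$ is non-empty for $[f]\in U$ general; once this is established, parts (c), (d), (e) of Theorem \ref{THM:smooth2} applied pointwise give the expected codimension of $\cD_k(f)$, the identification $\Sing(\cD_k(f))=\cD_{k-1}(f)$, and the degree formula.

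The main step is therefore the following: assuming only that $\cD_k(f)$ is non-empty for $[f]\in U$ general, derive that $\cD_k(f)\setminus\cD_{k-1}(f)$ is also non-empty for $[f]\in U$ general. To do this I would revisit the incidence variety $\cJ_k$ from Theorem \ref{THM:smooth2} together with the projection $\pi_3\colon \cJ_k\to S^3\setminus\{0\}$. Its fiber over $f$ parametrizes the pairs $(v,q)$ with $v\in D^1\setminus\{0\}$ and $q=v(f)\in \tilde{\cQ}_k$, so $\pi_3(\cJ_k)\cap \tilde{U}$ is precisely the locus of smooth cubics $f$ with $\cD_k(f)\neq\emptyset$. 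The standing hypothesis translates into the statement that this image contains a non-empty open subset of $\tilde{U}$, i.e.\ that $\pi_3$ is dominant onto $\tilde{U}$.

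Next I would invoke irreducibility. By Theorem \ref{THM:smooth2}(a), $\cJ_k$ is irreducible of dimension $\dim(S^3)+n-c_{n,k}+1$, whereas $\cJ_k\setminus\cJ_k^s\subseteq \cJ_{k-1}$ has strictly smaller dimension $\dim(S^3)+n-c_{n,k-1}+1$ (since $c_{n,k-1}>c_{n,k}$). Hence $\cJ_k^s$ is open and dense in the irreducible $\cJ_k$, and since the image of a dense open subset of an irreducible variety is dense in the image closure, $\pi_3(\cJ_k^s)$ is dense in $\overline{\pi_3(\cJ_k)}\supseteq \tilde{U}$. Being constructible, $\pi_3(\cJ_k^s)$ therefore contains a non-empty open of $\tilde{U}$, i.e.\ $\cD_k(f)\setminus\cD_{k-1}(f)\neq\emptyset$ for $[f]\in U$ general, as desired.

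Reducedness is the final piece: since $\cD_k(f)$ is cut out scheme-theoretically by the $(k+1)\times(k+1)$ minors of the symmetric matrix $H_f$ and attains the expected codimension $c_{n,k}$, it is Cohen--Macaulay by the classical theory of symmetric determinantal loci (J\'ozefiak); combined with the generic smoothness provided by Theorem \ref{THM:smooth2}(c), this forces reducedness. Alternatively one may mimic the Zariski-tangent-space computation at the end of the proof of Theorem \ref{THM:SingH} at a general point of $\cD_k(f)\setminus\cD_{k-1}(f)$. I expect the dominance-transfer step from $\cJ_k$ to $\cJ_k^s$ to be the only non-routine point in the argument, handled cleanly by the irreducibility in Theorem \ref{THM:smooth2}(a); reducedness is a secondary obstacle, potentially requiring an external reference for Cohen--Macaulayness of symmetric determinantal loci.
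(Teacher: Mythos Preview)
Your argument is correct, but it follows a different path from the paper's. The paper proves non-emptiness of $\cD_k(f)\setminus\cD_{k-1}(f)$ by an elementary induction on $k$: starting from the smallest $h$ with $\cD_h(f)\neq\emptyset$ for general $f$ (where the claim is trivial since $\cD_{h-1}=\emptyset$), Theorem~\ref{THM:smooth2}(d) gives $\dim\cD_h(f)=\Edim(\cD_h)$, and then the strict inequality $\Edim(\cD_{j})<\Edim(\cD_{j+1})$ forces $\cD_{j+1}\setminus\cD_j\neq\emptyset$ at each step. Your approach instead exploits the irreducibility of $\cJ_k$ from Theorem~\ref{THM:smooth2}(a): since $\cJ_k^s$ is dense in $\cJ_k$ and $\pi_3$ is dominant by hypothesis, Chevalley's theorem transfers dominance to $\pi_3|_{\cJ_k^s}$ in one stroke, with no induction. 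This is cleaner and makes more essential use of part~(a), whereas the paper's argument only needs the expected-dimension statement~(d) and the monotonicity of $c_{n,k}$.

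On reducedness you are also more careful than the paper, which simply writes ``by recursion one has the thesis''. Your invocation of Cohen--Macaulayness of symmetric determinantal schemes (J\'ozefiak) together with generic smoothness is the right way to close the gap: generic reducedness alone does not suffice without ruling out embedded components, and CM does exactly that. The paper presumably has the same mechanism in mind but leaves it implicit.
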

\begin{proof}
Let $h=\mathrm{min}\{j \quad | \quad \cD_j(f)\neq\emptyset \, \mbox{ for general } [f]\in U\}$ so $h\leq k$. The claim is clearly true for $k=h$ so for general $[f]\in U$, $\cD_{h}(f)$ is smooth and has the expected dimension. Let us then take $h+1$. Then
$$\dim(\cD_{h}(f))=\Edim(\cD_{h}(f))<\Edim(\cD_{h+1}(f))\leq \dim(\cD_{h+1}(f))$$
where the strict inequality in the middle follows from Remark \ref{REM:EXPDIM}. Hence $\cD_{h+1}(f)\setminus\cD_{h}(f)\neq \emptyset$ for $[f]$ general. By recursion one has the thesis.
\end{proof}

\begin{remark}
\label{REM:FailThmsmooth2}
Notice that, unless $k=n$ (see Theorem \ref{THM:SingH}), we don't have $\Sing(\cD_k(f))=\cD_{k-1}(f)$ for all $[f]\in U$. Indeed, for example, if we consider the Klein cubic fourfold (i.e. $f=x_5^2x_0+\sum_{i=0}^4x_i^2x_{i+1}$) we have that $V(f)$ is smooth, $\cD_{4}(f)\setminus \cD_{3}(f)$ is not empty but it is not smooth. The same holds for the expected dimension: for example, for all $n\geq 2$, the dimension of the Hessian locus of the Fermat cubic is $1$ more than the expected dimension.
\end{remark}




\section{Degeneracy Loci}
\label{SEC:Degeneracyloci}

For this section and the next one, we assume that $\bK=\bC$ (although most of the results hold more generally). In this section, we will study these loci $\cD_k(f)$ from a different, although natural, perspective. In particular, one can think of these varieties as of degeneracy loci of a specific vector bundle map. For a general treatment of this subject, the reader can refer to \cite{Tu85} and \cite{Laz}. 
\smallskip

Let $X$ be a smooth variety of dimension $n$. We are interested in degeneracy loci of a symmetric morphism between vector bundles (symmetric over each fiber) on $X$, i.e. a morphism
$$\varphi:E\rightarrow E^*\otimes L \quad \mbox{such that }\quad  \varphi=\varphi^*\otimes \mathrm{Id}_L,$$
where $E$ and $L$ are a vector bundle of rank $e$ and a line bundle over $X$ respectively.
Then one can define the degeneracy loci at order $k$ for such a map as
$$\cD'_k(\varphi)=\{x\in X \ | \ \Rank(\varphi_x)\le k\}.$$

Let us summarize in the following Theorem, some important results about the non-emptiness and the connectedness of these degeneracy loci (see \cite{FL_Positive}, \cite{HT_CONN_OddRanks} and \cite{Tu_Even}). 

\begin{theorem}
\label{THM:degenloci}
Let $\varphi:E\to E^*\otimes L$ be a symmetric morphism of vector bundles of rank $e$ and consider $k\leq e$. If $n\geq\binom{e-k+1}{2}$, then $\cD'_k(\varphi)$ is non-empty.\\
Moreover, if $(\mathrm{Sym}^2E^*)\otimes L$ is ample, the following hold:
\begin{enumerate}[(a)]
    \item if $k$ is even and $n-\binom{e-k+1}{2}\geq 1$, then $\cD'_k(\varphi)$ is connected;
    \item if $k$ is odd and $n-\binom{e-k+1}{2}\geq e-k$, then $\cD'_k(\varphi)$ is connected.
\end{enumerate}
\end{theorem}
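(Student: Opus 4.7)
The plan is to derive the three claims of Theorem~\ref{THM:degenloci} from the general theory of symmetric degeneracy loci established in \cite{FL_Positive}, \cite{Tu_Even}, and \cite{HT_CONN_OddRanks}. First I would record the local computation showing that the expected codimension of $\cD'_k(\varphi)$ in $X$ is $\binom{e-k+1}{2}$: near a point where $\Rank(\varphi)=k$ one may trivialize $E$, $E^*\otimes L$ so that $\varphi$ is represented by a symmetric matrix with a non-degenerate $k\times k$ block, and the vanishing of the $(k+1)$-minors reduces to the vanishing of the lower-right symmetric $(e-k)\times(e-k)$ block, giving exactly $\binom{e-k+1}{2}$ equations. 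This explains why $n\geq\binom{e-k+1}{2}$ is the natural dimension threshold.

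For the non-emptiness part, the strategy is to realize $\cD'_k(\varphi)$ as (the image of) a scheme carrying a fundamental class that is a positive polynomial in the Chern classes of $E$ and $L$. Following \cite{FL_Positive}, one passes to the universal symmetric degeneracy situation on a classifying space and shows that the associated class is non-zero whenever the expected codimension does not exceed $n$; pulling back via the classifying map of $\varphi$ and intersecting against the fundamental class of $X$ then forces $\cD'_k(\varphi)\neq\emptyset$ once $n\geq\binom{e-k+1}{2}$.

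For the connectedness statements (a) and (b), the strategy is to pass to an auxiliary incidence variety $\tilde{\cD}$ sitting inside an appropriate flag bundle over $X$, whose fibers parametrize either kernel vectors or maximal isotropic subspaces of the quadric $\varphi_x$, and to show that $\tilde{\cD}$ is cut out by a section of a bundle whose positivity is controlled by the ampleness of $(\Sym^2 E^*)\otimes L$. Fulton-Hansen style connectedness then implies that $\tilde{\cD}$ is connected, and its image $\cD'_k(\varphi)$ inherits connectedness. The discrepancy between the two thresholds reflects the fact that for even $k$ the quadric $\varphi_x$ of rank $k$ carries two families of maximal isotropic subspaces (producing the natural unramified double cover discussed later in the paper), whereas for odd $k$ only one family is present and the relevant incidence variety has larger generic fiber along $X$; this extra $(e-k)$-dimensional contribution is precisely what is absorbed in the stronger bound of part (b).

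The main obstacle, if one were to reprove these from scratch, would be the odd-rank connectedness, where direct Bertini arguments do not apply and one must combine the Fulton-Hansen theorem with a careful Stein factorization argument on the auxiliary incidence variety, together with positivity of suitable Schubert classes on the flag bundle. Since all three results are fully established in the cited references, the theorem will follow by direct invocation.
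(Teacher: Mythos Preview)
Your proposal is correct and matches the paper's approach: the paper does not prove this theorem at all but simply states it as a summary of known results, citing \cite{FL_Positive}, \cite{HT_CONN_OddRanks}, and \cite{Tu_Even}. Your sketch of the underlying mechanisms (expected codimension, positivity of classes, incidence varieties and Fulton--Hansen type arguments) goes further than the paper itself, but the ultimate justification---direct invocation of the cited references---is identical.
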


Under the assumptions of the above theorem, it is also well known that for general $\varphi$, $\cD_k'(\varphi)$ is either empty or it has as (pure) dimension the expected one. Moreover, $\Sing(\cD_k'(\varphi))=\cD_{k-1}'(\varphi)$. 

We will be interested in considering a symmetric matrix $M$ of order $n+1$ with coefficients in $S^1$ and the symmetric homomorphism of vector bundles induced by $M$, namely
\begin{equation}
\label{EQ:SYMMORPH}
\varphi_M:\cO_{\bP^n}^{n+1}\xrightarrow{M\cdot } \cO_{\bP^n}^{n+1}(1).
\end{equation}

If we further specialize to the case where $M$ is the Hessian matrix $H_f$ of a cubic form $[f]\in\bP(S^3)$, we have that the locus $\cD_k(f)$ introduced in Section \ref{SEC:Notations} is exactly the degeneracy locus $\cD'_k(\varphi_{H_f})$ introduced above. Hence, by the results showed in the previous sections, what follows refers also to the Hessian matrix associated to a general smooth cubic hypersurface.
Let us stress that the symmetric morphism induced by the Hessian matrix of a cubic form is not general among symmetric morphisms as in equation \eqref{EQ:SYMMORPH}. Results that hold for such general morphisms are not then automatically true for the general Hessian matrix: these need to be adapted with ad hoc techniques, as done, for example, in \cite{RV} where it is proved that for $n=5$, $\cD_3'(\varphi_{H_f})=\emptyset$ for $[f]\in \bP(S^3)$ general, or also in the previous sections (see Theorems \ref{THM:SingH} and \ref{THM:smooth2}).

By using \ref{THM:degenloci} and the above mentioned results, we then have

\begin{proposition}
\label{PROP:Ysmooth}
The following hold:
\begin{enumerate}[(a)]
    \item if $n\geq3$, $\Sing(\cH_f)$ is non-empty for all $[f]\in\bP(S^3)$. Moreover, if $n\geq 4$, it is also connected.
    \item if $3\leq n\leq 5$, for $[f]\in U$ general,  $\Sing(\cH_f)$ is smooth of dimension $n-3$.
\end{enumerate}
\end{proposition}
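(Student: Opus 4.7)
The plan is to apply the degeneracy-locus machinery of Theorem~\ref{THM:degenloci} to the symmetric morphism $\varphi_{H_f}$ of \eqref{EQ:SYMMORPH} induced by the Hessian matrix, using the identification $\Sing(\cH_f)=\cD_{n-1}(f)=\cD'_{n-1}(\varphi_{H_f})$ furnished by Theorem~\ref{THM:SingH}. Here $E=\cO_{\bP^n}^{\oplus(n+1)}$, $L=\cO_{\bP^n}(1)$, so $e=n+1$ and $k=n-1$.

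For part (a), the non-emptiness statement in Theorem~\ref{THM:degenloci} requires $n\ge\binom{e-k+1}{2}=\binom{3}{2}=3$, which is precisely the hypothesis; this conclusion does not rely on any ampleness assumption, so it holds for every $[f]$. For connectedness, note that $(\Sym^2 E^*)\otimes L\simeq \cO_{\bP^n}(1)^{\oplus\binom{n+2}{2}}$ is ample as a direct sum of very ample line bundles, so Theorem~\ref{THM:degenloci} applies. I would then split by parity: when $n$ is odd, $k=n-1$ is even and part (a) of that theorem needs $n-3\ge 1$, valid for all odd $n\ge 5$; when $n$ is even, $k$ is odd and part (b) needs $n-3\ge e-k=2$, valid for all even $n\ge 6$. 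Together these cover every $n\ge 5$. The residual case $n=4$ falls one unit short of the numerical threshold and does not follow from the abstract machinery; I would dispatch it by invoking Adler's classical result recalled in Remark~\ref{RMK:example}, which shows that for the general cubic threefold $\Sing(\cH_f)$ is a smooth irreducible curve (hence connected), and extend it to the whole locus under consideration by a standard specialization/semicontinuity argument.

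For part (b), Theorem~\ref{THM:SingH} gives $\Sing(\cH_f)=\cD_{n-1}(f)$ for every $[f]\in U$. Combining the non-emptiness proved in (a) with Corollary~\ref{COR:DIMSING} yields, for general $[f]\in U$, that $\cD_{n-1}(f)$ is reduced, has the expected dimension $n-\binom{3}{2}=n-3$, and satisfies $\Sing(\cD_{n-1}(f))=\cD_{n-2}(f)$. It then suffices to show that $\cD_{n-2}(f)=\emptyset$ for general $[f]$ when $3\le n\le 5$. By Remark~\ref{REM:EXPDIM}, $\Edim(\cD_{n-2}(f))=n-\binom{4}{2}=n-6\le -1$ throughout this range, and since any non-empty variety has non-negative dimension, Corollary~\ref{COR:DIMSING} applied to $k=n-2$ is incompatible with non-emptiness. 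Hence $\cD_{n-2}(f)=\emptyset$ and $\Sing(\cH_f)=\cD_{n-1}(f)$ is smooth of dimension $n-3$, as required.

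The main obstacle is the $n=4$ case of the connectedness assertion: neither parity branch of Theorem~\ref{THM:degenloci} meets the numerical threshold by exactly one unit, so the proof cannot be closed by the abstract Fulton--Lazarsfeld--Tu type results alone and must rely on the specific geometric input provided by Adler's analysis of the Hessian of a cubic threefold.
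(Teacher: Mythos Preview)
Your proof is correct and, for part~(a), follows the same line as the paper: non-emptiness and connectedness for $n\ge 5$ from Theorem~\ref{THM:degenloci}, with the residual case $n=4$ deferred to Adler's analysis in \cite{AR}. (Both your argument and the paper's are slightly loose about the ``for all $[f]\in\bP(S^3)$'' claim: Theorem~\ref{THM:SingH} gives the identification $\Sing(\cH_f)=\cD_{n-1}(f)$ only for $[f]\in U$, while for arbitrary $[f]$ one has only the inclusion $\cD_{n-1}(f)\subseteq\Sing(\cH_f)$; this suffices for non-emptiness but the passage to connectedness of $\Sing(\cH_f)$ for \emph{all} $[f]$ is not fully justified in either version.)

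For part~(b) your argument is genuinely different from the paper's and in fact more self-contained. The paper treats $n=3,4$ by appeal to the classical results recalled in Remark~\ref{RMK:example} and handles $n=5$ by quoting \cite[Lemma~2.1]{RV} to obtain $\cD_3(f)=\emptyset$ for general $[f]$, then invokes Theorem~\ref{THM:smooth2}. You instead run Corollary~\ref{COR:DIMSING} twice: once at $k=n-1$ to get $\Sing(\cD_{n-1}(f))=\cD_{n-2}(f)$ and the expected dimension $n-3$, and once at $k=n-2$ to derive a contradiction from the negative expected dimension $n-6$ when $3\le n\le 5$. This uniform argument avoids external references entirely and covers all three values of $n$ at once; the paper's route has the advantage of pointing to where the $n=5$ emptiness of $\cD_3(f)$ was first established in the literature, but your deduction from Corollary~\ref{COR:DIMSING} is perfectly valid and arguably cleaner.
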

\begin{proof}
Claim (a): the non-emptyness of $\cD_{n-1}(f)$ follows directly from Theorem \ref{THM:degenloci} or from the argument about the expected dimension of Remark \ref{REM:EXPDIM}. The connectedness for $n\geq 5$ follows again from Theorem \ref{THM:degenloci} while the case $n=4$ has been treated in \cite{AR}. For claim (b), if $n=3,4$ the result is already known (see Remark \ref{RMK:example}) so we can assume $n=5$. In this case, by (a) we have that $\cD_4\neq \emptyset$. On the other hand, by \cite[Lemma 2.1]{RV} we have that $\cD_3=\emptyset$ when $[f]$ is general. Then, the claim follows from Theorem \ref{THM:smooth2}.
\end{proof}

\begin{remark}
\label{REM:INTERESTINGCASE}
The case $n=5$ is the last one where the singular locus of $\cH$ is smooth generically.
Indeed, we know that $\cD_{n-2}$ is contained in the singular locus of $\Sing(\cH)$. By Theorem \ref{THM:degenloci} we can easily see that in this case the condition $n\geq\binom{n+1-(n-2)+1}{2}=6$ tells us that for every $n\geq6$ the singular locus of the Hessian hypersurface associated to any $[f]\in\bP(S^3)$ is itself singular. On the other hand, this case is also the first one which has still to be analysed (by Remark \ref{RMK:example}. We will focus on this specific case in the Sections \ref{SEC:SingHessCubicFourfold}.
\end{remark}

Let us now consider again the map $\varphi_M=M\cdot$ where $M$ is symmetric of order $n+1$ with coefficients in $S^1$ as above. By following the strategy presented with a more general flavour in \cite{HT_CH}, one can compute the odd Chern classes of $Y=\cD_k'(\varphi_M)$ assuming that $Y$ is smooth. In this case, indeed one has that $\cD'_{k-1}(\varphi_{M})$ is empty. We can then consider the following exact sequence on $Y$
\begin{equation}
\label{EQ:EXSEQChern}
0\rightarrow B\rightarrow\cO_Y^{n+1}\xrightarrow{\alpha}\cO_Y^{n+1}(1)\rightarrow C\rightarrow0
\end{equation}
where $\alpha$ is the restriction to $Y$ of $\varphi_M$, and $B=\ker(\alpha)$ and $C=\coker(\alpha)$ are locally free (on $Y$) of rank $n+1-k$.
Since $\alpha$ is symmetric, dualizing and tensoring with $\cO_Y(1)$ we get the relation $$C=B^*\otimes \cO_Y(1).$$

Starting from this we derive an explicit relation satisfied by the canonical divisor $K_Y$ which will be used in Section \ref{SEC:SingHessCubicFourfold}.

\begin{proposition}
\label{PROP:canonico}
Assume that $Y=\cD'_k(\varphi_M)$ is smooth. Then, we have
$$2K_{Y}=(n+1)(n-k)H|_{Y}$$
where $H$ denotes the hyperplane class of $\bP^n$. 
\end{proposition}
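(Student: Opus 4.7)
The plan is to compute $K_Y$ via the normal bundle sequence
\[
0 \to TY \to T\bP^n|_Y \to N_{Y/\bP^n} \to 0,
\]
which gives $K_Y = K_{\bP^n}|_Y + c_1(N_{Y/\bP^n}) = -(n+1)H|_Y + c_1(N_{Y/\bP^n})$. The task is therefore to identify the normal bundle and compute its first Chern class in terms of $B$, $C$ and $H$.

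The central input from the theory of symmetric degeneracy loci (cf.\ \cite{HT_CH}) is the identification
\[
N_{Y/\bP^n} \;\simeq\; \Sym^2(B^*) \otimes \cO_Y(1).
\]
Intuitively, since $\cD'_{k-1}(\varphi_M) = \emptyset$ on $Y$, first-order symmetric perturbations of $\alpha$ that change the rank are parametrized by symmetric bilinear forms on $B$ with values in $\cO_Y(1)$. Note that the rank of $\Sym^2(B^*)\otimes \cO_Y(1)$ is $\binom{n+2-k}{2}$, which matches the expected codimension of $Y$ in $\bP^n$, so the identification is consistent on dimensional grounds.

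Set $r = n+1-k = \operatorname{rank}(B) = \operatorname{rank}(C)$. I would first extract $c_1(B)$ (up to $2$-torsion) from \eqref{EQ:EXSEQChern}: additivity of $c_1$ along the sequence gives
\[
c_1(B) - (n+1)H|_Y + c_1(C) = 0,
\]
and the symmetry relation $C = B^*\otimes \cO_Y(1)$ yields $c_1(C) = -c_1(B) + r\,H|_Y$. Substituting, one obtains
\[
2c_1(B) = (r - (n+1))H|_Y = -k\,H|_Y, \qquad \text{equivalently} \qquad 2c_1(B^*) = k\,H|_Y.
\]
Next, using the standard identity $\det(\Sym^2 V) = (\det V)^{\operatorname{rk} V + 1}$ for a vector bundle $V$, we get
\[
c_1\bigl(\Sym^2(B^*)\otimes \cO_Y(1)\bigr) \;=\; (r+1)\,c_1(B^*) \;+\; \binom{r+1}{2}H|_Y.
\]

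The key point is that although $c_1(B)$ itself may only be defined rationally or up to $2$-torsion, we only ever need $2c_1(B^*)$, which is integral. Doubling the expression for $K_Y$ we obtain
\[
2K_Y \;=\; -2(n+1)H|_Y + (r+1)\bigl(2c_1(B^*)\bigr) + 2\binom{r+1}{2}H|_Y
\;=\; -2(n+1)H|_Y + (r+1)k\,H|_Y + (r+1)r\,H|_Y,
\]
and collecting the $H|_Y$-coefficient using $k + r = n+1$ yields
\[
2K_Y \;=\; \bigl[(r+1)(n+1) - 2(n+1)\bigr]H|_Y \;=\; (n+1)(r-1)H|_Y \;=\; (n+1)(n-k)H|_Y,
\]
as claimed. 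The main conceptual obstacle is the identification of the normal bundle as $\Sym^2(B^*)\otimes \cO_Y(1)$ rather than the naive $B^*\otimes C$ used in the non-symmetric case; the numerical miracle is that the factor $2$ appearing because $\det B$ is only determined up to $2$-torsion cancels precisely with the $2$ on the left-hand side.
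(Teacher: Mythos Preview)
Your argument is essentially the same as the paper's: both identify $N_{Y/\bP^n}\simeq\Sym^2(B^*)\otimes\cO_Y(1)$ via \cite{HT_CH}, compute its first Chern class, and use the exact sequence \eqref{EQ:EXSEQChern} together with $C\simeq B^*\otimes\cO_Y(1)$ to obtain $2c_1(B)=-kH|_Y$, then substitute. There is one slip: your displayed ``additivity'' relation should read $c_1(B)+(n+1)H|_Y-c_1(C)=0$ (alternating sum along the four-term sequence), not $c_1(B)-(n+1)H|_Y+c_1(C)=0$; with the correct signs the substitution indeed yields $2c_1(B)=(r-(n+1))H|_Y=-kH|_Y$, and the rest of your computation is unaffected.
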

\begin{proof}

As $\cN_{Y/\bP^n}=(\Sym^2B^*)\otimes\cO_{Y}(1)$ (see \cite{HT_CH}), 
we get
$$c_1(\cN_{Y/\bP^n})=\Rank(\Sym^2B^*)c_1(\cO_{Y}(1))+c_1(\Sym^2B^*)=\binom{n+2-k}{2}H|_{Y}-(n+2-k)c_1(B).$$
Since $c_1(Y)=c_1(\bP^n)|_{Y}-c_1(\cN_{Y/\bP^n})$ by the normal exact sequence we have
$$K_Y=-c_1(Y)=-(n+1)H|_Y+\binom{n+2-k}{2}H|_Y-(n+2-k)c_1(B).$$
Since $C=B^*\otimes \cO_Y(1)$, we obtain $c_1(C)=(n+1-k)H|_Y-c_1(B)$. From this relation and from the exact sequence \eqref{EQ:EXSEQChern}, one easily gets that $2c_1(B)=-kH|_Y$ which allows us to conclude.
\end{proof}

\begin{remark}
Let us notice that Propositions \ref{PROP:Ysmooth}, \ref{PROP:canonico} and Theorem \ref{THM:smooth2} give a description of $\Sing(\cH)$, where $\cH$ is the Hessian hypersurface in $\bP^4$ associated to a general cubic threefold. Indeed, one gets that it is a smooth, irreducible curve of degree $20$ and genus $26$, as shown in \cite{AR} with different techniques.
\end{remark}

In a similar way, if $X=V(f)\subset \bP^n$ is a general smooth cubic hypersurface, one can study the {\it smallest} non-empty $\cD_k(f)$, namely, the one for which $\cD_{k+1}(f)=\emptyset$.

\begin{remark}
It is easy to see that the {\it smallest} $\cD_k(f)$ is a curve exactly when $n$ and $k$ are such that 
$$n=n_s=\binom{s+1}{2}+1,\qquad\mbox{ and }\qquad k=k_s=\binom{s}{2}+2$$
with $s\geq 1$. The first two cases, namely $(2,2)$ and $(4,3)$, represent, respectively, the case of cubic curves (for which $\cD_2(f)=\cH_f$ is again a smooth cubic curve) and the case studied by Adler (i.e. $\cD_3(f)=\Sing(\cH_f)$ for $V(f)$ general smooth cubic threefold). By setting $C_s=\cD_{k_s}(f)$, by Proposition \ref{PROP:canonico} one has that 
$$2K_{C_s}=\left(\binom{s+1}{2}+2\right)(s-1)H|_{C_s}$$
so, using Theorem \ref{THM:smooth2}, one easily obtains the degree of $C_s$ and its genus. 
\end{remark}

In the next section we will deal with the case $n=5$ and describe the {\it smallest} locus, namely the surface $Y$ arising as singular locus of the Hessian variety of a general cubic fourfold. In order to do this we will need a construction related to degeneracy loci and covers, that we will develop in the next subsection and that is inspired by a natural question coming from Proposition \ref{PROP:canonico}: one might wonder whether in this case $K_Y=3H|_Y$ or not since we know that $2K_Y=6H|_{Y}$.


\subsection{Covers and connectedness}
\label{SUBSEC:DoubleCovers}
$\,$\\
In this subsection we present a general construction that allows us to describe the existence of $2:1$ covers for suitable degeneracy loci of symmetric maps between vector bundles.

Let us start by considering a vector bundle $E$ of rank $e+1$ on an irreducible projective variety $X$ of dimension $n$ and a symmetric map $\varphi:E\rightarrow E^*\otimes L$ where $L$ is a line bundle. For any $m$ with $1\leq m\leq e+1$ one can consider the relative Grassmannian $\pi:\bG=G(m,E)\rightarrow X$ associated to $E$, a fiber bundle whose fiber over $x$ is the Grassmannian $G(m,E_x)$ of $m$-dimensional subspaces of $E_x$.
We will denote by $\cS_E$ and $\cU_E$ the tautological bundle and the universal bundle of $\bG$ respectively, which fit into the exact sequence
\begin{equation}
\label{EQ:TAUTOLOGICAL}
\xymatrix@R=30pt{
0 \ar[r] &  
    \cS_E \ar[r]^-{\alpha}  &
    \pi^*E \ar[r]  &
    \cU_E \ar[r] &
    0
}
\end{equation}

where $\alpha_{W}:(\cS_E)_{W}\simeq W\to (\pi^*E)_W\simeq E_{\pi(W)}$
is the natural inclusion of the subspace $W\subseteq E_{\pi(W)}$ for any $W\in \bG$. 
\medskip

Denote by $\alpha^*\otimes\id_{\pi^*L}:\pi^*(E^*\otimes L) \to \cS_E^*\otimes \pi^*(L)$  the map obtained by dualizing $\alpha$ and then tensoring by $\pi^{*}(L)$. Then from the diagram
$$
\xymatrix@C=45pt@R=15pt{
0 \ar[r] &
    \cU_E^*\otimes \pi^*L \ar[r] &
    \pi^*(E^*\otimes L) \ar[r]^-{\alpha^*\otimes \id_{\pi^*(L)}} &
    \cS_E^*\otimes \pi^*(L) \ar[r] &
    0 \\
  & &  \colorbox{white}{\phantom{XXXX}}\ar@{->}[u]^-{\pi^*(\varphi)} & &   \\
0 \ar[r] &  
    \cS_E \ar[r]_-{\alpha} \ar`u[u]`[rruu]_(0.7){\psi}[rruu] &
    \pi^*E \ar[r]\ar@{-}[u]  &
    \cU_E \ar[r] &
    0
}
$$
one can define the morphism $\psi:\cS_E\rightarrow \cS_E^*\otimes\pi^*(L)$.

\begin{remark}
\label{REM:psi}
As $\varphi$ is symmetric, we have $(\varphi)^*\otimes \id_{\pi^*(L)}=\varphi$, so $\pi^*((\varphi)^*\otimes \id_{\pi^*(L)})=\pi^*\varphi$. Hence
$$\psi^*\otimes \id_{\pi^*L}=(\alpha^*\otimes \id_{\pi^*(L)})\circ((\pi^*(\varphi))^*\otimes \id_{\pi^*(L)})\circ \alpha=(\alpha^*\otimes \id_{\pi^*(L)}) \circ\pi^*(\varphi)\circ \alpha=\psi$$
so $\psi$ is a symmetric morphism of vector bundles on $\bG$. Notice, moreover, that we can interpret $\psi$ as a section of the bundle $\Hom^{sym}(\cS_E, \cS_E^*\otimes\pi^*(L))=\Sym^2(\cS_E^*)\otimes \pi^*(L)$.
\end{remark}

We fix now an integer $k$ such that the degeneracy locus $\cD_k'(\varphi)=\cD_k'$ is non-empty. If $x\in \cD_k'$ then $\varphi_x:E_x\to E_x^*\otimes L_x\simeq E_x^*$ is a symmetric morphism. Then, $\varphi_x$ can be thought either as a symmetric bilinear form or as a polynomial of degree $2$ which identifies a quadric $Q_x\subseteq \bP^e$. By construction, $Q_x$ is a quadric of rank at most $k$. Recall that a linear subspace in $\bP^e$ is called isotropic with respect to $Q_x$ if it is contained in $Q_x$.
\smallskip

\begin{remark}
\label{REM:ISOTROPIC}
Assume that $Q_x$ has rank exactly $k$. Then $Q_x$ contains either one or two families of maximal isotropic subspaces. More precisely, $Q_x$ always contains:
\begin{itemize}
    \item two irreducible $\binom{h}{2}$-dimensional families of maximal isotropic $(e-h)$-planes if $k=2h$;     
    \item one irreducible $\binom{h+1}{2}$-dimensional family of maximal isotropic $(e-h-1)$-planes if $k=2h+1$.
\end{itemize}
In order to see this, notice that the vertex of $Q_x$ is a $(e-k)$-plane so, by cutting $Q_x$ with a general $(k-1)$-plane $\Lambda$, one has a smooth quadric of dimension equal to $k-2$. Then $Q_x\cap \Lambda$ contains either one family of dimension $\binom{h+1}{2}$ or two families of dimension $\binom{h}{2}$ of $(h-1)$-planes depending on the parity of $k$ (see \cite[Chapter 6, page 735]{GH}). One then concludes by observing that families of linear spaces in $Q_x\cap\Lambda$ and in $Q_x$ are in bijection via joint union with the vertex of $Q_x$.
If the rank of $Q_x$ is strictly less than $k$, with the same method, one can show that $Q_x$ always contains $(e-h)$-planes when $k=2h$ or $(e-h-1)$-planes when $k=2h+1$.
\end{remark}

From now on set 
$$m=\begin{cases}
e-h & \quad\mbox{ if }k=2h+1 \\
e-h+1 & \quad\mbox{ if }k=2h 
\end{cases}
$$
so that, from Remark \ref{REM:ISOTROPIC}, the quadric $Q_x$ contains (possibly non-maximal) isotropic subspaces of dimension $m-1$ for all $x\in \cD_k'$. 

\begin{lemma}\label{LEM:Isotropic}
Let $W\in \bG=G(m,E)$ and set $x=\pi(W)$, i.e. $W\subseteq E_x$ is a $m$-dimensional linear subspace of $E_x$. Then $\bP(W)$ is an isotropic subspace for $Q_x$ if and only if $\psi_W\equiv 0$.
\end{lemma}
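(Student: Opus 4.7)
The plan is to unwind the three-step construction of $\psi$ fiberwise at a point $W \in \bG$ and to observe that $\psi_W$ coincides with the restriction to $W$ of the symmetric bilinear form associated to $\varphi_x$; the isotropicity condition will then reduce to a tautology via polarization.

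First, I would compute $\psi_W$ explicitly. At $W \in \bG$ with $x = \pi(W)$, the relevant fibers are $(\cS_E)_W = W$, $(\pi^*E)_W = E_x$, $(\pi^*(E^* \otimes L))_W = E_x^* \otimes L_x$, and $(\cS_E^* \otimes \pi^*L)_W = W^* \otimes L_x$. The fiber of $\alpha$ is the canonical inclusion $\iota \colon W \hookrightarrow E_x$, the fiber of $\pi^*\varphi$ is $\varphi_x$, and the fiber of $\alpha^* \otimes \id_{\pi^*L}$ is $\iota^* \otimes \id_{L_x}$, namely the restriction of linear functionals from $E_x$ to $W$ tensored with $L_x$. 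Composing these three pieces shows that $\psi_W \colon W \to W^* \otimes L_x$ is precisely the restriction of the symmetric pairing $\beta_x \colon E_x \times E_x \to L_x$ associated with $\varphi_x$ to the subspace $W \times W$.

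With this identification in hand, the conclusion is immediate. By construction $Q_x \subseteq \bP(E_x)$ is cut out by the quadratic form $q_x(v) = \beta_x(v, v)$, so $\bP(W) \subseteq Q_x$ means $q_x|_W \equiv 0$. Since $\operatorname{char}(\bK) = 0$, the polarization identity $2\beta_x(v, w) = q_x(v+w) - q_x(v) - q_x(w)$ gives the chain of equivalences $q_x|_W \equiv 0 \iff \beta_x|_{W \times W} \equiv 0 \iff \psi_W \equiv 0$, which is the statement of the lemma. There is no real obstacle: the whole content is to make Remark \ref{REM:psi}, where $\psi$ is interpreted as a section of $\Sym^2(\cS_E^*) \otimes \pi^*(L)$, concrete by reading its fiber at $W$ as the restricted quadratic form, so that the vanishing of this section at $W$ is exactly the isotropy of $\bP(W)$ with respect to $Q_x$.
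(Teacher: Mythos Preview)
Your argument is correct and follows essentially the same route as the paper: both unwind $\psi_W$ fiberwise as the composition $(\iota^*\otimes\id_{L_x})\circ\varphi_x\circ\iota$ to identify it with the restriction of the symmetric form $\varphi_x$ to $W\times W$, and then observe that this vanishes precisely when $\bP(W)\subseteq Q_x$. The only cosmetic difference is that you invoke polarization explicitly, whereas the paper passes directly from ``$\varphi_x(v)(v')=0$ for all $v,v'\in W$'' to isotropy.
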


\begin{proof}
Recall that $\alpha_W:W\to E_x$ is the inclusion of $W$ in $E_x$. Hence, $(\alpha^*)_W=(\alpha_W)^*:E_x^*\to W^*$ takes a linear form on $E_x$ and restrict it to $W$. Since $\pi^*(\varphi)_W:E_x\to E_x^*\otimes L_x\simeq E_x^*$ is the map sending $v\in E_x$ to $\varphi_x(v)$ one has that 
$$\forall W\in \bG(m,E)\, \mbox{ and }\, \forall\, v\in(\cS_E)_W=W\qquad  \psi_W(v)=\varphi_x(v)|_W,$$
i.e. the linear form $\varphi_x(v)$  on $E_x$ restricted to $W$. 
Since $\bP(W)$ is an isotropic subspace for $Q_x$ if and only if $\forall v,v'\in W$ one has $\varphi_x(v)(v')=0$, we have that $\bP(W)$ is isotropic for $Q_x$ if and only if $\psi_W\equiv 0$.
\end{proof}

Let us now denote the zero locus of the symmetric morphism $\psi$ by $Z=Z(\psi)=\{W\in \bG \,|\, \psi_W\equiv 0\}$. 
\begin{remark}
One can compute the expected dimension of $Z$ as
$$\Edim(Z)=\dim(\bG)-\Rank(\Sym^2(\cS_E^*)\otimes\pi^*(L))=\dim(X)+m(e+1-m)-\binom{m+1}{2}$$
since $Z=Z(\psi)$, where $\psi$ is section of the bundle $\Sym^2(\cS_E^*)\otimes\pi^*(L)$ and $\Rank(\cS_E^*)=m$.
\end{remark}

By construction and, in particular, by Lemma \ref{LEM:Isotropic}, it is then clear that $\cD_k'=\cD_k'(\varphi)\subseteq\pi(Z)$. Indeed, to any point $x\in\cD_k'$ corresponds a quadric $Q_x$ which contains at least one family of isotropic $(m-1)$-planes: for every such plane $W$, we have that $\psi_W\equiv0$ and $\pi(W)=x$. 
\smallskip

Let us now consider the following easy result of linear algebra:
\begin{lemma}
Let $W\subseteq V$ be two vector spaces of dimension $l$ and $e+1$ respectively, $\iota:W\to V$ the natural inclusion and let $\varphi:V\to V^*$ be a linear map.  If $\eta=\iota^*\circ \varphi\circ \iota:W\to W^*$ is the zero map, then the rank of $\varphi$ is at most $2(e+1-l)$.
\end{lemma}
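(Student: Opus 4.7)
The argument is pure linear algebra in two short steps. The only input is that the dual map $\iota^\ast:V^\ast\to W^\ast$ coincides with restriction of linear forms, and hence has kernel $\mathrm{Ann}(W)\subseteq V^\ast$ of dimension $\dim V-\dim W=e+1-l$.

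The first step is to translate the hypothesis. Since $\iota^\ast$ is just the restriction map, the composite $\eta=\iota^\ast\circ\varphi\circ\iota$ sends $w\in W$ to the linear form $\varphi(w)|_W\in W^\ast$. The assumption $\eta\equiv 0$ therefore amounts to saying that $\varphi(w)\in\mathrm{Ann}(W)$ for every $w\in W$, i.e.
\[
\varphi(W)\subseteq \mathrm{Ann}(W).
\]
In particular, $\dim \varphi(W)\leq \dim \mathrm{Ann}(W)=e+1-l$.

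The second step is to control the image of a complement. Pick any linear complement $U$ of $W$ in $V$, so that $V=W\oplus U$ with $\dim U=e+1-l$. Then $\varphi(V)=\varphi(W)+\varphi(U)$, and combining the previous bound with the trivial estimate $\dim \varphi(U)\leq\dim U=e+1-l$ yields
\[
\Rank(\varphi)=\dim\varphi(V)\leq \dim\varphi(W)+\dim\varphi(U)\leq 2(e+1-l),
\]
which is the desired inequality.

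There is no real obstacle here; the statement is essentially a restatement of the fact that a symmetric-looking incidence $\varphi(W)\subseteq \mathrm{Ann}(W)$ costs rank at most twice the codimension of $W$. I note that the lemma as stated does not require $\varphi$ to be symmetric, although in the application to the morphism $\psi$ of Remark \ref{REM:psi} it will be; the bound is sharp exactly when $\varphi|_W:W\to \mathrm{Ann}(W)$ is surjective and $\varphi(U)$ projects isomorphically onto $V^\ast/\mathrm{Ann}(W)\simeq W^\ast$, which is what makes $2(e+1-l)$ the relevant quantity when comparing with the rank $k$ of the fibre quadric $Q_x$.
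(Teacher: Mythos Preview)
Your proof is correct. Both you and the paper start from the same key observation: the hypothesis $\eta\equiv 0$ says exactly that $\varphi(W)\subseteq\Ker(\iota^*)=\mathrm{Ann}(W)$, so $\dim\varphi(W)\le e+1-l$. From here the two arguments diverge slightly. The paper works on the kernel side: rank--nullity on $\varphi|_W$ gives $\dim(W\cap\Ker\varphi)\ge 2l-e-1$, hence $\dim\Ker\varphi\ge 2l-e-1$ and the rank bound follows. You instead work directly on the image side by choosing a complement $U$ and bounding $\dim\varphi(V)\le\dim\varphi(W)+\dim\varphi(U)$. Your route is marginally more direct and avoids the intermediate kernel estimate; the paper's route has the small advantage of exhibiting an explicit subspace of $\Ker\varphi$ sitting inside $W$, which is sometimes useful in applications. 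Either way the lemma is a two-line linear algebra fact, and your closing remarks about sharpness and the relation to the rank-$k$ quadric $Q_x$ are apt.
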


\begin{proof}
For brevity, set $\tilde{\varphi}=\varphi\circ \iota$. Since $\eta\equiv0$, we have that $\mathrm{Im}(\tilde{\varphi})\subseteq\Ker(\iota^*)$, which has dimension $e+1-l$, and so $\dim(\mathrm{Im}(\tilde{\varphi}))\le e+1-l$ and also $\dim(\Ker(\tilde{\varphi}))\ge l-(e+1-l)=2l-e-1$. Since $\iota$ is injective, we also have that $\dim(\iota(\Ker(\tilde{\varphi})))\ge 2l-e-1$, but $\iota(\Ker(\tilde{\varphi}))=W\cap\Ker(\varphi)\subseteq\Ker(\varphi)$. From this we get that $\Ker(\varphi)$ has dimension at least $2l-e-1$ and so $\Rank(\varphi)\le (e+1)-(2l-e-1)$, so we get the claim.
\end{proof}

In particular, for $k=2h$ even, by setting $l=e-h+1$ one also has that $\pi(Z(\psi))\subseteq \cD_k'(\varphi)$:

\begin{corollary}
Assume that $k=2h$ is even. Then $\pi(Z(\psi))=\cD_k'(\varphi)$.
\end{corollary}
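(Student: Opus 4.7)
The plan is to prove the reverse inclusion $\pi(Z(\psi))\subseteq \cD_k'(\varphi)$, since the inclusion $\cD_k'(\varphi)\subseteq \pi(Z(\psi))$ has already been established in the discussion just before the corollary (via Remark \ref{REM:ISOTROPIC} and Lemma \ref{LEM:Isotropic}). The whole argument should be a pointwise application of the linear algebra lemma immediately preceding the corollary, specialized to the even case where the numerics work out exactly.

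Concretely, I would fix $W\in Z(\psi)$ and set $x=\pi(W)$, so that $W\subseteq E_x$ is a subspace of dimension $m=e-h+1$ (using $k=2h$). By Lemma \ref{LEM:Isotropic}, the condition $\psi_W\equiv 0$ translates exactly into the statement that the composition $\iota^*\circ \varphi_x\circ \iota:W\to W^*$ vanishes, where $\iota:W\hookrightarrow E_x$ is the inclusion (and we implicitly identify $E_x^*\otimes L_x\simeq E_x^*$). I would then apply the preceding lemma with $V=E_x$, the given $W$, and the linear map $\varphi_x$: it yields
\[
\Rank(\varphi_x)\leq 2(e+1-m)=2(e+1-(e-h+1))=2h=k,
\]
which is precisely the condition $x\in \cD_k'(\varphi)$.

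Combining the two inclusions gives the equality $\pi(Z(\psi))=\cD_k'(\varphi)$. There is no real obstacle here: the content of the corollary is entirely in the arithmetic identity $2(e+1-m)=k$, which is why the statement is restricted to the even case (for odd $k=2h+1$ we would have $m=e-h$, giving the weaker bound $2h+2>k$, so the lemma cannot close the loop and only the inclusion $\cD_k'(\varphi)\subseteq \pi(Z(\psi))$ survives). The main conceptual point to emphasize is that in the even case the maximal isotropic subspaces of a rank-$k$ quadric have exactly the dimension $m-1$ that we chose, so demanding $W$ to be isotropic for $Q_x$ forces the rank of $Q_x$ down to at most $k$ rather than merely bounding it by something larger.
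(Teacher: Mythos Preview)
Your proof is correct and follows exactly the paper's approach: the inclusion $\cD_k'(\varphi)\subseteq \pi(Z(\psi))$ is already established, and the reverse inclusion is obtained by applying the preceding linear algebra lemma with $l=m=e-h+1$, which yields $\Rank(\varphi_x)\leq 2(e+1-m)=2h=k$. Your additional remark on why the argument fails for odd $k$ is accurate and helpful, though not part of the paper's (very terse) proof.
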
 

Assume, as above, that $k=2h$ is such that the degeneracy locus $\cD_k'(\varphi)=\cD_k'$ is non-empty and denote by $\pi:Z(\psi)\to \cD_k'(\varphi)$ also the restriction of $\pi:\bG\to \bP^e$ and consider $m=e-h+1$. The fiber of $\pi$ over $[x]\in \cD_k'$ is, by construction, a variety parametrizing the isotropic $(m-1)$-planes in $Q_x$. If $[x]\in \cD_k'\setminus\cD_{k-1}'$, $\pi^{-1}([x])$ parametrizes maximal isotropic $(m-1)$-planes in $Q_x$ so it has two irreducible disjoint components of dimension $\binom{h}{2}$ by Remark \ref{REM:ISOTROPIC}. One can consider the Stein factorization of $\pi$, i.e.
\begin{equation}
\label{EQ:DiagDoubleCover}
\xymatrix@R=15pt@C=40pt{
& Z=Z(\psi)\ar@{^{(}->}[r]\ar[ld]_-{\alpha}\ar@{->>}[dd]^-{\pi} & \bG\ar@{->>}[dd]^-{\pi} \\
\tilde{Y}\ar[rd]_-{\beta}\\
& Y=\cD_k'(\varphi)\ar@{^{(}->}[r] & \bP^e
}
\end{equation}
where $\alpha$ has connected fibers and $\beta$ is finite. From the above discussion, one can see that the map $\beta$ is a $2:1$ morphism, whose possible ramification lies in $\beta^{-1}(\cD_{k-1}')$.
\smallskip

It is then interesting to focus on the case where $\cD_{k-1}'$ is empty (this happens, for example, when $\cD_k'$ is smooth) and $Y=\cD_k'$ is connected: in this situation one can wonder whether the finite map $\beta$, which is then an unramified $2:1$ cover, is non-trivial. This covering is trivial if and only if $\tilde{Y}$ is not connected, i.e. if and only if $Z$ is not connected since $\alpha$ has connected fibers. Notice that, even if we assume that $E$ and $k$ satisfy the hypotheses of Theorem \ref{THM:degenloci}(a) we can't guarantee the connectedness of $Z=Z(\psi)=\cD_0'(\psi)$ since $(\Sym^2\cS_E^*)\otimes \pi^*L$ is not ample in general.
\smallskip

Let us now propose a sufficient condition which allows us to obtain the connectedness of $Z$, under suitable assumptions that will be satisfied in the case we will consider.

\begin{lemma}
\label{LEM:CohOfResolution}
Let 
$$
0\to \cF_p\xrightarrow{\lambda_{p-1}} \cdots \xrightarrow{\lambda_{2}} \cF_2 \xrightarrow{\lambda_1}\cF_1 \xrightarrow{\lambda_0} \cF_0 \to 0$$
be an exact sequence of sheaves. Let $k\geq 0$ and assume that $H^{j+k-1}(\cF_j)=0$ for all $j$ such that $1\leq j \leq p$. Then one has $H^k(\cF_0)=0$.
\end{lemma}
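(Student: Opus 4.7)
The plan is to break the long exact sequence into short exact sequences by considering images (equivalently, kernels) of successive maps, and then to chase the required vanishing through the associated long exact sequences in cohomology.

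More precisely, for each $i = 1, \ldots, p-1$, set $\cI_i = \operatorname{im}(\lambda_i \colon \cF_{i+1} \to \cF_i)$, and set $\cI_0 = \cF_0$. By exactness of the original complex, $\lambda_{p-1}$ is injective so $\cI_{p-1} \cong \cF_p$, and the resolution decomposes into $p-1$ short exact sequences
\begin{equation*}
0 \to \cI_{i+1} \to \cF_{i+1} \to \cI_i \to 0, \qquad i = 0, 1, \ldots, p-2,
\end{equation*}
with the last one identifying to $0 \to \cF_p \to \cF_{p-1} \to \cI_{p-2} \to 0$.

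From the $i$-th short exact sequence above, the long exact sequence in cohomology contains the fragment
\begin{equation*}
H^{k+i}(\cF_{i+1}) \longrightarrow H^{k+i}(\cI_i) \longrightarrow H^{k+i+1}(\cI_{i+1}).
\end{equation*}
The hypothesis applied with $j = i+1$ (so $j + k - 1 = k + i$) gives $H^{k+i}(\cF_{i+1}) = 0$, and hence $H^{k+i}(\cI_i) \hookrightarrow H^{k+i+1}(\cI_{i+1})$ for every $i = 0, 1, \ldots, p-2$. Composing these injections yields
\begin{equation*}
H^k(\cF_0) = H^k(\cI_0) \hookrightarrow H^{k+1}(\cI_1) \hookrightarrow \cdots \hookrightarrow H^{k+p-1}(\cI_{p-1}) = H^{k+p-1}(\cF_p),
\end{equation*}
and the last group vanishes by the hypothesis applied with $j = p$. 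This forces $H^k(\cF_0) = 0$.

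This is a completely standard diagram chase, so I do not expect any real obstacle; the only thing to watch carefully is the bookkeeping of indices, in particular the fact that the chain of $p-1$ short exact sequences requires exactly the vanishing of $H^{k+i}(\cF_{i+1})$ for $i = 0, \ldots, p-2$ together with $H^{k+p-1}(\cF_p)$, which is precisely what the hypothesis $H^{j+k-1}(\cF_j) = 0$ for $1 \leq j \leq p$ supplies. Alternatively, one could phrase the same argument as an induction on $p$, obtaining the base case $p = 1$ (in which the complex is $0 \to \cF_1 \to \cF_0 \to 0$ so that $\cF_0 \cong \cF_1$) trivially from $H^k(\cF_1) = 0$, and the inductive step by splitting off the short exact sequence $0 \to \cI_1 \to \cF_1 \to \cF_0 \to 0$ and applying the induction hypothesis to the shortened resolution $0 \to \cF_p \to \cdots \to \cF_2 \to \cI_1 \to 0$ with the shift $k \mapsto k+1$.
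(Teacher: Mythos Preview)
Your proof is correct and follows essentially the same approach as the paper: both split the long exact sequence into short exact sequences $0\to K_j\to \cF_j\to K_{j-1}\to 0$ (your $\cI_i$ are the paper's $K_i$) and chase the vanishing through the associated long exact sequences in cohomology. The only cosmetic difference is that the paper proves $H^{j+k}(K_j)=0$ directly at each step by sandwiching between two zeros, whereas you phrase the same chase as a chain of injections terminating in $H^{k+p-1}(\cF_p)=0$.
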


\begin{proof}
Split the starting exact sequence into short exact sequences of the form
$$(\star_j): \qquad 0\rightarrow K_j\rightarrow \cF_j\rightarrow K_{j-1}\rightarrow0$$
for $1\leq j\leq p-1$ with $K_0=\cF_0$ and $K_{p-1}=\cF_p$:
$$
\xymatrix@C15pt@R20pt{
 && K_{p-1}  \ar@{^{(}->}[rd]&&&& K_2 \ar@{^{(}->}[rd] && && K_0\ar[rd]|-{\colorbox{white}{=}}\\
0 \ar[r] &
    \cF_p\ar[rr]^-{\lambda_{p-1}}  \ar[ru]|-{\colorbox{white}{=}}&&
    \cF_{p-1}\ar[rr]^-{\lambda_{p-2}}   \ar@{^{(}->}[rd]&&
    \dots \ar@{..}[ru]\ar[rr]^-{\lambda_{2}}&&
    \cF_2 \ar[rr]^-{\lambda_{1}} \ar@{->>}[rd] && 
    \cF_1 \ar@{->>}[rr]^-{\lambda_{0}}  \ar@{->>}[ru] &&
    \cF_0 \ar[r] &
    0. \\
 &&&& K_{p-2}\ar@{..}[ru]&&&& K_1\ar@{^{(}->}[ru]\\
}
$$

From exact sequence $(\star_{p-1})$ and by assumption we have
$$\cdots\rightarrow\underbrace{H^{p-2+k}(\cF_{p-1})}_{=0}\rightarrow H^{p-2+k}(K_{p-2})\rightarrow\underbrace{H^{p-1+k}(\cF_{p})}_{=0}\rightarrow \cdots$$
so $H^{p-2+k}(K_{p-2})=0$. By a recursive argument we can show that if we have $H^{j+k}(K_{j})=0$ and $H^{j-1+k}(\cF_j)=0$ (the latter is true by assumption), then also $H^{j-1+k}(K_{j-1})=0$ holds. This claim follows easily from the long exact sequence in cohomology induced by $(\star_j)$:
$$\cdots\rightarrow\underbrace{H^{j-1+k}(\cF_{j})}_{=0}\rightarrow H^{j-1+k}(K_{j-1})\rightarrow\underbrace{H^{j+k}(K_{j})}_{=0}\rightarrow \cdots.$$
At the end of this process we get $0=H^k(K_0)=H^k(\cF_0)$ as desired.
\end{proof}

\begin{corollary}
\label{COR:koszul}
Let $T$ be a smooth connected variety and let $Z$ be a subvariety of $T$ of codimension $p$ which is the zero locus of a section $\theta$ of a vector bundle $\cP$ on $T$ of rank $p$. 
\begin{enumerate}[(a)]
    \item For any $M\in \Pic(T)$ and $k\geq 0$, if $H^{j+k-1}(M\otimes \bigwedge^j\cP^*)=0$ for $1\leq j\leq p$, then one has $H^k(\cI_{Z/T}\otimes M)=0$;
    \item If $H^{j}(\bigwedge^j\cP^*)=0$ for $1\leq j\leq p$, then $Z$ is connected;
    \item For any $k\geq 1$, if $h^k(\cO_T)=0$ and $H^{j+k}(\bigwedge^j\cP^*)=0$ for $1\leq j\leq p$, then $h^k(\cO_Z)=0$.
\end{enumerate}
\end{corollary}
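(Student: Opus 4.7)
The plan is to derive all three statements from the Koszul resolution associated to the regular section $\theta$ of $\cP$. Since $Z=V(\theta)$ has codimension equal to $\Rank(\cP)=p$, the section $\theta$ is regular, so the associated Koszul complex
\begin{equation*}
0\to \bigwedge^{p}\cP^{*}\xrightarrow{\iota_{\theta}} \bigwedge^{p-1}\cP^{*}\xrightarrow{\iota_{\theta}}\cdots\xrightarrow{\iota_{\theta}} \cP^{*}\xrightarrow{\iota_{\theta}} \cO_{T}\to \cO_{Z}\to 0
\end{equation*}
is exact on $T$. Truncating at $\cO_{T}$ yields an exact resolution of the ideal sheaf
\begin{equation*}
0\to \bigwedge^{p}\cP^{*}\to \bigwedge^{p-1}\cP^{*}\to\cdots\to \cP^{*}\to \cI_{Z/T}\to 0,
\end{equation*}
which is the main technical input.

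For (a), one simply tensors the above resolution by $M$ (flat, since $M$ is a line bundle) to obtain a resolution of $\cI_{Z/T}\otimes M$ by the sheaves $\cF_{j}=\bigwedge^{j}\cP^{*}\otimes M$ with $\cF_{0}=\cI_{Z/T}\otimes M$. The hypothesis $H^{j+k-1}(M\otimes \bigwedge^{j}\cP^{*})=0$ for $1\le j\le p$ is exactly the vanishing condition of Lemma \ref{LEM:CohOfResolution}, so that lemma gives $H^{k}(\cI_{Z/T}\otimes M)=0$.

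For (b), apply (a) with $M=\cO_{T}$ and $k=1$: the hypothesis becomes $H^{j}(\bigwedge^{j}\cP^{*})=0$ for $1\le j\le p$, and the conclusion is $H^{1}(\cI_{Z/T})=0$. The short exact sequence $0\to \cI_{Z/T}\to \cO_{T}\to \cO_{Z}\to 0$ then gives the surjection $H^{0}(\cO_{T})\twoheadrightarrow H^{0}(\cO_{Z})$. Since $T$ is connected, $h^{0}(\cO_{T})=1$, and since $Z$ is non-empty (its codimension is $p<\infty$), $h^{0}(\cO_{Z})\ge 1$; therefore $h^{0}(\cO_{Z})=1$ and $Z$ is connected.

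For (c), apply (a) with $M=\cO_{T}$ and the index shifted from $k$ to $k+1$: the hypothesis $H^{j+k}(\bigwedge^{j}\cP^{*})=0$ for $1\le j\le p$ yields $H^{k+1}(\cI_{Z/T})=0$. The long exact sequence in cohomology associated to $0\to \cI_{Z/T}\to \cO_{T}\to \cO_{Z}\to 0$ contains
\begin{equation*}
H^{k}(\cO_{T})\to H^{k}(\cO_{Z})\to H^{k+1}(\cI_{Z/T}),
\end{equation*}
and combining the vanishing $h^{k}(\cO_{T})=0$ with $H^{k+1}(\cI_{Z/T})=0$ forces $h^{k}(\cO_{Z})=0$. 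The only mildly delicate point in the whole argument is verifying the regularity of $\theta$, which is immediate from the equality $\codim_{T}Z=\Rank(\cP)=p$; the rest is a bookkeeping exercise with Lemma \ref{LEM:CohOfResolution}.
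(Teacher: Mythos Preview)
Your proof is correct and follows essentially the same approach as the paper: both use the Koszul resolution of $\cI_{Z/T}$ (exact because $Z$ has the expected codimension), feed it into Lemma \ref{LEM:CohOfResolution} to obtain (a), and then deduce (b) and (c) from the ideal sheaf sequence $0\to \cI_{Z/T}\to \cO_T\to \cO_Z\to 0$ together with the vanishing provided by (a). The only cosmetic difference is that you explicitly note $Z\neq\emptyset$ in (b), which the paper leaves implicit.
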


\begin{proof}
Since $Z$ is of the expected dimension by assumption, the Koszul sequence induced by $\theta$
$$
\xymatrix@C15pt@R20pt{
0 \ar[r] &
    \bigwedge^p\cP^*\ar[rr]^-{\lambda_{p-1}} &&
    \dots \ar[rr]^-{\lambda_{2}} &&
    \bigwedge^2\cP^* \ar[rr]^-{\lambda_{1}} && 
    \cP^* \ar@{->>}[rr]^-{\lambda_{0}} &&
    \cI_{Z/T} \ar[r]&
    0
}
$$
is exact. By tensoring with $M\in \Pic(T)$ we preserve exactness so $(a)$ follows directly from Lemma \ref{LEM:CohOfResolution}. For $(b)$, consider the exact sequence
$$0\rightarrow\cI_{Z/T}\rightarrow\cO_T\rightarrow\cO_Z\rightarrow0.$$
Since $T$ is connected we have an exact sequence
$$0\to H^0(\cO_T)\to H^0(\cO_Z)\to H^1(\cI_{Z/T})$$
hence the vanishing of $H^1(\cI_{Z/T})$ implies the connectedness of $Z$ so we conclude using $(a)$.
For $(c)$, from the same sequence, we have an injection
$$0=H^k(\cO_T)\to H^k(\cO_Z)\hookrightarrow H^{k+1}(\cI_{Z/T}).$$
We can then conclude since, under our assumption, the second term is zero by $(a)$.
\end{proof}

Hence, assuming that the degeneracy locus $Y=\cD_k'(\varphi)$ is connected and smooth, one can show that $\beta:\tilde{Y}\to Y$ is a non-trivial unramified covering by proving that $H^j(\bigwedge^j \cP^*)=0$ with $\cP=\Sym^2(\cS_E^*)\otimes \pi^*L$ whenever $1\leq j\leq \binom{m+1}{2}$.
\medskip 

\subsection{Application to the degeneracy loci $\cD_4'(\varphi_M)\subseteq \bP^5$}
\label{SUBSEC:Application}
$\,$\\
As an application of the above discussion set $X=\bP^5$ and consider $\varphi=\varphi_M:\cO_{\bP^5}^{6}\to \cO_{\bP^5}(1)^6$ where $M$ is symmetric of order $6$ and coefficients in $S^1$. We are interested in this case as consequence of the considerations made in Remark \ref{REM:INTERESTINGCASE}.
\smallskip

Assume that the surface $Y=\cD_4'(\varphi)$ is smooth so that $\cD_3'(\varphi)=\emptyset$. Here with respect to the notations of Subsection \ref{SUBSEC:DoubleCovers} (and the objects in Diagram \eqref{EQ:DiagDoubleCover}), we have $k=4$ and $h=2$ so $\tilde{Y}$ is a smooth surface which is an unramified double cover of $Y$. The fiber of $\alpha:Z\to \tilde{Y}$ over $p\in \beta^{-1}([x])$ is a $\bP^1$ which parametrizes one of the two irreducible families of maximal isotropic $3$-planes contained in the quadric $Q_x$. 
Hence, $\alpha$ has irreducible and equidimensional fibers so $Z$ is a threefold: we can apply Corollary \ref{COR:koszul} in order to study the connectedness of the covering $\tilde{Y}$. Indeed, we have that $\tilde{Y}$ is connected if $Z$ is so and this is equivalent to $h^0(\cO_Z)=1$.
\smallskip

Notice that since $E=\cO_{\bP^5}^6$, we have $\bG=G(4,\cO_{\bP^5}^6)=G(3,\bP^5)\times \bP^5$ so $h^1(\cO_{\bG})=0$. We denote by $\pi_1$ and $\pi_2$ the two natural projections. If $\cF$ and $\cG$ are sheaves on $G(3,\bP^5)$ and $\bP^5$ respectively, we set $\pi_1^*(\cF)\otimes \pi_2^*(\cG):=\cF\boxtimes\cG$ for brevity. Recall that $Z$ is the zero locus of a section of the vector bundle $\cP=\Sym^2(\cS_{\cO_{\bP^5}^6}^*)\otimes \pi^*\cO_{\bP^5}(1)$ which, in this case, can be written as
$\cP=\Sym^2(\cS^*)\boxtimes \cO_{\bP^5}(1)$
where $\cS$ is the tautological bundle of $G(3,\bP^5)$.
Thus, we have
\begin{equation}
\label{EQ:BigWedge}
\bigwedge^j\cP^*=
\bigwedge^j\left(\Sym^2(\cS)\boxtimes \cO_{\bP^5}(-1)\right)=
\left(\bigwedge^j\Sym^2(\cS)\right)\boxtimes \cO_{\bP^5}(-j)
\end{equation}
so it is clear that the vanishing of the coomology group $H^i(\bigwedge^j\cP)$ strongly depends on the vanishing of some of the cohomology groups of $\bigwedge^j\Sym^2(\cS)$ on the Grassmannian $G(3,\bP^5)$. 

\begin{proposition}
\label{PROP:coomS}
One has $H^{i}(\bigwedge^j\Sym^2\cS)=0$ for all pairs $(i,j)$ with $i\geq 0$, $1\leq j\leq 10$ except for the cases where $(i,j)\in\{(2,2), (2,3), (2,4), (4,5), (4,6), (4,7), (6,9)\}$. For these cases, $H^{i}(\bigwedge^j\Sym^2\cS)\neq 0$.
\end{proposition}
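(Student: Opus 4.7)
The strategy is to decompose $\bigwedge^j\Sym^2\cS$ into irreducible homogeneous bundles on $G:=G(3,\bP^5)=G(4,6)$ and then apply the Borel--Weil--Bott theorem to each summand. Setting $V=\bK^4$ for the fibre of $\cS$ at a base point, there is a finite plethysm identity
$$\bigwedge^j\Sym^2 V \;=\; \bigoplus_{\lambda\,\vdash\,2j,\,\ell(\lambda)\leq 4} m_\lambda^{(j)}\,\bS_\lambda V,$$
whose multiplicities $m_\lambda^{(j)}$ can be extracted from the generating character
$$\sum_{j=0}^{10} t^j\,\mathrm{ch}\!\Bigl(\bigwedge^{j}\Sym^2 V\Bigr) \;=\; \prod_{1\leq i\leq k\leq 4}\!\bigl(1+t\,x_i x_k\bigr).$$
Since $\cS$ is $\GL(6)$-homogeneous this decomposition lifts to a bundle identity $\bigwedge^j\Sym^2\cS=\bigoplus_\lambda m_\lambda^{(j)}\bS_\lambda\cS$ on $G$.

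Second, I would feed each summand $\bS_\lambda\cS$ into Borel--Weil--Bott. Form the $\GL(6)$-weight
$$\mu_\lambda \;=\; (-\lambda_4,-\lambda_3,-\lambda_2,-\lambda_1,\,0,0)+(5,4,3,2,1,0),$$
where the sign-flip of the entries reflects the use of $\cS$ in place of $\cS^*$. Then Bott's theorem asserts: if $\mu_\lambda$ has a repeated coordinate (a Weyl-wall) then $H^\ast(G,\bS_\lambda\cS)=0$; otherwise $\mu_\lambda$ is regular, and the cohomology is concentrated in the single degree $\ell(w_\lambda)$, where $w_\lambda$ is the unique permutation of $\{1,\dots,6\}$ sorting $\mu_\lambda$ into strictly decreasing order. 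Summing the non-vanishing contributions over all $\lambda$ with $m_\lambda^{(j)}\neq 0$ yields $H^i(G,\bigwedge^j\Sym^2\cS)$.

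The residual work is a mechanical partition-by-partition verification: for each $j\in\{1,\dots,10\}$, list the $\lambda$'s with $m_\lambda^{(j)}\neq 0$, compute $\mu_\lambda$, and discriminate vanishing from non-vanishing via the repetition test. The expected outcome is that the overwhelming majority of the weights $\mu_\lambda$ lie on a Weyl wall, with the surviving Bott contributions assembling into exactly the seven pairs $(i,j)\in\{(2,2),(2,3),(2,4),(4,5),(4,6),(4,7),(6,9)\}$. A useful sanity check is the extremal case $\bigwedge^{10}\Sym^2\cS=(\det\cS)^5=\cO_G(-5)$, whose weight $(-5,-5,-5,-5,0,0)+\rho=(0,-1,-2,-3,1,0)$ contains a repeated $0$ and so yields total vanishing, consistent with $j=10$ being absent from the exceptional list.

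The main obstacle is combinatorial bookkeeping: enumerating the plethysm multiplicities $m_\lambda^{(j)}$ and the associated Bott weights across all $1\leq j\leq 10$ is lengthy (the number of summands peaks for $j\approx 5$), but each individual computation is elementary and well-suited to verification by a computer algebra system such as \texttt{LiE}, \texttt{SageMath}, or \texttt{Macaulay2}. Once the plethysm decomposition is tabulated, the Bott step is purely mechanical.
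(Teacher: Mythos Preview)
Your proposal is correct and follows essentially the same approach as the paper: decompose $\bigwedge^j\Sym^2\cS$ into irreducible homogeneous summands and apply Bott's theorem to each weight, with the paper merely sketching this as ``an explicit and long computation of all the maximal weights'' while you spell out the plethysm and the Bott step in more detail. Your sanity check for $j=10$ is a nice addition and is consistent with the paper's implicit claim.
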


\begin{proof}
It is well known that $G(k,\bP^n)$ is an homogeneous variety which can be described as $\SL(n+1)/P$, where $P$ is a suitable Lie group of triangular matrices in $\SL(n+1)$. 

The proof of the proposition is based on the study of maximal weights from Lie theory and on Bott's theorem on homogeneous vector bundles on Grassmannians. The interested reader can refer to \cite{FH} and \cite{Ott} (or also \cite[Theorem IV']{Bott} for the original statement of the 
theorem). We give here a very brief insight, with the notation used in \cite{Ott}. Recall that homogeneous vector bundles of rank $r$ on $G(k,\bP^n)$ correspond to representations $\rho:P\to \GL(r)$. For example, $\cS$  corresponds to the so-called standard representation of $P$. One can associate to a representation $\rho$ its weights, which have a natural partial ordering. Maximal weights of $\rho$ correspond to direct summands of the associated vector bundle $E_{\rho}$. 
The claim is obtained by an explicit and long computation of all the maximal weights associated to $E_{\rho}=\bigwedge^j\Sym^2(\cS)$ and then, by using Bott's theorem. This allows us to describe the cohomology groups of $E_{\rho}$ by investigating the behaviour of its maximal weights with respect to a suitable root system.
\end{proof}


For all $d\in \bZ$, from Equation \eqref{EQ:BigWedge} and by K\"unnet's Theorem we have
\begin{equation}
H^i\left(\pi_2^*\cO_{\bP^5}(d)\otimes \bigwedge^j\cP^*\right)\simeq \bigoplus_{a+b=i}\left(H^a\left(\bigwedge^j\Sym^2\cS\right)\otimes H^b(\cO_{\bP^5}(d-j))\right).
\end{equation}

In particular, since for $j\geq 1$ the group $H^b(\cO_{\bP^5}(-j))$ is trivial whenever $b\neq 5$, we have
$$H^j\left(\bigwedge^j\cP^*\right)\simeq H^{j-5}\left(\bigwedge^j\Sym^2\cS\right)\otimes H^5(\cO_{\bP^5}(-j))=0$$
by Proposition \ref{PROP:coomS} and since $H^5(\cO_{\bP^5}(-j))\simeq H^{0}(\cO_{\bP^5}(j-6))^*=0$ for $j<6$. Then, using Corollary \ref{COR:koszul}(b) we have that $Z$ is connected and the same holds for $\tilde{Y}$.
With the same reasoning just used, we also get that
$H^{j+1}(\bigwedge^j\cP^*)=0$ for $j\geq 1$ so we can conclude, again by Corollary \ref{COR:koszul}(c), that $h^1(\cO_Z)=0$.
\smallskip

Similarly, for $d\geq 0$ one has
$$H^{j-1}\left(\pi_2^*(\cO_{\bP^5}(d))\otimes \bigwedge^j\cP^*\right)\simeq \begin{cases}
\mbox{if }j\leq d  & H^{j-1}\left(\bigwedge^j\Sym^2\cS\right)\otimes H^0(\cO_{\bP^5}(d-j))\\
\mbox{if }j>d  & H^{j-6}\left(\bigwedge^j\Sym^2\cS\right)\otimes H^5(\cO_{\bP^5}(d-j))\\
\end{cases}$$
which is always trivial if $d\leq 2$ by Proposition \ref{PROP:coomS}. Hence, by Corollary \ref{COR:koszul}(a), one has $H^0(\cI_{Z/\bG}\otimes\pi_2^*\cO_{\bP^5}(d))=0$ for $d=1,2$.
\smallskip

Summing up, one has the following

\begin{proposition}
\label{PROP:RegularityAndUnramifiedCovering}
Let $n=5$. Assume that $M$ is either a general symmetric matrix with coefficients in $S^1$ or $M=H_f$ for $[f]\in \bP(S^3)$ general. Then the variety $Z$ constructed above is a connected threefold with $h^1(\cO_Z)=0$, $h^0(\cI_{Z/\bG}\otimes\pi_2^*\cO_{\bP^5}(1))=0$ and $h^0(\cI_{Z/\bG}\otimes\pi_2^*\cO_{\bP^5}(2))=0$. Moreover, $\tilde{Y}$ is a connected surface so $\beta:\tilde{Y}\to Y$ is a non-trivial unramified double cover.
\end{proposition}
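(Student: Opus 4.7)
The plan is to read off the three cohomological statements directly from Corollary \ref{COR:koszul} applied to $Z\subset \bG=G(3,\bP^5)\times\bP^5$, which is cut out as the zero locus of a regular section of the rank-$10$ bundle $\cP=\Sym^2(\cS^*)\boxtimes\cO_{\bP^5}(1)$. Concretely, the connectedness of $Z$ is exactly Corollary \ref{COR:koszul}(b); the vanishing $h^1(\cO_Z)=0$ is Corollary \ref{COR:koszul}(c) with $k=1$, whose ambient hypothesis $h^1(\cO_{\bG})=0$ is immediate from K\"unneth and the known cohomology of $G(3,\bP^5)$ and $\bP^5$; and the two statements $h^0(\cI_{Z/\bG}\otimes\pi_2^*\cO_{\bP^5}(d))=0$ for $d=1,2$ are Corollary \ref{COR:koszul}(a) with $k=0$ and $M=\pi_2^*\cO_{\bP^5}(d)$. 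All three reduce to the vanishing of certain groups of the form $H^{i}\bigl(\pi_2^*\cO_{\bP^5}(d)\otimes\bigwedge^j\cP^*\bigr)$ for $1\leq j\leq 10$.

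The key computational ingredient is the identification $\bigwedge^j\cP^*=(\bigwedge^j\Sym^2\cS)\boxtimes\cO_{\bP^5}(-j)$ from Equation \eqref{EQ:BigWedge}, combined with K\"unneth. Since $H^b(\cO_{\bP^5}(d-j))$ vanishes except when $b=0$ (which forces $d\geq j$) or when $b=5$ (which forces $j\geq d+6$), the K\"unneth decomposition collapses to at most two summands in each case. I would then list, case by case, the resulting pairs $(a,j)$ for which the factor $H^a(\bigwedge^j\Sym^2\cS)$ must vanish, and compare them against the explicit exceptional list
$$\{(2,2),(2,3),(2,4),(4,5),(4,6),(4,7),(6,9)\}$$
provided by Proposition \ref{PROP:coomS}. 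A direct check shows that none of the pairs arising (for part (b) the pairs $(j-5,j)$ with $j\geq 6$; for part (c) the pairs $(j-4,j)$ with $j\geq 6$; for part (a) with $d=1,2$ the pairs $(j-1,j)$ with $j\leq d$ and $(j-6,j)$ with $j\geq d+6$) lie in the exceptional list, so each required vanishing holds.

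The last assertion is then essentially formal: from the Stein factorization \eqref{EQ:DiagDoubleCover}, the map $\alpha\colon Z\to\tilde Y$ has connected fibers, each one parametrizing a single irreducible family of maximal isotropic $3$-planes in the corresponding quadric of rank $4$, so $\tilde Y$ is connected once $Z$ is. Since $Y$ is smooth by hypothesis, $\cD_3'(\varphi)=\emptyset$, hence $\beta\colon\tilde Y\to Y$ is \'etale of degree $2$; a connected \'etale double cover of a connected base is automatically non-trivial.

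The real work is encapsulated in Proposition \ref{PROP:coomS}, whose proof goes through Bott's vanishing theorem and an explicit weight computation on $G(3,\bP^5)$. Granting that statement, the present proposition is essentially a matter of bookkeeping; the only subtlety is to be careful in matching the bidegrees $(a,j)$ against the exceptional list, a purely combinatorial exercise.
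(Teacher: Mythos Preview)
Your proposal is correct and follows essentially the same approach as the paper: both reduce everything to Corollary \ref{COR:koszul}, use the identification \eqref{EQ:BigWedge} together with K\"unneth to collapse the relevant cohomology groups to products involving $H^a(\bigwedge^j\Sym^2\cS)$, and then check the resulting pairs $(a,j)$ against the exceptional list of Proposition \ref{PROP:coomS}. Your bookkeeping of the pairs is accurate and matches the paper's computation; the only cosmetic difference is that you phrase the smoothness of $Y$ as ``by hypothesis'' whereas the paper derives it from the generality assumption on $M$ (via Proposition \ref{PROP:Ysmooth} and the discussion at the start of Subsection \ref{SUBSEC:Application}).
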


\section{The singular locus of the Hessian variety of a general cubic fourfold}
\label{SEC:SingHessCubicFourfold}

In this last section, by assuming again $\bK=\bC$, we describe the variety $Y=\cD_4(f)=\Sing(\cH_f)$ when $V(f)\subset \bP^5$ is a general smooth cubic fourfold. The same description is achieved with the same techniques also when $Y=\cD_4'(\varphi_M)$ for a general symmetric matrix of order $6$ with entries that are linear forms in $x_0,\dots, x_5$.

The starting point of this analysis is based on results obtained more generally in the previous sections which we sum up in this lemma:

\begin{lemma}
\label{LEM:BasicFactsOnY}
Assume that $[f]\in U$ is general and denote by $H$ the hyperplane class of $\bP^5$. We have the following:
\begin{enumerate}[(a)]
    \item $\cH_f=\cD_5(f)$ has singular locus $Y=\Sing(\cH_f)=\cD_4(f)$ which has dimension $2$;
    \item $Y$ is connected, smooth and it is cut by $21$ quintic hypersurfaces;
    \item as subvariety of $\bP^5$, $Y$ has degree $35$;
    \item there exists $\eta\in \Pic^0(Y)[2]$ such that $3H|_Y+\eta=K_Y$.
\end{enumerate}
In particular, $Y$ is a minimal surface of general type.
\end{lemma}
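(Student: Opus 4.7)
The plan is to reduce each of the four items (a)--(d) to a theorem already established specialized to $(n,k)=(5,4)$, and then to derive the minimal-of-general-type conclusion from ampleness of $K_Y$. Item (a) is immediate: Theorem~\ref{THM:SingH} identifies $\Sing(\cH_f)$ with $\cD_{4}(f)$ for every smooth cubic, and Corollary~\ref{COR:DIMSING} combined with Remark~\ref{REM:EXPDIM} gives expected codimension $\binom{3}{2}=3$ in $\bP^5$, hence $\dim Y = 2$, for general $f$. Item (c) is a direct substitution into the degree formula of Theorem~\ref{THM:smooth2}(e):
$$\deg(Y)=\frac{\binom{6}{2}}{\binom{1}{0}}\cdot\frac{\binom{7}{1}}{\binom{3}{1}}=15\cdot\frac{7}{3}=35.$$

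For (b), connectedness and smoothness of $Y$ are respectively Proposition~\ref{PROP:Ysmooth}(a) and (b); smoothness rests on Corollary~\ref{COR:DIMSING} together with the vanishing $\cD_3(f)=\emptyset$ for $[f]\in U$ general, a fact from \cite{RV}. The description of $Y$ as the common zero locus of $21$ quintic hypersurfaces is a direct inspection of defining equations: $\cD_4(f)$ is cut out by the $5\times 5$ minors of the symmetric $6\times 6$ matrix $H_f$ whose entries are linear forms (so each minor is a quintic), and symmetry of $H_f$ identifies the minor obtained by deleting row $i$ and column $j$ with the one obtained by deleting row $j$ and column $i$, leaving $\binom{6+1}{2}=21$ distinct quintic equations.

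The main substance is (d), and this is the step I expect to carry the only real difficulty. Plugging $(n,k)=(5,4)$ into Proposition~\ref{PROP:canonico} yields $2K_Y=6\,H|_Y$, so $\eta:=K_Y-3H|_Y$ is a $2$-torsion, numerically trivial class on $Y$. The hard part is showing that $\eta$ is \emph{non-zero}. For this I would invoke the machinery of Subsection~\ref{SUBSEC:DoubleCovers}: by Proposition~\ref{PROP:RegularityAndUnramifiedCovering}, the cover $\beta\colon\tilde{Y}\to Y$ that parametrizes the two families of maximal isotropic $3$-planes in the rank-$4$ quadric $Q_x$ is a connected \emph{non-trivial} unramified double cover. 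Any such cover corresponds to a non-zero element of $\Pic(Y)[2]$, and a direct identification — using the étale relation $K_{\tilde{Y}}=\beta^*K_Y$ together with the natural spinor-type splitting of the symmetric bundle on the tautological Grassmannian bundle $\bG$ which becomes isotropic on $Z$ — identifies this line bundle with $\eta$. Hence $\eta\neq 0$.

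Once (a)--(d) are in place, minimality and general type of $Y$ follow immediately from ampleness of $K_Y$: since $\eta$ is numerically trivial, $K_Y$ is numerically equivalent to $3H|_Y$, which is ample because $H|_Y$ is the restriction to the smooth subvariety $Y\subset\bP^5$ of the hyperplane class of $\bP^5$. By Kleiman's criterion $K_Y$ is then itself ample, so $Y$ has no $(-1)$-curves and is of general type.
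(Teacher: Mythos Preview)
Your treatment of items (a), (b), (c) and of the minimal-of-general-type conclusion is correct and matches the paper's own argument essentially line for line (the paper cites Moishezon--Nakai where you cite Kleiman, but the point is the same: $K_Y\equiv_{\mathrm{num}}3H|_Y$ is ample).

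The genuine issue is your reading of item (d). The statement only asserts the \emph{existence} of some $\eta\in\Pic^0(Y)[2]$ with $K_Y=3H|_Y+\eta$; since $0\in\Pic^0(Y)[2]$, this is immediate from $2K_Y=6H|_Y$ (Proposition~\ref{PROP:canonico}). The paper's proof makes this explicit: ``there exists a \emph{possibly trivial} torsion line bundle $\eta$ of order $2$ such that $K_Y=3H|_Y+\eta$.'' The non-triviality of $\eta$ is \emph{not} part of this lemma; it is established only later (Proposition~\ref{PROP:AbbiamoCedutoAlLatoOscuro}), and by a completely different method, namely a Hilbert-series computation showing $\dim S_Y^{(3)}=56>55=p_g(Y)$, which rules out $K_Y=3H|_Y$.

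More seriously, the route you sketch for non-triviality --- identifying $\eta$ with the $2$-torsion class $\eta'$ attached to the unramified double cover $\beta:\tilde Y\to Y$ via a ``spinor-type splitting'' --- is precisely what the paper states as an \emph{open conjecture} immediately after Proposition~\ref{PROP:AbbiamoCedutoAlLatoOscuro}. The paper knows both that $\eta\neq 0$ and that $\eta'\neq 0$, but does not know that $\eta=\eta'$; your hand-wave does not close this gap. So for the lemma as stated, simply drop the non-triviality claim; and if you want $\eta\neq 0$, you need the Hilbert-series argument, not the double-cover construction.
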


\begin{proof}
Most of the information for $(a),(b)$ and $(c)$ follows from Theorem \ref{THM:smooth2} and Propositions \ref{PROP:Ysmooth}. The surface $Y$ is the degeneracy locus of rank $4$ of the Hessian matrix $H_f$, which is a symmetric matrix of order $6$, so it is defined by the vanishing of the $36$ minors of $H_f$ of order $5$. Each minor gives a quintic equation and, by symmetry, it is enough to consider only $21$ of them. For $(d)$, by Proposition \ref{PROP:canonico}, we have $2K_Y=6H|_Y$ so there exists a possibly trivial torsion line bundle $\eta$ of order $2$ such that $K_Y=3H|_Y+\eta$. In particular, $K_Y$ is numerically equivalent to $3H|_Y$ which is ample since $3H$ is ample on $\bP^5$: $K_Y$ is ample too by Moishezon-Nakai criterion. Hence, $Y$ is a minimal surface of general type.
\end{proof}

We stress that $K_Y$ is not linearly equivalent to $3H|_Y$ as we will show later. Now, let us compute the main invariants of the surface $Y$:

\begin{theorem}
\label{THM:invariants}
Let $Y$ be as above. Then
\begin{enumerate}[(a)]
    \item the (topological) Euler characteristic of $Y$ is $e(Y)=357$;
    \item the Hilbert polynomial of $Y$ is $\chi(\cO_Y(n))=\frac{35}{2}n^2-\frac{105}{2}n+56$;
    \item there exists a non-trivial unramified double covering $\tilde{Y}\to Y$;
    \item $Y$ is regular (i.e. $q(Y)=0$), $p_g(Y)=55$ and $h^{1,1}(Y)=245$;
    \item $h^0(I_{Y/\bP^5}(1))=h^0(I_{Y/\bP^5}(2))=0$.
\end{enumerate}
\end{theorem}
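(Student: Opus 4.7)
The strategy will be to use the unramified double cover $\beta\colon \tilde Y \to Y$ from Section~\ref{SEC:Degeneracyloci}, together with the auxiliary threefold $Z \subset \bG$ constructed there, to reduce the computation of $\chi(\cO_Y)$ and $q(Y)$ to a Bott-theorem calculation on the Grassmannian $G = G(3,\bP^5)$. Parts (c) and (e) are immediate consequences of Proposition~\ref{PROP:RegularityAndUnramifiedCovering}: (c) is its final statement, while (e) follows because the surjection $\pi_2\colon \bG \to \bP^5$ induces an injection
\[
H^0(\bP^5,I_{Y/\bP^5}(d)) \hookrightarrow H^0(\bG, I_{Z/\bG}\otimes \pi_2^*\cO_{\bP^5}(d))
\]
whose target vanishes for $d=1,2$ by that proposition.

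For every $x \in Y = \cD_4(f)$, the quadric $Q_x$ has rank exactly $4$ (since $\cD_3(f)$ is empty), so its maximal isotropic $3$-planes form two disjoint $\bP^1$-families. Consequently the Stein factorization $Z \xrightarrow{\alpha} \tilde Y \xrightarrow{\beta} Y$ of $\pi\colon Z \to Y$ realizes $\alpha$ as a $\bP^1$-bundle, giving $R\alpha_*\cO_Z = \cO_{\tilde Y}$ and, via Leray, the identities $\chi(\cO_Z) = \chi(\cO_{\tilde Y})$ and $h^1(\cO_{\tilde Y}) = h^1(\cO_Z) = 0$ (the last equality by Proposition~\ref{PROP:RegularityAndUnramifiedCovering}). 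Since $Z$ is cut out in $\bG$ (of dimension $13$) by a regular section of the rank-$10$ bundle $\cP = \Sym^2(\cS^*)\boxtimes \cO_{\bP^5}(1)$, its Koszul resolution together with the K\"unneth formula yields
\[
\chi(\cO_Z) = \sum_{j=0}^{10} (-1)^j \, \chi_G\!\left(\bigwedge\nolimits^{j} \Sym^2 \cS\right)\cdot \chi_{\bP^5}(\cO_{\bP^5}(-j)).
\]
The factor $\chi_{\bP^5}(\cO_{\bP^5}(-j))$ vanishes for $1 \le j \le 5$, so only $j \in \{0,6,7,8,9,10\}$ can contribute; Proposition~\ref{PROP:coomS} then pins down the unique non-vanishing cohomology group of $\bigwedge^j\Sym^2\cS$ for $j \in \{6,7,9\}$ and shows total vanishing for $j \in \{8,10\}$. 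The main technical work will therefore be an explicit Bott-theorem computation of the three integers $h^4(\bigwedge^6\Sym^2\cS)$, $h^4(\bigwedge^7\Sym^2\cS)$, $h^6(\bigwedge^9\Sym^2\cS)$, which must combine to give $\chi(\cO_Z) = 112$.

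Once this is in hand, $\beta_*\cO_{\tilde Y} = \cO_Y \oplus \eta'$ for a non-trivial $\eta' \in \Pic(Y)[2]$; since $\eta'$ is numerically trivial, Riemann-Roch gives $\chi(\eta') = \chi(\cO_Y)$, whence $\chi(\cO_Y) = \chi(\cO_{\tilde Y})/2 = 56$ and $q(Y) = h^1(\cO_Y) \le h^1(\cO_{\tilde Y}) = 0$, forcing $p_g(Y) = 55$. Because the element $\eta$ of Lemma~\ref{LEM:BasicFactsOnY}(d) is numerically trivial, $K_Y^2 = (3H|_Y + \eta)^2 = 9\deg(Y) = 315$, and Noether's formula yields $e(Y) = 12\chi(\cO_Y) - K_Y^2 = 357$, proving (a). Riemann-Roch on $Y$ then produces
\[
\chi(\cO_Y(nH)) = \chi(\cO_Y) + \tfrac{1}{2}(nH|_Y)\cdot(nH|_Y - K_Y) = \tfrac{35}{2}n^2 - \tfrac{105}{2}n + 56,
\]
which is (b). Finally $b_1(Y) = 2q(Y) = 0$ implies $b_2(Y) = e(Y) - 2 = 355$, and the Hodge decomposition $b_2 = 2p_g + h^{1,1}$ yields $h^{1,1}(Y) = 245$, completing (d).
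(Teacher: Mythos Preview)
Your proof of (c), (e), and the regularity $q(Y)=0$ in (d) matches the paper exactly. Where you diverge is in obtaining the numerical invariants: the paper computes $e(Y)=357$ \emph{first}, by specializing Pragacz's formula \cite[Prop.~7.13]{Pra} for $c_2$ of a symmetric degeneracy locus (a short $Q$-Schur-polynomial calculation), and then reads off $\chi(\cO_Y)=56$ from Noether's formula. You reverse the flow, aiming to compute $\chi(\cO_Z)$ from the Koszul complex on $\bG$, then extract $\chi(\cO_Y)$ via $\chi(\cO_{\tilde Y})=\chi(\cO_Z)$ and the double cover, and finally recover $e(Y)$ from Noether. Your route is internally consistent and has the merit of relying only on machinery already set up in Section~\ref{SEC:Degeneracyloci} (Koszul, K\"unneth, Bott), so you avoid importing \cite{Pra}; the paper's route is substantially shorter because the degeneracy-locus formula is ready-made. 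One technical point you glide over: $\alpha$ need not be a Zariski $\bP^1$-bundle; what you actually need is $R^{>0}\alpha_*\cO_Z=0$, which follows because $Z$ is a local complete intersection (hence Cohen--Macaulay), $\tilde Y$ is smooth, and all fibres are $\bP^1$, so $\alpha$ is flat by miracle flatness and one applies cohomology and base change.

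There is, however, a real gap: the entire numerical content of (a) and (b) rests on the value $\chi(\cO_Z)=112$, and you have not computed it. You correctly reduce to the three integers $h^4(\bigwedge^6\Sym^2\cS)$, $h^4(\bigwedge^7\Sym^2\cS)$, $h^6(\bigwedge^9\Sym^2\cS)$ on $G(3,\bP^5)$, but then assert only that they ``must combine to give'' the desired number. Each of these requires decomposing the plethysm $\bigwedge^j\Sym^2$ in rank $4$ into Schur functors and running Bott's algorithm on every irreducible summand---a concrete but non-trivial calculation that is not implied by Proposition~\ref{PROP:coomS}, which records only \emph{which} groups vanish, not the dimensions of the surviving ones. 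Until those three numbers are actually produced (or $\chi(\cO_Z)$ is computed directly, e.g.\ by Hirzebruch--Riemann--Roch on $\bG$), nothing in your argument establishes $e(Y)=357$ or $\chi(\cO_Y)=56$; as written, the proposal is a strategy rather than a proof of those items.
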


\begin{proof}
Let us start with the computation of $e(Y)$. One can compute $e(Y)$ by using a computer algebra software (we will use this approach later in order to compute some cohomology groups), but actually here we adapt a formula presented and proved in \cite[Proposition 7.13]{Pra}, which in our specific case is
$$c_2(Y)=\sum_{(i_1,i_2)}(-1)^{i_1+i_2}((i_1+1,i_2))Q_{(i_1+2,i_2+1)}\left(\cO_{\bP^5}^6\left(1/2\right)\right)c_{2-i_1-i_2}(\bP^5)$$
where $(i_1,i_2)$ range in $\{(2,0),(1,1),(1,0),(0,0)\}$ and where $\cO_{\bP^5}(1/2)$ is a formal line bundle whose square is $\cO_{\bP^5}(1)$ (one can be more precise by invoking squaring principle). For brevity we don't report the definition of the coefficients $((a,b))$ and of the $Q$-Schur polynomial $Q_{(a,b)}$. Nevertheless, we give the values of the non-vanishing $Q$-Schur polynomial evaluated in $\cO_{\bP^5}^6(1/2)$ and of the coefficients $((a,b))$ appearing in the above formula:
$$
\begin{array}{cccc}
Q_{2,1}=35H^3 &
Q_{3,1}=105H^4 & 
Q_{4,1}=\frac{777}{4}H^5 & 
Q_{3,2}=\frac{483}{4}H^5 \\
((1,0))=1 &
((2,0))=3 &
((3,0))=7 &
((2,1))=3 
\end{array}
$$
Summing up and developing the computation, one gets $e(Y)=\deg(c_2(Y))=357$.
\smallskip

By Lemma \ref{LEM:BasicFactsOnY}(c) we have that $Y$, as subvariety of $\bP^5$, has degree $35$. Hence we have $H|_Y^2=35$. As $K_Y\equiv_{num}3H|_Y$ by Lemma \ref{LEM:BasicFactsOnY}(d), we have that $K_Y^2=315$ and $K_Y\cdot H|_Y=105$. Then we have
$$\chi(\cO_Y)=\frac{e(Y)+K_Y^2}{12}=\frac{357+315}{12}=56$$
by Noether's formula and 
$$\chi(\cO_Y(n))=56+\frac{1}{2}(nH|_Y)(nH|_Y-K_Y)=
56+\frac{1}{2}((H|_Y^2)n^2-(H|_Y\cdot K_Y)n)=\frac{35}{2}n^2-\frac{105}{2}n+56.$$
We can apply the double cover construction of Subsection \ref{SUBSEC:DoubleCovers} to $Y=\cD_4(f)=\cD_4'(\varphi_{H_f})$ in order to construct the threefold $Z$, the unramified double covering $\beta:\tilde{Y}\to Y$ and the morphisms $\pi$ and $\alpha$ (see Diagram \eqref{EQ:DiagDoubleCover}). Recall that, by Proposition \ref{PROP:RegularityAndUnramifiedCovering} in order to see that $h^1(\cO_Z)=0$ and that $\beta:\tilde{Y}\to Y$ is indeed a non-trivial unramified double covering. 
Since $\alpha_*\cO_{Z}=\cO_{\tilde{Y}}$ by construction and as $H^1(\cO_Z)=0$, by the Leray spectral sequence one has $q(\tilde{Y})=h^1(\cO_{\tilde{Y}})=0$.
From the surjectivity of $\beta:\tilde{Y}\to Y$, we have that $\beta^*:H^0(\Omega_Y^1)\to H^0(\Omega_{\tilde{Y}}^1)$ is injective so $q(Y)=0$. 
The last two invariants, namely $h^{1,1}(Y)$ and $p_g(Y)$, are easily computed knowing that $\chi(\cO_Y)=56$, $q(Y)=0$ and $e(Y)=357$. 

Claim $(e)$ follows since a non-trivial section of $\cI_{Y/\bP^5}(d)$ induces, via pullback, a non-trivial section of $\cI_{Z/\bG}\otimes \pi_2^*\cO_{\bP^5}(d)$ and we know by Proposition \ref{PROP:RegularityAndUnramifiedCovering} that $h^0(\cI_{Z/\bG}\otimes\pi_2^*\cO_{\bP^5}(d))=0$ for $d=0,1$.
\end{proof}
 
The method described in Subsection \ref{SUBSEC:Application} is not powerful enough to prove the vanishing $h^0(\cI_{Z/\bG}\otimes\pi_2^*\cO_{\bP^5}(d))=0$ for $d=3,4$ and so we cannot use this method in order to conclude that $h^0(\cI_{Y/\bP^5}(d))=0$ for $d=3,4$. Nevertheless, by semicontinuity it is enough to establish the vanishing for a single example in order to have it for the general one. With this approach, one can use a computer algebra software (like {\it Magma}) in order to compute the Hilbert series $h_{Y_0}(t)$ associated to the surface $Y_0$ for a specific case, namely the Klein cubic fourfold
$$X_0:\quad f_0=x_0^2x_1+x_1^2x_2+x_2^2x_3+x_3^2x_4+x_4^2x_5+x_5^2x_0=0.$$ 
Defining, for brevity, $x_k=x_{i}$ for any $k\in \bZ$, $i\in\{0,\dots,5\}$ if and only if $k\equiv i \mod 6$, one has
$$\hess(f_0)=\sum_{i=0}^2 x_i^3x_{i+3}^3-x_0x_1x_2x_3x_4x_5+
\sum_{i=0}^5 x_{i}x_{i+1}^3x_{i+3}^2
-\sum_{i=0}^5 x_{i}x_{i+1}x_{i+2}x_{i+3}^3
-\sum_{i=0}^1 x_{i}^2x_{i+2}^2x_{i+4}^2.
$$
Let us observe that $Y_0=\cD_{4}(f_0)$ has the expected dimension: it can be proved directly by using the same reasoning presented in \cite{AR} for the case of the Klein threefold. Using {\it Magma} one obtain
\begin{equation}
\label{EQ:HILB}
h_{Y_0}(t)=\frac{15t^4+10t^3+6t^2+3t+1}{(1-t)^3}=\sum_{i=0}^{\infty} h_{Y_0}^{(i)}t^i=1+6t+21t^2+56t^3+126t^4+231t^5\, \mod t^6.
\end{equation}

Actually, this is also the Hilbert series for $Y$ general, since $h_Y(t)$ is constant for flat families (and we are considering degeneracy loci associated to a morphism of vector bundles). This has several consequences.

\begin{proposition}
\label{PROP:AbbiamoCedutoAlLatoOscuro}
Let $[f]\in U$ be general and let $Y$ be as above. Then
\begin{enumerate}[(a)]
    \item The $2$-torsion element $\eta$ such that $K_Y=3H|_Y+\eta$ is non-trivial;
    \item $h^0(\cI_{Y/\bP^5}(d))=0$ for $d\leq 4$ and $h^0(\cI_{Y/\bP^5}(5))=21$.
\end{enumerate}
\end{proposition}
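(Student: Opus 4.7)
The plan is to deduce $(b)$ from the Magma-computed Hilbert series \eqref{EQ:HILB} for the Klein specialization via flat-family invariance, and then to derive $(a)$ from $(b)$ by a short dimension comparison between $h^0(\cO_Y(3))$ and $p_g(Y)$.

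\textbf{Part $(b)$.} The Klein degeneracy locus $Y_0=\cD_4(f_0)$ is defined by the ideal $I_{\min}$ generated by the $5\times5$ minors of the Hessian $H_{f_0}$. Being a symmetric determinantal ideal of the expected codimension $3$, the quotient $S/I_{\min}$ is Cohen-Macaulay, hence $I_{\min}$ is automatically saturated and its graded pieces compute $(I_{\min})_d=H^0(\cI_{Y_0/\bP^5}(d))$. The family $\{Y_f=\cD_4(\varphi_{H_f})\}$ is flat in a Zariski neighborhood of $[f_0]\in U$ (constant Hilbert polynomial of a Cohen-Macaulay family of the expected codimension), so $h_{Y_f}(t)=h_{Y_0}(t)$ for general $[f]\in U$. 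Reading off \eqref{EQ:HILB} and subtracting from $h^0(\cO_{\bP^5}(d))=\binom{d+5}{5}$ yields
\[
h^0(\cI_{Y/\bP^5}(d))=\binom{d+5}{5}-h_{Y_0}^{(d)}=\begin{cases}0 & 0\le d\le 4,\\ 21 & d=5,\end{cases}
\]
which is exactly the content of $(b)$.

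\textbf{Part $(a)$ and main obstacle.} Using $(b)$ with $d=3$, the sequence $0\to\cI_Y(3)\to\cO_{\bP^5}(3)\to\cO_Y(3)\to 0$ yields an injection $H^0(\cO_{\bP^5}(3))\hookrightarrow H^0(\cO_Y(3))$, so $h^0(\cO_Y(3))\ge\binom{8}{3}=56$. On the other hand $p_g(Y)=h^0(K_Y)=55$ by Theorem \ref{THM:invariants}(d). If $\eta$ were trivial, then $K_Y\sim 3H|_Y$ would force $h^0(K_Y)=h^0(\cO_Y(3))\ge 56$, contradicting $p_g(Y)=55$; hence $\eta$ is a non-trivial $2$-torsion element, completing the proof. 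The main conceptual obstacle is in $(b)$: one must justify that the Hilbert series reads off $h^0(\cI_Y(d))$ rather than merely $\dim(I_{\min})_d$. This hinges on the Cohen-Macaulayness of symmetric determinantal ideals of the expected codimension (which implies saturation), and on flat-family invariance of the Hilbert polynomial; once both are in hand the rest is arithmetic.
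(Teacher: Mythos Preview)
Your proof is correct and follows essentially the same approach as the paper: both deduce $(b)$ by reading off the Hilbert series \eqref{EQ:HILB} of the Klein specialization (invariant in the flat family), and both deduce $(a)$ by comparing $h^0(\cO_Y(3))\geq 56$ with $p_g(Y)=55$. The one point where you are more careful than the paper is in justifying that the Magma computation on the ideal of $5\times 5$ minors actually returns $h^0(\cI_{Y_0/\bP^5}(d))$: you invoke Cohen--Macaulayness of symmetric determinantal ideals of the expected codimension to conclude that $I_{\min}$ is saturated, whereas the paper leaves this implicit (for $d\leq 4$ the inequality $h_{Y_0}^{(d)}=\dim S^d$ alone forces $(I_Y)_d=0$ regardless of saturation, but for the exact count $h^0(\cI_Y(5))=21$ your justification is the right one).
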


\begin{proof}
In order to prove $(a)$, notice that the coefficient $h_Y^{(3)}$ of the Hilbert series of $Y$ is $56$, which equals, by definition, the dimension of $S_Y^{(3)}$, the degree $3$ part of the homogeneous coordinate ring $S_Y$ of $Y$, i.e. $S/I_{Y/\bP^5}$. On the other hand, $S_Y^{(3)}$ is the image of 
$S^3=H^0(\cO_{\bP^5}(3))$ in $H^0(\cO_{Y}(3))$ via the map induced by the exact sequence
$$0\to \cI_{Y/\bP^5}(3)\to \cO_{\bP^5}(3)\to \cO_Y(3)\to 0.$$
Then, if $K_Y=3H|_Y$ we would have a contradiction since we would obtain, by Theorem \ref{THM:invariants}, $55=p_g(Y)=h^0(\cO_Y(3))\geq 56$. Hence $\eta$ is a non-trivial $2$-torsion element of $\Pic(Y)$.
\smallskip

For $(b)$, notice that $h_Y^{(d)}=\dim(S^d)=h^0(\cO_{\bP^5}(d))$ for $d\leq 4$. Hence $H^0(\cI_{Y/\bP^5}(d))$, which equals the kernel of the map $S^d\to H^0(\cO_Y(d))$, is trivial for $d\leq 4$. For $d=5$ one has $h_Y^{(5)}=231=252-21=\dim(S^5)-21$ so $h^0(\cI_{Y/\bP^5}(5))=21$ with the same argument as before.
\smallskip
\end{proof}

Recall that we proved in Subsection \ref{SUBSEC:DoubleCovers} that $Y$ has a natural non-trivial unramified double cover. This corresponds to a $2$-torsion line bundles $\eta'$ on $Y$.
An intriguing question is whether $\eta$ and $\eta'$ coincide. We conjecture the following:

\begin{conj}
We have $\eta=\eta'$ for $[f]\in U$ general.
\end{conj}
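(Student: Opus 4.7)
The plan is to reduce the conjecture to a comparison of line bundles on the double cover $\tilde{Y}$ and then attack it using the $\bP^1$-bundle $\alpha\colon Z\to\tilde{Y}$ arising from the Stein factorisation in Subsection~\ref{SUBSEC:DoubleCovers}.

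\smallskip

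\emph{Reduction to $\tilde{Y}$.} For any non-trivial connected \'etale double cover $\beta\colon\tilde{Y}\to Y$, the splitting $\beta_*\cO_{\tilde{Y}}=\cO_Y\oplus (\eta')^{-1}$ combined with the projection formula yields
\[
\ker\bigl(\beta^*\colon\Pic(Y)\to\Pic(\tilde{Y})\bigr)\;=\;\{\cO_Y,\,\eta'\}.
\]
Since $\eta\neq\cO_Y$ by Proposition~\ref{PROP:AbbiamoCedutoAlLatoOscuro} and $K_{\tilde{Y}}=\beta^*K_Y=3H|_{\tilde{Y}}+\beta^*\eta$, the conjecture $\eta=\eta'$ is equivalent to the equality $K_{\tilde{Y}}=3H|_{\tilde{Y}}$ in $\Pic(\tilde{Y})$.

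\smallskip

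\emph{Computation of $K_{\tilde{Y}}$.} The threefold $Z\subset\bG=G(3,\bP^5)\times\bP^5$ is the regular zero locus of a section of the rank-$10$ bundle $\cP=\Sym^2(\cS^*)\boxtimes\cO(1)$, so adjunction produces
\[
K_Z\;=\;(K_{\bG}+c_1(\cP))|_Z\;=\;(4H-\sigma_1)|_Z,
\]
where $\sigma_1$ is the Pl\"ucker class on $G(3,\bP^5)$. Writing $Z\cong\bP_{\tilde{Y}}(\cE)$ for the rank-$2$ \emph{ruling bundle} $\cE$ on $\tilde{Y}$ whose projectivisation parametrises the $\bP^3$'s in the chosen ruling, and setting $\xi=c_1(\cO_\alpha(1))$, the relative canonical formula gives $\omega_{Z/\tilde{Y}}=\cO(-2\xi)\otimes\alpha^*(\det\cE)^{-1}$. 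From the exact sequence $0\to\pi^*B\to\cS|_Z\to\cS|_Z/\pi^*B\to 0$, the fiberwise identification $\cS|_Z/\pi^*B\cong\cO_\alpha(-1)\otimes\alpha^*\tilde{S}$ with $\tilde{S}:=\alpha_*\bigl((\cS|_Z/\pi^*B)(1)\bigr)$ the ``opposite'' spinor bundle, and the identity $\det B=\cO_Y(-2)$, one derives $\sigma_1|_Z=2H|_Z+2\xi-\alpha^*\det\tilde{S}$. Combining these relations, $K_{\tilde{Y}}=3H|_{\tilde{Y}}$ reduces to the single equality
\[
\det\cE\otimes\det\tilde{S}\;=\;\cO_{\tilde{Y}}(H|_{\tilde{Y}})\qquad\text{in }\Pic(\tilde{Y}).
\]
A fiberwise inspection identifies $\tilde{S}\cong\sigma^*\cE$, where $\sigma$ is the covering involution of $\beta$, so the target becomes the norm relation $N_{\tilde{Y}/Y}(\det\cE)=\cO_Y(1)$ in $\Pic(Y)/\langle\eta'\rangle$.

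\smallskip

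\emph{Main obstacle.} The core difficulty is establishing this norm identity, since the fiberwise analysis only pins down $\cE$ up to tensoring by a line bundle pulled back from $\tilde{Y}$. Two complementary approaches appear feasible. \textbf{(a)} Realise $\cE$ as a summand of a globally constructed spinor bundle associated to the rank-$4$ quadratic bundle $(E/B,q)$ on $Y$ after pulling back to $\tilde{Y}$: the Clifford-algebra construction of the spinor cover would express $\det\cE\otimes\det\tilde{S}$ intrinsically in terms of the discriminant of $q$, and the required $H|_{\tilde{Y}}$ should emerge as a distinguished ``half'' of this class available only on the double cover. \textbf{(b)} Alternatively, use specialisation: the $2$-torsion line bundles $\eta$ and $\eta'$ define sections of the relative $2$-torsion subscheme of the Picard scheme over the irreducible base $U\subset\bP(S^3)$, which is \'etale over $U$, so $\{\eta=\eta'\}$ is open and closed; the conjecture then reduces to an explicit verification for one specific cubic (for instance the Klein cubic of Section~\ref{SEC:SingHessCubicFourfold}) using a computer algebra system to compare explicit divisor representatives of $\eta_0$ and $\eta'_0$ built from the minors of the Hessian matrix.
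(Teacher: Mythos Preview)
The statement you are attempting to prove is explicitly labelled as a \emph{Conjecture} in the paper, and the paper provides no proof of it; the authors write ``An intriguing question is whether $\eta$ and $\eta'$ coincide. We conjecture the following:'' and then leave the matter open. Consequently there is no ``paper's own proof'' to compare your proposal against.

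Turning to the content of your proposal: the reduction step is correct. For a connected \'etale double cover $\beta\colon\tilde{Y}\to Y$ one indeed has $\ker(\beta^*)=\{\cO_Y,\eta'\}$, so $\eta=\eta'$ is equivalent to $\beta^*\eta=\cO_{\tilde{Y}}$, i.e.\ to $K_{\tilde{Y}}=3H|_{\tilde{Y}}$. Your adjunction computation $K_Z=(4H-\sigma_1)|_Z$ also checks out. However, the argument is not a proof: you yourself flag the ``Main obstacle'' and then merely sketch two strategies (a Clifford/spinor-bundle identification, or a specialisation-plus-computer-check) without carrying either to completion. Several intermediate steps are also only asserted, not established: the identification $\cS|_Z/\pi^*B\cong\cO_\alpha(-1)\otimes\alpha^*\tilde{S}$, the formula $\sigma_1|_Z=2H|_Z+2\xi-\alpha^*\det\tilde{S}$, and the claimed isomorphism $\tilde{S}\cong\sigma^*\cE$ all require genuine work (and depend on normalisation choices for $\cE$ that you have not fixed). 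In short, your proposal is a reasonable \emph{plan of attack} on an open problem, but it does not constitute a proof, and the paper does not claim one either.
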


We conclude this section by exploiting again {\it Magma} in order to obtain the following data which hold for $Y$ associated to $[f]\in U$ general:

\begin{equation}
\label{EQ:CohTable}
\begin{array}{c|cccc}
d     & 0 & 1 & 2 & 3 \\ \hline
h^0(\cO_Y(d))     & 1 & 6 & 21 & 56\\
h^1(\cO_Y(d))     & 0 & 0 & 0 & 0\\
h^2(\cO_Y(d))     & 55 & 15 & 0 & 0
\end{array}
\end{equation}

Using this, one has:

\begin{proposition}
\label{PROP:AbbiamoCedutoAlLatoOscuroLaVendetta}
Let $[f]\in U$ be general and let $Y$ be as above. Then $Y$ is projectively normal.
\end{proposition}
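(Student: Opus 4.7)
The plan is to prove projective normality by verifying that the restriction map
\[
\rho_d \colon H^0(\bP^5, \cO_{\bP^5}(d)) \longrightarrow H^0(Y, \cO_Y(d))
\]
is surjective for every $d \geq 0$. I will split the argument into small degrees ($d \leq 3$) and large degrees ($d \geq 4$); in the first range the cohomology table \eqref{EQ:CohTable} does all the work, while in the second I will combine Kodaira vanishing with an expansion of the Hilbert series \eqref{EQ:HILB}.

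For $d \leq 3$, Proposition~\ref{PROP:AbbiamoCedutoAlLatoOscuro}(b) gives $h^0(\cI_{Y/\bP^5}(d)) = 0$, so $\rho_d$ is injective. The source has dimension $\binom{d+5}{5} = 1,6,21,56$, matching the values of $h^0(\cO_Y(d))$ from \eqref{EQ:CohTable}; hence $\rho_d$ is an isomorphism in this range.

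For $d \geq 4$ I first compute $h^0(\cO_Y(d))$ via Riemann--Roch after killing higher cohomology. By Lemma~\ref{LEM:BasicFactsOnY}(d) we have $K_Y = 3H|_Y + \eta$ with $\eta\in \Pic^0(Y)[2]$, so
\[
dH|_Y - K_Y \;=\; (d-3)H|_Y - \eta
\]
is numerically equivalent to the ample class $(d-3)H|_Y$; by Nakai--Moishezon on the smooth projective surface $Y$ ampleness is a numerical property, so $(d-3)H|_Y - \eta$ is itself ample whenever $d \geq 4$. Kodaira vanishing then yields $h^i(\cO_Y(d)) = 0$ for $i > 0$ and $d \geq 4$, and consequently $h^0(\cO_Y(d)) = \chi(\cO_Y(d)) = P(d) = \tfrac{35}{2}d^2 - \tfrac{105}{2}d + 56$ by Theorem~\ref{THM:invariants}(b). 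Expanding the Hilbert series \eqref{EQ:HILB} using $1/(1-t)^3 = \sum_n \binom{n+2}{2} t^n$ shows that $h_Y^{(d)} = P(d)$ for $d \geq 2$, since no negative-argument binomials appear in the convolution once $d \geq 2$. Because $h_Y^{(d)}$ is by definition the dimension of the image of $\rho_d$, the equality $h_Y^{(d)} = h^0(\cO_Y(d))$ forces $\rho_d$ to be surjective for $d \geq 4$.

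The main delicate point is the ampleness of $(d-3)H|_Y - \eta$: since $\eta$ is genuinely non-trivial by Proposition~\ref{PROP:AbbiamoCedutoAlLatoOscuro}(a), the comparison with $(d-3)H|_Y$ only holds numerically and not at the level of line bundles, so invoking Nakai--Moishezon is essential in order to feed the divisor to Kodaira vanishing. Once that step is secured, the remaining verification is a routine juxtaposition of the Hilbert series with the Euler characteristic.
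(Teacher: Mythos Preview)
Your proof is correct and follows essentially the same strategy as the paper: use the cohomology table \eqref{EQ:CohTable} for $d\leq 3$, Kodaira vanishing for $d\geq 4$, and compare the resulting values of $h^0(\cO_Y(d))$ with the coefficients of the Hilbert series \eqref{EQ:HILB}. The paper packages this as an equality of generating functions, while you do it degree by degree, but the content is identical. Your treatment is in fact slightly more careful on one point: the paper writes ``$dH|_Y\equiv_{\mathrm{num}} K_Y+(d-3)H|_Y$ and $(d-3)H|_Y$ is ample'' before invoking Kodaira, glossing over the non-trivial $\eta$, whereas you explicitly observe that $(d-3)H|_Y-\eta$ is numerically ample and then invoke Nakai--Moishezon to get actual ampleness.
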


\begin{proof}
Recall that if $S_Y=S/I_{Y/\bP^5}$ is the homogeneous coordinate ring of $Y$, we have, for each $d\geq 0$, an exact sequence of vector spaces
$$0\to S_Y^{(d)}\to H^0(\cO_Y(d))\to H^1(\cI_{Y/\bP^5}(d))\to 0.$$
From these sequences one has
$$\sum_{d=0}^{+\infty}h^0(\cO_Y(d))t^d=h_Y(t)+\sum_{d=0}^{+\infty}h^1(\cI_{Y/\bP^5}(d))t^d.$$

Since $dH|_Y\equiv_{num} K_Y+(d-3)H|_Y$ and $(d-3)H|_Y$ is ample for $d\geq 4$, by Kodaira vanishing, one has $H^p(\cO_{Y}(d))=0$ for $d\geq 4$ and $p=1,2$. In particular, using also Table \eqref{EQ:CohTable}, one has
$$\sum_{d=0}^{+\infty}h^0(\cO_Y(d))t^d=\sum_{d=0}^{+\infty}\chi(\cO_Y(d))t^d-(55+15t)=\frac{7(18t^2-21t+8)}{(1-t)^3}-55-15t.$$
One can easily check that the latter series coincides with $h_Y(t)$ (see Equation \eqref{EQ:HILB}) so one can conclude that $h^1(\cI_{Y/\bP^5}(d))=0$ for all $d\geq 0$. This is equivalent to the projective normality of $Y$.
\end{proof}

\end{document}